\documentclass[12pt]{amsart}
\usepackage[margin=1in]{geometry}

%
\usepackage{graphpap}
\usepackage{amsmath}
\usepackage{cases}
\usepackage{amssymb}
\usepackage{setspace}

\usepackage[full]{textcomp}

\usepackage{booktabs}

\usepackage{xcolor}
\usepackage{color}

\usepackage{comment}
\usepackage{breakurl}

\usepackage{mathrsfs}
\usepackage{booktabs}
\usepackage{mathtools}

\usepackage{tikz}
\usepackage{amsthm}                

\setlength{\textwidth}{15cm}

\usepackage{microtype}

\usepackage{hyperref}
\hypersetup{pdfstartview=}

\setcounter{tocdepth}{4}
\setcounter{secnumdepth}{4}

\addtocontents{toc}{\protect\hypersetup{hidelinks}}

\newcommand{\noi}{\noindent}

\newtheorem{thm}{Theorem}[section]

\newtheorem{theorem}[thm]{Theorem}

\newtheorem{proposition}[thm]{Proposition}
\newtheorem{lem}[thm]{Lemma}

\newtheorem{remarque}[thm]{Remark}
\newtheorem{exemple}[thm]{Exemple}
\numberwithin{equation}{section}


\newtheorem{prop}[thm]{Proposition}
\newtheorem{cor}[thm]{Corollary}

\newtheorem{assumption}[thm]{Assumption}
\theoremstyle{remark}
\newtheorem{remark}[thm]{Remark}
\newtheorem{rmk}[thm]{Remark}

\definecolor{darkgreen}{rgb}{0.1,0.7,0.1}
\definecolor{darkred}{rgb}{0.7,0.1,0.1}
\definecolor{darkblue}{rgb}{0,0,0.7}
\addtolength{\marginparwidth}{2.3em}

\colorlet{symbols}{blue!90!black}
\colorlet{testcolor}{green!60!black}

\usetikzlibrary{shapes.misc}
\usetikzlibrary{shapes.symbols}
\usetikzlibrary{snakes}
\usetikzlibrary{decorations}
\usetikzlibrary{decorations.markings}

\tikzset{
	dot/.style={circle,fill=black,inner sep=0pt, minimum size=1mm},
}

\makeatletter
\def\DeclareSymbol#1#2#3{\expandafter\gdef\csname MH@symb@#1\endcsname{\tikz[baseline=#2,scale=0.15,draw=symbols]{#3}}\expandafter\gdef\csname MH@symb@#1s\endcsname{\scalebox{0.5}{\tikz[baseline=#2,scale=0.15,draw=symbols]{#3}}}}
\def\<#1>{\csname MH@symb@#1\endcsname}
\makeatother

\DeclareSymbol{1}{0}{\draw[white] (-.6,0) -- (.6,0); \draw (0,-0.3)  -- (0,1.8) node[dot] {};}

\DeclareMathSymbol{\sminus}{\mathbin}{AMSa}{"39}

\makeatletter 
\newcommand{\cov}{{\operator@font cov}}
\newcommand{\var}{{\operator@font var}}
\newcommand{\corr}{{\operator@font corr}}
\newcommand{\diam}{{\operator@font diam}}
\newcommand{\Av}{{\operator@font Av}}
\newcommand{\trig}{{\operator@font trig}}
\newcommand{\Law}{{\operator@font Law}}
\newcommand{\Enh}{{\operator@font Enh}}
\newcommand{\EEnh}{\overline {\operator@font Enh}}
\makeatother

\newcommand{\eps}{\varepsilon}

\renewcommand{\d}{\partial}

\newcommand{\cC}{\mathcal{C}}
\newcommand{\dD}{\mathcal{D}}

\newcommand{\fF}{\mathcal{F}}

\newcommand{\iI}{\mathcal{I}}

\newcommand{\nN}{\mathcal{N}}
\newcommand{\oO}{\mathcal{O}}

\newcommand{\qQ}{\mathcal{Q}}

\newcommand{\sS}{\mathcal{S}}

\newcommand{\yY}{\mathcal{Y}}
\newcommand{\zZ}{\mathcal{Z}}

\newcommand{\lfl}{\left\lfloor }  
\newcommand{\rfl}{\right\rfloor} 

\newcommand{\E}{\mathbf{E}}

\newcommand{\N}{\mathbf{N}}
\renewcommand{\P}{\mathbf{P}}
\newcommand{\Q}{\mathbf{Q}}
\newcommand{\R}{\mathbf{R}}
\newcommand{\T}{\mathbf{T}}

\newcommand{\Z}{\mathbf{Z}}

\newcommand{\1}{\mathbf{1}}
\newcommand{\one}{\mathbf{1}}

\renewcommand{\k}{\mathbf{k}}

\def\ov{\overline} 
\def\11{{\rm 1~\hspace{-1.4ex}l} }

\def\HH{\mathbb H}

\newcommand{\err}{\text{err}}

\def\scal#1{\langle#1\rangle}

\begin{document}
	\title{Weak universality results for a class of nonlinear wave equations}
	\author{Chenmin Sun}
	\address{CNRS and
		Universit\'e Paris Est Cr\'eteil, UMR 8050 du CNRS}
	\email{chenmin.sun@cnrs.fr}
	
	\author{Nikolay Tzvetkov}
	\address{
		Universit\'e de Cergy-Pontoise, Cergy-Pontoise, F-95000,UMR 8088 du CNRS}
	\email{nikolay.tzvetkov@u-cergy.fr}

	\author{Weijun Xu}
	\address{Beijing International Center for Mathematical Research, Peking University, Beijing, 100871, China}
	\email{weijunxu@bicmr.pku.edu.cn}
	
	\date{\today}
	
	\maketitle

	\begin{abstract} 
		We study the weak universality of the two-dimensional fractional nonlinear wave equation. For a sequence of Hamiltonians of high-degree potentials scaling to the fractional $\Phi_2^4$, we first establish a \emph{sufficient and almost necessary} criteria for the convergence of invariant measures to the fractional $\Phi_2^4$. Then we prove the convergence result for the sequence of associated wave dynamics to the (renormalized) cubic wave equation. Our constraint on the fractional index is independent of the degree of the nonlinearity. This extends the result of Gubinelli-Koch-Oh [Renormalisation of the two-dimensional stochastic nonlinear wave equations, Trans. Amer. Math. Soc. 370 (2018)] to a situation where we do not have a local Cauchy theory with highly supercritical nonlinearities.  
	\end{abstract}


	\setcounter{tocdepth}{1}
	\microtypesetup{protrusion=false}
	\tableofcontents
	\microtypesetup{protrusion=true}

	\section{Introduction}

	
	\subsection{From microscopic to macroscopic wave dynamics}
	
	The aim of this article is to study the macroscopic behaviour of the weakly interacting waves of the type
	\begin{equation} \label{eq:wave_micro}
		\begin{cases}
			&\d_t^2 \tilde{u} + |\nabla|^{2\alpha} \tilde{u} + N^{-\theta} \widetilde{\Pi}_N V'(\tilde{u}) = 0\;, \quad (t,x) \in \R \times \T_N^2\;,\\
			&\tilde{u}(0,\cdot) = \tilde{\phi}\;, \quad (\d_t \tilde{u})(0,\cdot) = \tilde{\psi}\;,
		\end{cases}
	\end{equation}
	where $\T_N^2 = (\R / 2 \pi N \Z)^{2}$ is the two dimensional torus of side length $2 \pi N$, $V$ is an even polynomial satisfying certain structural conditions specified below, and $\widetilde{\Pi}_N$ is the Fourier projection operator on $\T_N^2$ such that
	\begin{equation*}
		\widetilde{\Pi}_N f(x) = \sum_{|k|\leq N}(\mathcal{F}_Nf)(k)\mathrm{e}^{i\frac{k\cdot x}{N}},\quad (\mathcal{F}_Nf)(k)=\frac{1}{(2\pi N)^2}\int_{\T_N^2}f(y)\mathrm{e}^{-i\frac{y\cdot k}{N}}dy.
	\end{equation*}
    The differential operator $|\nabla|^{\gamma}$ acts on functions on torus of side length $N$ as
    \begin{equation*}
    \mathcal{F}_N(|\nabla|^{\gamma} f)(k) :=  \Big| \frac{ k}{N} \Big|^{\gamma} (\mathcal{F}_Nf)(k)\;.
    \end{equation*}
    Here in the microscopic model, we take $\gamma = 2\alpha$ and $L = 2 \pi N$. The initial data $\tilde{\phi}$ and $\tilde{\psi}$ are two random functions given by
	\begin{equation*}
		\tilde{\phi}_N(x) = \frac{1}{2 \pi N^{1-\alpha}} \sum_{|k| \leq N} \frac{g_k}{\sqrt{1+|k|^{2\alpha}} } \; e^{i \frac{k \cdot x}{N}}\;, \qquad \tilde{\psi}_N(x) = \frac{1}{2 \pi N} \sum_{|k| \leq N} h_k \; e^{i \frac{k \cdot x}{N}}\;,
	\end{equation*}
	where $\{g_k\}$ and $\{h_k\}$ are independent standard complex Gaussian with $g_{-k} = \overline{g_k}$ and the same for $h_k$. This type of initial condition is natural since the Gaussian measure it induces is invariant under the perturbed linear evolution above (with the differential operator $|\nabla|^{2\alpha}$ replaced by $\frac{1}{N^{2\alpha}} + |\nabla|^{2\alpha}$ and without nonlinear interaction). 
	
	\begin{rmk}
		The initial data is, very roughly speaking, of the type
		\begin{equation*}
			\frac{1}{2\pi N} \sum_{|k| \leq N} \rho(k / N) g_k(\omega) e^{i \frac{k \cdot x}{N}}
		\end{equation*}
	    for suitable function $\rho: \R^2 \rightarrow \R$. In our case, $\rho(x) = \frac{1}{\scal{x}^{\alpha}}$ for the initial position, and $\rho(x) \equiv 1$ for the initial velocity. Although natural from the invariance of the perturbed linear dynamics, we should also note that our choice is also very restrictive relating to the support of the corresponding Gibbs measure. 
	\end{rmk}
	
	Note that $\tilde{\phi}$ has a stationary Gaussian distribution with $\widetilde{\phi}_N(x) \sim \nN(0, \sigma_N^2)$, where
	\begin{equation} \label{eq:sigma_N}
		\sigma_N^2 = \frac{1}{4 \pi^2 N^{2(1-\alpha)}} \sum_{|k| \leq N} \frac{1}{1+|k|^{2\alpha}} = \underbrace{\frac{1}{4 \pi^2} \int_{|\xi| \leq 1} \frac{1}{|\xi|^{2\alpha}} {\rm d} \xi}_{\sigma^2} \; + \; \oO(N^{-2(1-\alpha)})\;.
	\end{equation}
	Let $\sigma^2$ be defined as above, $\tilde{\mu}$ be the law of $\nN(0,\sigma^2)$, and
	\begin{equation*}
		\scal{V}(z):= \int_{\R} V(z+y) \tilde{\mu}({\rm d}y)
	\end{equation*}
	be the average of $V$ under $\tilde{\mu}$. Our main assumption on $V$ is the criticality and positivity of its averaged version $\scal{V}$. 
	
	\begin{assumption} \label{as:V}
		$V$ is an even polynomial of degree $2m \geq 4$ with the form
		\begin{equation*}
			V(z) = \sum_{j=0}^{2m} a_j z^{2j}\;.
		\end{equation*}
		Furthermore, we assume
		\begin{enumerate}
			\item $z=0$ is a bifurcation point of $\scal{V}$ in the sense that $\scal{V}''(0) = 0$. 			
			\medskip		
			\item $\scal{V}(z) - \scal{V}(0) > 0$ for all $z \neq 0$. 
		\end{enumerate}
	\end{assumption}
	The averaged version $\scal{V}$ has the expression
		\begin{equation*}
			\scal{V}(z) = \sum_{j=0}^{m} \overline{a}_j z^{2j}
		\end{equation*}
		with 
		\begin{equation} \label{eq:potential_coefficients_limit}
			\overline{a}_{j} = \frac{1}{(2j)!} \E \big[ V^{(2j)}\big( 
			{\mathcal N}
			(0,\sigma^2) \big) \big] = \frac{1}{(2j)!} \sum_{k=j}^{m} \frac{(2k)!}{(2k-2j)!!} \cdot a_k \cdot \sigma^{2(k-j)}\;.
		\end{equation}
		Hence, Condition (1) above is equivalent to say that $\overline{a}_1 = 0$. Since the renormalisation term in the wave dynamics and the measures are constant multiples of $\overline{a}_1 N^{2(1-\alpha)} u_N$ and $\overline{a}_{1} N^{2(1-\alpha)} \phi_N^{\diamond 2}$ respectively, Condition (1) guarantees that the divergent parts in various terms are cancelled out automatically, and there is no need to subtract the renormalisation by hand. With Condition (1), Condition (2) is then equivalent to the following positivity condition:
	\begin{align} \label{eq:average_V_positive}
	\sum_{j=2}^{m} \overline{a}_j z^{2(j-2)} > 0\;, \qquad \forall z \in \R\;.
\end{align}

\begin{exemple}
If we fix $a_2>0$, ... ,$a_m>0$, we can find $a_1<0$ such that our assumptions on $V$ are satisfied.  For example 
$$
V(z)=z^6-45 \sigma^2 z^2
$$
satisfies the assumptions. We can also find $V\geq 0$ such that our assumptions are satisfied. 
\end{exemple}
	
	Our aim is to investigate the influence of the microscopic weak non-linear interaction to the macroscopic behaviour of $\tilde{u}$ under the above assumption on $V$. For $\T^2 = (\R / 2 \pi \Z)^2$, define the macroscopic process $u_N$ on $\R \times \T^2$ by
	\begin{equation*}
		u_{N}(t,x) := N^{1-\alpha} \tilde{u}(N^\alpha t, Nx)\;, \quad (t,x) \in \R \times \T^2\;.
	\end{equation*} 
	It satisfies the equation
	\begin{equation} \label{eq:wave_macro}
		\d_t^2 u_N + |\nabla|^{2\alpha} u_N + N^{1+\alpha-\theta} \Pi_N V'(u_N / N^{1-\alpha}) = 0\;, \quad (t,x) \in \R \times \T^2
	\end{equation}
	with initial data
	\begin{equation} \label{eq:initial_data_macro}
		(u_N,\partial_tu_N)(0,x) = (\phi_N(x),\psi_N(x)) = \frac{1}{2\pi} \Big(\sum_{|k| \leq N} \frac{g_k}{\sqrt{1+|k|^{2\alpha}}} e^{i k \cdot x},\;  \sum_{|k| \leq N} h_k e^{ik \cdot x}\Big)\;,
	\end{equation}
where $\Pi_N$ is the Fourier projector on the unit tori:
\begin{align}\label{PiN}  \widehat{\Pi_Nf}(k)=\mathbf{1}_{|k|\leq N}\widehat{f}(k),\quad \widehat{f}(k)=(\mathcal{F}_1f)(k)=\frac{1}{(2\pi)^2}\int_{\T^2}f(y)\mathrm{e}^{-ik\cdot y}dy.
\end{align}
	In order for $u_N$ to converge to a cubic equation, one necessarily sets $\theta = 4 \alpha -2$ and hence $1+\alpha - \theta = 3(1-\alpha)$. 


	\subsection{The macroscopic model}
	Let $\T^2 = (\R / 2 \pi \Z)^{2}$ be the two dimensional torus. For every $N>0$, let $\Pi_N$ be the Fourier projection operator on the unit tori introduced in \eqref{PiN}. For $\alpha \in (\frac{3}{4}, 1)$, let $\mu = \mu^\alpha$ be the probability measure on $\dD'(\T^2)$ (the space of distributions on $\T^2$) with covariance operator $(1+|\nabla|^{2\alpha})^{-1}$, and $\mu'$ be the white noise on $\T^2$. Equivalently, the Gaussian measures $\mu^\alpha$ and $\mu'$ are induced by the random functions
	\begin{equation*}
		\phi = \frac{1}{2 \pi} \sum_{k\in\Z^2} \frac{g_k}{\sqrt{1+|k|^{2\alpha}}} \mathrm{e}^{ik\cdot x}\;, \quad \psi = \frac{1}{2\pi} \sum_{k \in \Z^2} h_k e^{i k \cdot x}
	\end{equation*}
    respectively, where $\{g_k\}_{k \in \Z^2}$ is a collection of centered complex Gaussian random variables such that
	\begin{equation*}
		g_{-k} = \overline{g_{k}}\;, \qquad \E |g_{k}|^2 = 1\;, \qquad \E (g_{k}^2) = 0\;, 
	\end{equation*}
	and otherwise independent, and the same for $\{h_k\}$. Since $\alpha$ will be a fixed parameter throughout the article, we simply write $\mu = \mu^\alpha$. 
			 
	Let $\mu_N:= \mu \circ \Pi_N^{-1}$ and $\mu'_N = \mu' \circ \Pi_N^{-1}$ be the marginals of $\mu$ and $\mu'$ on frequencies up to $N$. Hence, the initial data of the macroscopic wave dynamics \eqref{eq:wave_macro} are distributed according to $\mu_N \otimes \mu'_N$. Let $\widetilde{\sigma}_N^2$ be the variance of $\phi$ under $\mu_N$, which is invariant under translations and hence $\widetilde{\sigma}_N^2$ does not depend on the spatial variable $x$. In fact, a direct computation shows
	\begin{align}\label{eq:sigma_N_tilde}
		\widetilde{\sigma}_N^2:= \E^{\mu} |\Pi_N \phi|^2 = \frac{1}{4 \pi^2} \sum_{k \in \Z^2,|k|\leq N} \frac{1}{1+|k|^{2\alpha}} = \underbrace{(\sigma^2 + \err_N)}_{=: \sigma_N^2} \cdot N^{2(1-\alpha)}\;,
	\end{align}
	where $\sigma_N^2$ and
	\begin{equation} \label{eq:sigma}
		\sigma^2 = \frac{1}{4\pi^2} \int_{|\xi|\leq 1} \frac{1}{|\xi|^{2\alpha}} {\rm d}\xi\;
	\end{equation}
	are as defined in \eqref{eq:sigma_N}, and $\err_N = \oO(N^{-2(1-\alpha)})$ as $N \rightarrow +\infty$. 
			
	Now, let $V$ be an even polynomial satisfying Assumption~\ref{as:V}. For every $N \in \N$, let
	\begin{equation} \label{eq:potential_renormalised}
		V_{N}(\varphi) = N^{4(1-\alpha)} V(\varphi / N^{1-\alpha}) \;,
	\end{equation}
	and we have
	\begin{equation} \label{eq:potential_renormalised_wick}
		V_{N}(\varphi) = \sum_{j=1}^{m} \overline{a}_{j,N} N^{-(2j-4)(1-\alpha)} H_{2j}(\varphi; \widetilde{\sigma}_N^2)\;, 
	\end{equation}
	where $H_{k}(\cdot, \sigma^2)$ is the $k$-th Hermite polynomial with leading coefficient $1$ and variance $\sigma^2$.  The coefficients $\overline{a}_{j,N}$ can be explicitly computed as
	\begin{equation} \label{eq:potential_coefficients_wick}
		\overline{a}_{j,N} = \frac{1}{(2j)!} \E \big[ V^{(2j)}\big( \nN(0,\sigma_N^2) \big) \big]\;.
	\end{equation}
	For every $j$, we have $\overline{a}_{j,N} \rightarrow \overline{a}_j$ as $N \rightarrow +\infty$, where $\overline{a}_j$ are as given in \eqref{eq:potential_coefficients_limit}. Furthermore, the following slightly more delicate relation holds. 
			
\begin{prop}\label{convergeneclinear} 
Assume that $\alpha\in\big(\frac{1}{2},1\big)$. There exists an absolute constant $\lambda_0\in\R$, such that as $N\rightarrow\infty$,
$$ \ov{a}_{1,N}=\ov{a}_1+\lambda_0N^{-2(1-\alpha)}+O(N^{-1})+O(N^{-4(1-\alpha)}).
$$
\end{prop}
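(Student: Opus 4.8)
The plan is to trace the asymptotics of $\ov a_{1,N}$ back to a sharp expansion of the Gaussian variance $\sigma_N^2$, and then push that expansion through the (polynomial) dependence of $\ov a_{1,N}$ on $\sigma_N^2$.

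\medskip
\emph{Step 1: reduction to the variance.} Since $V$ is a polynomial, so is $V''$, and for a centred Gaussian of variance $s$ the quantity
$\tfrac12\E\big[V''(\mathcal{N}(0,s))\big]=\sum_{j=1}^{m}\tfrac{(2j)!}{2\,(2j-2)!!}\,a_j\,s^{\,j-1}=:F(s)$ is a polynomial in $s$ of degree $m-1$. Comparing with \eqref{eq:potential_coefficients_limit} and \eqref{eq:potential_coefficients_wick}, this says precisely $\ov a_1=F(\sigma^2)$ and $\ov a_{1,N}=F(\sigma_N^2)$. Taylor-expanding the polynomial $F$ about $\sigma^2$ gives $\ov a_{1,N}-\ov a_1=F'(\sigma^2)\,(\sigma_N^2-\sigma^2)+O\big((\sigma_N^2-\sigma^2)^2\big)$, the $O$-constant depending only on $V$ and $\alpha$ (finitely many terms, $\sigma_N^2$ bounded). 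Hence it suffices to establish an expansion
\begin{equation*}
	\sigma_N^2-\sigma^2=c\,N^{-2(1-\alpha)}+O(N^{-1})
\end{equation*}
for a constant $c=c(\alpha)$; granting this, $(\sigma_N^2-\sigma^2)^2=O(N^{-4(1-\alpha)})+O(N^{-1})$ and all higher powers are $O(N^{-4(1-\alpha)})$, so the statement follows with $\lambda_0:=F'(\sigma^2)\,c$ (a real number depending on $V$ and $\alpha$ but not on $N$).

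\medskip
\emph{Step 2: sharp expansion of $\sigma_N^2$.} By \eqref{eq:sigma_N_tilde}, $4\pi^2 N^{2(1-\alpha)}\sigma_N^2=\sum_{|k|\le N}(1+|k|^{2\alpha})^{-1}$. I would peel off the $k=0$ term and, for $k\neq 0$, use $(1+|k|^{2\alpha})^{-1}=|k|^{-2\alpha}-|k|^{-2\alpha}(1+|k|^{2\alpha})^{-1}$. Because $4\alpha>2$, the series $C_2:=\sum_{k\in\Z^2\setminus 0}|k|^{-2\alpha}(1+|k|^{2\alpha})^{-1}$ converges, with tail $\sum_{|k|>N}(\,\cdot\,)=O(N^{2-4\alpha})$. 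For the leftover $\sum_{0<|k|\le N}|k|^{-2\alpha}$ I would run Abel summation against the lattice counting function $A(t)=\#\{k\in\Z^2:0<|k|^2\le t\}=\pi t+\Delta(t)$, with the elementary bound $|\Delta(t)|\lesssim\sqrt t$: the leading term is $\pi\int_0^{N^2}t^{-\alpha}\,{\rm d}t=\tfrac{\pi}{1-\alpha}N^{2(1-\alpha)}$; the boundary term at $t=N^2$ is $O(N^{1-2\alpha})$; the boundary term as $t\to 0$ vanishes since $\alpha<1$; and $\alpha\int_0^{N^2}t^{-\alpha-1}\Delta(t)\,{\rm d}t$ equals a finite constant $J$ up to an $O(N^{1-2\alpha})$ tail, the convergence of $\int^{\infty}t^{-\alpha-1}\sqrt t\,{\rm d}t$ being exactly where $\alpha>\tfrac12$ is used. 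Since $\tfrac{\pi}{1-\alpha}=4\pi^2\sigma^2$ by \eqref{eq:sigma}, collecting the three contributions yields
\begin{equation*}
	\sum_{|k|\le N}\frac{1}{1+|k|^{2\alpha}}=4\pi^2\sigma^2\,N^{2(1-\alpha)}+(1+J-C_2)+O(N^{1-2\alpha})+O(N^{2-4\alpha})\;.
\end{equation*}
Dividing by $4\pi^2 N^{2(1-\alpha)}$ and noting that $\alpha>\tfrac12$ makes both $N^{1-2\alpha}/N^{2(1-\alpha)}=N^{-1}$ and $N^{2-4\alpha}/N^{2(1-\alpha)}=N^{-2\alpha}=O(N^{-1})$, one obtains the expansion of Step 1 with $c=(1+J-C_2)/(4\pi^2)$.

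\medskip
\emph{Expected main obstacle.} Step 1 is bookkeeping; the content is the lattice-point asymptotic $\sum_{0<|k|\le N}|k|^{-2\alpha}=\tfrac{\pi}{1-\alpha}N^{2(1-\alpha)}+J+O(N^{1-2\alpha})$ in Step 2. It is a classical-style estimate, but the delicate point is that the error must be genuinely $O(N^{1-2\alpha})$, so that after division by $N^{2(1-\alpha)}$ it becomes $O(N^{-1})$ and is absorbed into the stated remainder; the Abel-summation argument delivers this only because $\int^{\infty}t^{-\alpha-1}\Delta(t)\,{\rm d}t$ converges, which with the trivial circle bound $|\Delta(t)|\lesssim\sqrt t$ requires precisely $\alpha>\tfrac12$ — so no refined Gauss-circle estimate on $\Delta$ is needed. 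A secondary bit of care is warranted with the endpoint conventions in the Abel/Stieltjes manipulation (the point mass at $t=N^2$ when $N\in\N$), but these only perturb terms of size $O(N^{-2\alpha})$, which are harmless.
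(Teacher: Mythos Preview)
Your proof is correct. Step~1 is essentially the same as the paper's argument in Appendix~\ref{appendix:1orderconstant}: the paper Taylor-expands the Gaussian density in $\sigma$ rather than recognising $\ov a_{1,N}$ as a polynomial $F(\sigma_N^2)$, but your formulation is if anything cleaner, since $F$ being a genuine polynomial makes the remainder control trivial.

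Step~2, however, takes a genuinely different route from the paper's Lemma~\ref{asymptotic}. The paper compares the Riemann sum $\sum_{0<|k|\le N}(1+|k|^{2\alpha})^{-1}$ to the integral $\int_{|\xi|\le N}|\xi|^{-2\alpha}\,d\xi$ by tiling the disk with unit cubes $C_k$, controlling $\int_{C_k}\big(\tfrac{1}{1+|k|^{2\alpha}}-\tfrac{1}{|\xi|^{2\alpha}}\big)\,d\xi$ cube by cube, and handling the four quadrants and boundary cubes by hand. You instead split off the convergent correction $\sum_{k\ne 0}|k|^{-2\alpha}(1+|k|^{2\alpha})^{-1}$ and then run Abel summation on $\sum_{0<|k|\le N}|k|^{-2\alpha}$ against the lattice counting function $A(t)=\pi t+\Delta(t)$ with the trivial Gauss--circle bound $|\Delta(t)|\lesssim\sqrt t$. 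Your approach is more systematic and makes the role of the hypothesis $\alpha>\tfrac12$ completely transparent (it is exactly the convergence of $\int^{\infty}t^{-\alpha-1/2}\,dt$), while the paper's cube decomposition is more hands-on but requires some bookkeeping with the quadrant symmetry and the boundary annulus. Both deliver the same error $O(N^{1-2\alpha})$ before the final division by $N^{2(1-\alpha)}$, and hence the same conclusion.
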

\begin{proof}
See Appendix~\ref{appendix:1orderconstant}.
\end{proof}

\subsection{Wave dynamics}

Our first main result concerns the behavior of the macroscopic wave-dynamics as $N\rightarrow\infty$. In this part, we always assume that $V$ verifies Assumption \ref{as:V} and denote $\lambda:= 4\overline{a}_2 > 0$. The theorem is stated as follows. 

\begin{theorem} \label{thm:dynamics}
	Suppose that $\alpha \in (\frac{8}{9}, 1)$. Let $\sigma < \alpha-1$ and suppose that $V$  satisfies Assumption \ref{as:V} with $\lambda:=4\ov{a}_2>0$. Let $u_N$ be the solution of 
	\begin{equation*}
		\d_t^2 u_N + 
		|\nabla|^{2\alpha}
		u_N + \Pi_N V_N' (u_N) = 0,
	\end{equation*}
	with initial data 
	\begin{equation*} \label{donnee_bis}
		(u_N, \d_t u_N)|_{t=0} = \frac{1}{2\pi}\,  
		\sum_{|k| \leq N } 
		\Big(
		\frac{g_k(\omega)}{\sqrt{1+|k|^{\alpha}}} \,\,\mathrm{e}^{i k\cdot x}, \; h_k(\omega) \,\, \mathrm{e}^{i k\cdot x }\Big)\,.
	\end{equation*}
	Then solutions of (with $\lambda_0\in\R$ given in Proposition \ref{convergeneclinear})
	\begin{equation*}
		\d_t^2 v_N + |\nabla|^{2\alpha}  v_N +2\lambda_0v_N+  \lambda\Pi_N (  (v_N)^3-3\widetilde{\sigma}_N^2 v_N ) = 0
	\end{equation*}
	with initial data \eqref{donnee_bis}  converge almost surely in the sense of distribution on $\R\times \T^2$, as $N\rightarrow \infty$ and satisfy 
	\begin{equation*}
		\lim_{N\rightarrow\infty}\|u_N - v_N\|_{\cC([-T,T], H^{\sigma}(\T^2))} = 0,\, \forall\, T>0.
	\end{equation*}
\end{theorem}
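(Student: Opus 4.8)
The plan is to expand both dynamics around their common free evolution and compare the resulting remainders, using two inputs from earlier in the paper. Let $z_N(t)=\cos(t|\nabla|^\alpha)\phi_N+\tfrac{\sin(t|\nabla|^\alpha)}{|\nabla|^\alpha}\psi_N$ solve $\d_t^2z_N+|\nabla|^{2\alpha}z_N=0$ with data \eqref{donnee_bis}, and set $u_N=z_N+\mathfrak u_N$, $v_N=z_N+\mathfrak v_N$. The first input is the uniform-in-$N$ control of the stochastic objects: each Wick power $:z_N^\ell:\,=H_\ell(z_N;\widetilde\sigma_N^2)$ is bounded, almost surely and in every $L^p(\Omega)$, in $\cC([-T,T],H^{\ell(\alpha-1)-\kappa}(\T^2))$ for all $\kappa>0$. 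The second — and the essential one, since there is \emph{no} uniform-in-$N$ local Cauchy theory for the high-degree equation — is global-in-time control of $u_N$ itself: conservation of the renormalised Hamiltonian together with the convergence of the invariant measures established in the first part of the paper gives, on events of probability $\ge 1-\eps$, that $u_N(t)$ stays in a ball of $H^{\alpha-1-\kappa}$ of radius independent of $N$ for all $|t|\le T$, and likewise for all the renormalised polynomials $H_\ell(u_N;\widetilde\sigma_N^2)$. Here Assumption~\ref{as:V} enters decisively: condition~(1), i.e.\ $\ov a_1=0$, removes the divergent linear counterterm, and condition~(2) makes the renormalised potential coercive, so that the Gibbs measure is a genuine probability measure with uniformly controlled tails.

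First I would dispose of the limiting cubic dynamics. The equation for $v_N$ is the frequency-truncated $\alpha$-fractional $\Phi_2^4$ wave equation, and the Gubinelli--Koch--Oh treatment of the two-dimensional stochastic nonlinear wave equation — the probabilistic local theory (Da~Prato--Debussche reduction together with the multilinear/paracontrolled estimates, which for this fractional dispersion close exactly under $\alpha>\tfrac89$) followed by Bourgain's globalisation via the quasi-invariant Gibbs measure attached to the $v_N$-Hamiltonian — applies uniformly in $N$. This yields the asserted almost sure convergence of $v_N$ in $\cC([-T,T],H^\sigma)$ together with uniform-in-$N$ bounds on $\mathfrak v_N$ in the positive-regularity space $\cC_T H^{s}$ used in that analysis, on the events above. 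It then remains to prove $\|u_N-v_N\|_{\cC_T H^\sigma}\to 0$.

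Set $w_N:=u_N-v_N=\mathfrak u_N-\mathfrak v_N$, which has vanishing Cauchy data and solves $\d_t^2w_N+|\nabla|^{2\alpha}w_N=-F_N$ with $F_N=\Pi_N\big[V_N'(u_N)-2\lambda_0v_N-\lambda(v_N^3-3\widetilde\sigma_N^2v_N)\big]$. Using \eqref{eq:potential_renormalised_wick}, $V_N'(u_N)=\sum_{j=1}^m 2j\,\ov{a}_{j,N}\,N^{-(2j-4)(1-\alpha)}H_{2j-1}(u_N;\widetilde\sigma_N^2)$; re-expanding each Hermite polynomial around $z_N$ via the addition formula $H_n(z_N+\mathfrak u_N;\widetilde\sigma_N^2)=\sum_{\ell=0}^n\binom n\ell\,:z_N^\ell:\,\mathfrak u_N^{\,n-\ell}$, I split $F_N$ into three groups. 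Group~(i): the $j=1$ part $2\ov{a}_{1,N}N^{2(1-\alpha)}u_N-2\lambda_0v_N=2\lambda_0w_N+(2\ov{a}_{1,N}N^{2(1-\alpha)}-2\lambda_0)u_N$, where the first piece goes into a Gronwall loop and the prefactor of the second is $o(1)$ by Proposition~\ref{convergeneclinear} while $u_N$ is bounded (up to $N^\eps$) in $H^{\alpha-1-\kappa}$. Group~(ii): the $j=2$ part minus the renormalised cubic of $v_N$, namely $(4\ov{a}_{2,N}-\lambda)H_3(u_N;\widetilde\sigma_N^2)+\lambda\big(H_3(u_N;\widetilde\sigma_N^2)-H_3(v_N;\widetilde\sigma_N^2)\big)$: the first prefactor is $o(1)$ since $\ov{a}_{2,N}\to\ov{a}_2=\lambda/4$, and the second difference, by the addition formula, is a finite sum of terms each carrying exactly one explicit factor $w_N=\mathfrak u_N-\mathfrak v_N$ and otherwise built from $:z_N^2:$, $:z_N:$, $\mathfrak u_N$, $\mathfrak v_N$, hence feeds the Gronwall loop with an $O(1)$ constant by the same multilinear estimates as in the cubic analysis. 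Group~(iii): the high-degree tail $\sum_{j=3}^m 2j\,\ov{a}_{j,N}N^{-(2j-4)(1-\alpha)}\Pi_N H_{2j-1}(u_N;\widetilde\sigma_N^2)$, where the argument is won. Because the data and the equation are truncated at frequency $N$, each summand is supported on frequencies $\le N$, so for every $(j,\ell)$ one estimates $\|\Pi_N(:z_N^\ell:\mathfrak u_N^{\,2j-1-\ell})\|_{H^{\sigma-\alpha}}$ by Bernstein, paying a factor $N^{[(\sigma-\alpha)-(\ell(\alpha-1)-\kappa)]_+}$ times the norm of $:z_N^\ell:\mathfrak u_N^{\,2j-1-\ell}$ at its natural regularity $\ell(\alpha-1)-\kappa$, which is $O(1)$ by the first input and by the (bootstrapped) control of $\mathfrak u_N=\mathfrak v_N+w_N$. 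After multiplying by the coefficient $N^{-(2j-4)(1-\alpha)}$, the net power of $N$ is, in the only nontrivial regime, at most $-(2j-4)(1-\alpha)+(2j-1)(1-\alpha)+(\sigma-\alpha)+\kappa=3(1-\alpha)+(\sigma-\alpha)+\kappa$: the index $j$ has dropped out, because $-(2j-4)+(2j-1)=3$, which is exactly why the constraint on $\alpha$ does not deteriorate with the degree $2m$; and this exponent is strictly negative once $\alpha>\tfrac89$, $\sigma<\alpha-1$ and $\kappa$ is small. Hence group~(iii) tends to $0$ in $\cC_T H^{\sigma-\alpha}$ at a rate uniform in $m$. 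Collecting (i)--(iii), using that $\tfrac{\sin(t|\nabla|^\alpha)}{|\nabla|^\alpha}$ maps $H^{\sigma-\alpha}$ into $H^\sigma$, and writing the Duhamel formula for $w_N$, one gets on the event $\{\|w_N\|_{\cC_{[0,t]}H^\sigma}\le 1\}$ the estimate $\|w_N\|_{\cC_{[0,t]}H^\sigma}\le o_N(1)+C\int_0^t\|w_N\|_{\cC_{[0,s]}H^\sigma}\,ds$ with $C$ independent of $N$; a continuity argument shows the threshold $1$ is never reached for $N$ large, so Gronwall gives $\|w_N\|_{\cC_T H^\sigma}\le o_N(1)e^{CT}\to 0$. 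With the convergence of $v_N$ this proves the statement on every $[-T,T]$, and a Borel--Cantelli argument upgrades subsequential convergence to almost sure convergence.

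The step I expect to be the main obstacle is precisely the uniform-in-$N$, global-in-time a priori control of $u_N$: lacking a uniform local Cauchy theory for the high-degree dynamics, this cannot come from a fixed-point argument and must be drawn from the conservation of the renormalised Hamiltonian and the convergence of the invariant measures, which is where Assumption~\ref{as:V} is used in an essential way; closely related is the bootstrap that upgrades the low-regularity control of $u_N$ to the positive-regularity control of $\mathfrak u_N$ needed to run the multilinear estimates in groups (ii) and (iii). A secondary technical point is to make sure the Gronwall constant generated by group~(ii) is genuinely $O(1)$ and not merely $N^\eps$, which forces one to work on the full-measure events on which the stochastic objects $:z_N^\ell:$ and the remainders actually converge, rather than on events where they are only polynomially bounded in $N$.
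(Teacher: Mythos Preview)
Your overall architecture matches the paper's: recentring $u_N=z_N+\mathfrak u_N$, $v_N=z_N+\mathfrak v_N$, establishing the cubic limit first, and then comparing via a Duhamel/Gronwall loop while drawing global control of $u_N$ from the invariant Gibbs measures. Two points, however, are genuine gaps rather than details to be filled in.

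\medskip

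\textbf{High-degree mixed terms.} Your treatment of group~(iii) asserts that $\|{:}z_N^\ell{:}\,\mathfrak u_N^{\,2j-1-\ell}\|$ is $O(1)$ ``at its natural regularity $\ell(\alpha-1)-\kappa$'', and then runs a Bernstein count that drops $j$. This only works at the endpoint $\ell=2j-1$. For small $\ell$ (in particular $\ell=0$) you would need $\mathfrak u_N^{\,2j-1}$ bounded in some $H^{-\kappa}$ uniformly in $N$; but $\mathfrak u_N$ lives only in $H^{s}$ with $s<4\alpha-3<1$, which in two dimensions does \emph{not} control arbitrarily high powers in any fixed Sobolev (or $L^p$) space. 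So the claimed $O(1)$ is false for large $j$, and the ``$j$ drops out'' power count does not close. (A symptom: your exponent $3(1-\alpha)+(\sigma-\alpha)+\kappa$ is already negative for $\alpha>2/3$, so if the argument worked it would not explain the threshold $\alpha>8/9$.)

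The paper closes this exactly where your sketch breaks: it does not try to bound $\mathfrak u_N^{\,k}$ directly. Instead it extracts, via the Bourgain--Bulut invariance argument, a bound $\|u_N\|_{L_t^{10m}W_x^{-\beta-\eps,\infty}}\le R$ (an $L^\infty_x$--based norm, not merely $H^{\alpha-1-\kappa}$), which by Bernstein gives $\|\mathfrak u_N\|_{L_t^{10m}L_x^\infty}\lesssim N^{\beta+}R$. All but four factors of $\mathfrak u_N$ are placed in $L^\infty$ (each costing $N^{\beta+}$, cancelled by the coefficient $N^{-(2j-4)\beta}$), three factors are placed in $L^\infty_t H^{s_1}_x$, and the remaining factor is placed in the Strichartz space $L_t^{2+\theta_0}L_x^{2+4/\theta_0}$, which is what buys the decisive extra $N^{-\beta}$. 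Strichartz is thus not optional here, and the constraint $\alpha>8/9$ enters precisely because the Strichartz loss $\gamma_{q,r}=\frac{2-\alpha}{2+\theta_0}$ must sit below $s_1\approx 4\alpha-3$.

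\medskip

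\textbf{The Gronwall loop.} Because the a~priori input carries a size $R$ (and an $L^{10m}_t$ rather than $L^\infty_t$ bound), the loop does not close with an $O(1)$ constant on $[0,T]$. The paper iterates on subintervals of length $\tau_0\sim R^{-100m}$, producing a factor $K_0^{T/\tau_0}$; this is absorbed only under the quantitative relation $R\lesssim(\log N)^\theta$ with $\theta$ small, and a bootstrap on $\|\mathfrak u_N\|_{Y^{s_1}}$ (not merely on $\|w_N\|_{H^\sigma}$) is run simultaneously. Your single-shot Gronwall with an $O(1)$ constant, and the remark that one should avoid ``$N^\eps$'' constants, understates what is actually required. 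Also, the global control of $u_N$ is \emph{not} obtained from Hamiltonian conservation (that is used only to make each fixed-$N$ flow global); it comes entirely from the invariance of $\vec\nu_N$ plus the $L^p(\mu)$ bounds on the densities.
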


\begin{remark}
	We have a more detailed convergence statement by decomposing $u_N$ (and also $v_N$) into a random term with low regularity and a smoother contribution. The latter converges in positive Sobolev norms. See Propositions~\ref{th:wave_cubicdetailed} and ~\ref{convergenceN} for precise statements. 
	
	The restriction $\alpha>\frac{8}{9}$ is technical and can hopefully be improved using recently developed methods (\cite{Bringmann1, 2d_NLS_full, random_tensors}). However, this is not in the objective of this work. Instead we emphasize that our range of $\alpha$ is \emph{independent} of the degree $2m$ of the potential $V$. Indeed, the Cauchy problem \eqref{introwave_N} without the negative powers of $N$ in higher nonlinearities in $V$ is highly supercritical\footnote{For large $m$, this is even supercritical with respect to the probabilistic scaling, a notion introduced in \cite{2d_NLS_full, random_tensors}.}. What saves us here is the truncation $\Pi_N$ in frequency space and the negative power of $N$ in front of the high-power nonlinearity. The same situation appears in Hairer-Quastel \cite{KPZ_HQ} for the KPZ equation (though in a different setup where the problem is the singularity of the driving noise instead of the initial data). 
\end{remark}

\begin{remark}
	The theorem still holds true if the sharp cutoff in the truncation is replaced by a smoother cutoff with a sufficiently fast decay smooth function. The constant $\lambda$ in the final statement then will depend on the actual cutoff function. 
\end{remark}

\subsection{The Gibbs measures}

In order to prove Theorem~\ref{thm:dynamics}, we re-write the macroscopic model \eqref{eq:wave_macro} as
\begin{equation} \label{eq:wave_macro_new}
	\d_t^2 u_N + (1 + |\nabla|^{2\alpha}) u_N + \Pi_N \big( V_N'(u_N) - u_N \big) = 0\;,
\end{equation}
still with initial data \eqref{eq:initial_data_macro}. We add a mass term in the linear part in order to control the free evolution of the zero-th Fourier mode, and modified the nonlinear term to compensate the change. In fact, without the mass term, the zero-th mode will grow in time under the linear evolution. Let
\begin{equation*}
	\widetilde{V_N}(\varphi) := V_{N}(\varphi) - \frac{1}{2} \big( \varphi^2 - \widetilde{\sigma}_N^2 \big)\;,
\end{equation*}
and let $\nu_N$ be the probability measure given by
	\begin{equation}\label{eq:nu_N}
		\nu_N({\rm d} \phi) = \frac{1}{\zZ_N} e^{-\int_{\T^2} \widetilde{V_N}(\phi) {\rm d}x} \mu_N({\rm d} \phi)\;.
	\end{equation}
The measure $\nu_N$ is well defined as long as $a_m>0$, and $\nu_N \otimes \mu'_N$ is invariant under the dynamics \eqref{eq:wave_macro_new}. If $\lambda := \overline{a}_2 > 0$, then the measure
	\begin{equation*}
		\nu({\rm d} \phi) = \frac{1}{\zZ} e^{-\lambda \int_{\T^2} \phi^{\diamond 4} {\rm d}x + \frac{1}{2} \int_{\T^2} \phi^{\diamond 2} {\rm d}x} \mu({\rm d} \phi)
	\end{equation*}
is also well-defined, where $\phi^{\diamond k}$ denotes the $k$-th Wick power of $\phi$ with respect to the Gaussian structure induced by $\mu$. $\nu$ is known as the fractional $\phi^4_2$ with exponent $\alpha$. See Section \ref{renormalisation} for the precise definition. 

\begin{rmk}
	Note that the measure $\nu$ has an additional quadratic term on the exponential with the opposite sign compared to the usual fractional $\phi^4_2$. This is because we define the Gaussian measure $\mu$ to have covariance $(1 + |\nabla|^{2\alpha})^{-1}$. Indeed, the measure $\nu$ is the same with the quadratic term removed if the reference Gaussian measure has covariance $|\nabla|^{-2\alpha}$ and $0$-mode being a $\nN(0,1)$ random variable independent of all other modes. 
\end{rmk}

Let $\mu'$ be the white noise measure on $\T^2$, and define the measures $\vec{\mu}$, $\vec{\nu}_N$ and $\vec{\nu}$ by
\begin{equation*}
	\vec{\mu} := \mu \otimes \mu'\;, \qquad \vec{\nu}_N:= \nu_N \otimes \mu'_N\;, \qquad \vec{\nu}:= \nu \otimes \mu'\;.
\end{equation*}
More precisely, writing $\vec{\phi} = (\phi, \phi')$, we have
\begin{equation*}
	\vec{\nu}_N({\rm d} \vec{\phi}) = \nu_N({\rm d} \phi) \mu'_N({\rm d} \phi') = \zZ_N^{-1} e^{- \int_{\T^2} \widetilde{V_N}(\phi) {\rm d}x} \underbrace{\mu_N({\rm d} \phi) \mu'_N({\rm d}\phi')}_{\vec{\mu}_N({\rm d} \vec{\phi})}\;,
\end{equation*}
and
\begin{equation*}
	\vec{\nu}({\rm d} \vec{\phi}) = \nu({\rm d} \phi) \mu'({\rm d} \phi') = \zZ^{-1} e^{- \lambda \int_{\T^2} \phi^{\diamond 4} {\rm d}x} \underbrace{\mu({\rm d} \phi) \mu'({\rm d}\phi')}_{\vec{\mu}({\rm d} \vec{\phi})}\;, 
\end{equation*}
where the values of $\zZ_N$ and $\zZ$ are the same as before. 
The equation \eqref{eq:wave_macro_new} can be written as a Hamiltonian system for $\vec{u}_N := (u_N, \d_t u_N)$ as
\begin{equation} \label{introwave_N}
	\d_t \begin{pmatrix}
		u_N \\ \d_t u_N
	\end{pmatrix}
	=
	\begin{pmatrix}
		0 & 1\\
		-1 & 0
	\end{pmatrix}
	\frac{\d \mathcal{E}_N}{\d (u_N, \d_t u_N)}\;,
\end{equation}
where the Hamiltonian is given by
\begin{equation*}
	\mathcal{E}_N(f,g) = \frac{1}{2} \Big( \scal{|\nabla|^{2\alpha} f, f}_{L^2} + \scal{g, g}_{L^2} \Big) + \int_{\T^2} V_N (\Pi_N f) {\rm d}x\;.
\end{equation*}
For every $N$, the probability measure $\vec{\nu}_N$ is invariant under the above Hamiltonian dynamics. Theorem~\ref{th:main_measure} implies that $\vec{\nu}_N \otimes \mu_N^{\perp} \otimes (\mu'_N)^{\perp}$ converges to $\vec{\nu}$ in the sense that the density with respect to $\vec{\mu}$ converges in $L^p(\vec{\mu})$ for every $p \geq 1$. The measures $\vec{\mu}$ and $\vec{\nu}$ are supported on
\begin{equation*}
	\mathcal{H}^{-(1-\alpha) \sminus}(\T^2) := H^{-(1-\alpha) \sminus}(\T^2) \times H^{-1 \sminus}(\T^2)\;,
\end{equation*}
where
\begin{equation*}
	H^{\gamma \sminus} := \bigcap_{\eps>0} H^{\gamma-\eps}\;.
\end{equation*}
The invariance of $\nu_N \otimes \mu_N'$ under the dynamics \eqref{eq:wave_macro_new} is an essential ingredient in the proof of Theorem~\ref{thm:dynamics}. In addition, convergence of the measures itself may of independent interest. 

\begin{rmk}
	We would like to emphasize that the invariance of $\nu_N \otimes \mu_N'$ under the dynamics \eqref{eq:wave_macro_new} is used in two different ways. The first one is that it gives key a priori bounds for truncated dynamics (for fixed $N$). Second, the convergence of the invariant measures to a limiting measure (as stated in Theorem~\ref{th:main_measure} below) and the invariance of the limiting measure under the limiting dynamics allows us to pass from local to global in time convergence. 
\end{rmk}

\subsection{Convergence of the measures}

We now state our result on the convergence of the Gibbs measures. For convenience, we introduce another measure $\overline{\nu}_N$ by
\begin{equation*}
	\overline{\nu}_N ({\rm d} \phi) := \nu_N \otimes \mu_N^{\perp} = \frac{1}{\zZ_N} e^{-\int_{\T^2} \widetilde{V_N} (\Pi_N \phi) {\rm d}x} \mu ({\rm d}\phi)\;,
\end{equation*}
where the normalisation constant $\zZ_N$ is the same as the one in \eqref{eq:nu_N}. For every $p \geq 1$, define
	\begin{equation*}
		\zZ_{N}^{(p)} := \E^{\mu} \Big[ e^{-p \int_{\T^2} \widetilde{V_N}(\Pi_N \phi) {\rm d}x} \Big]\;.
	\end{equation*}
	Then $\zZ_{N} = \zZ_{N}^{(1)}$. Our first theorem is the following:
	
	\begin{thm} \label{th:main_measure}
		Let $\alpha \in (\frac{3}{4}, 1)$. Suppose that $V$ verifies Assumption \ref{as:V}. 
	 Then for every $p \geq 1$, we have
		\begin{equation*}
			\sup_{N} |\log \zZ_N^{(p)}| < +\infty\;
		\end{equation*}
		Furthermore, $\lambda := \overline{a}_2 > 0$, and
		\begin{equation*}
			\E^{\mu} \bigg| e^{-\int_{\T^2} \widetilde{V_{N}}(\Pi_N \phi) {\rm d}x} - e^{- \lambda \int_{\T^2} \phi^{\diamond 4} {\rm d}x + \frac{1}{2} \int_{\T^2} \phi^{\diamond 2} {\rm d}x} \bigg|^{p} \rightarrow 0\;
		\end{equation*}
		for every $p \geq 1$. Hence, $\overline{\nu}_N$ converges to the fractional $\phi^4_2$ measure $\nu$ in the sense that the densities with respect to $\mu$ converge in $L^{p}(\mu)$. 
	\end{thm}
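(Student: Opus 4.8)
The plan is to split the claim into three ingredients: a uniform exponential moment bound, a convergence in $\mu$-probability, and a soft argument passing from the latter to $L^p(\mu)$-convergence; essentially all the work sits in the first. To set things up, write $Q_N^{(k)}:=\int_{\T^2}H_k(\Pi_N\phi;\widetilde{\sigma}_N^2)\,dx$ for the integrated $k$-th Wick power; by \eqref{eq:potential_renormalised_wick} and the definition of $\widetilde{V_N}$,
\begin{equation*}
\int_{\T^2}\widetilde{V_N}(\Pi_N\phi)\,dx=\overline{a}_{2,N}\,Q_N^{(4)}+\Bigl(\overline{a}_{1,N}N^{2(1-\alpha)}-\tfrac12\Bigr)Q_N^{(2)}+\sum_{j=3}^{m}\overline{a}_{j,N}N^{-(2j-4)(1-\alpha)}\,Q_N^{(2j)}.
\end{equation*}
Each $Q_N^{(k)}$ lies in the $k$-th homogeneous Wiener chaos over $\mu$ and has mean zero, so $\E^\mu\int_{\T^2}\widetilde{V_N}(\Pi_N\phi)\,dx=0$ and Jensen's inequality gives the lower bound $\log\zZ_N^{(p)}\geq0$ for free; the assertion $\lambda=\overline{a}_2>0$ is the $z=0$ case of \eqref{eq:average_V_positive}. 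Thus only two things remain: the upper bound $\sup_N\zZ_N^{(p)}<\infty$ (more generally, the Nelson-type bound below) and the $L^p$-convergence.

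For the convergence I would work from the displayed decomposition. A power-counting estimate in Wiener chaos shows that, because $\alpha>\tfrac34$, the sequences $Q_N^{(2)}$ and $Q_N^{(4)}$ are Cauchy in $L^2(\mu)$ with limits $\int_{\T^2}\phi^{\diamond 2}\,dx$ and $\int_{\T^2}\phi^{\diamond 4}\,dx$, while $\bigl\|\overline{a}_{j,N}N^{-(2j-4)(1-\alpha)}Q_N^{(2j)}\bigr\|_{L^2(\mu)}\to0$ for every $j\geq3$; the crucial point is that the single hypothesis $\alpha>\tfrac34$ makes this last convergence hold simultaneously for all $j$, with no dependence on the degree $2m$ of $V$ — exactly the degree-independence highlighted in the introduction. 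Combined with $\overline{a}_{2,N}\to\lambda$ and Proposition~\ref{convergeneclinear} (which, since $\overline{a}_1=0$, ensures the coefficient $\overline{a}_{1,N}N^{2(1-\alpha)}-\tfrac12$ of $Q_N^{(2)}$ has a finite limit), this shows that $\int_{\T^2}\widetilde{V_N}(\Pi_N\phi)\,dx$ converges in $L^2(\mu)$, and hence $e^{-\int_{\T^2}\widetilde{V_N}(\Pi_N\phi)\,dx}$ converges in $\mu$-probability to the limiting density of the statement. Hypercontractivity upgrades the chaos convergences to every $L^p(\mu)$, and the limiting random variable lies in each $L^p(\mu)$ by the bound of the next paragraph together with Fatou.

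The heart of the matter is the uniform exponential bound $\sup_N\E^\mu\bigl[e^{-p\int_{\T^2}\widetilde{V_N}(\Pi_N\phi)\,dx}\bigr]<\infty$ for every $p\geq1$. This is a renormalisation (Nelson-type) estimate, and it does not reduce to the Wick-power decomposition together with Hölder: the Wiener chaoses of order $\geq4$ carry no exponential moments, so the terms cannot be separated, and it is precisely the cancellation between the coercive top-degree part of $V_N$ and the fluctuating lower-degree Wick terms that keeps the integral bounded below. The quartic Wick contribution $\overline{a}_{2,N}Q_N^{(4)}$, with $\overline{a}_{2,N}\to\lambda>0$, is handled by the now-standard Nelson bound for the two-dimensional $\Phi^4$ measure, which simultaneously absorbs the quadratic term $Q_N^{(2)}$ as a lower-order perturbation. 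For the higher-degree terms one keeps $\widetilde{V_N}$ in its original polynomial form and decomposes $\Pi_N\phi=\phi_{\leq M}+\phi_{(M,N]}$ at an intermediate scale $1\ll M=M(N)\ll N$: since the high block $\phi_{(M,N]}/N^{1-\alpha}$ behaves pointwise like an $O(1)$-variance, rapidly oscillating Gaussian field of variance $\approx\sigma^2$ while $\phi_{\leq M}/N^{1-\alpha}$ is negligible, integrating out the high modes self-averages $V$ into $\scal{V}$, and the positivity of $\scal{V}$ from Assumption~\ref{as:V}(2) — together with $\scal{V}(z)-\scal{V}(0)\gtrsim z^4$ near $0$ and $\gtrsim z^{2m}$ at infinity, with $N$-independent constants — lets the divergent and high-degree contributions be dominated. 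This is the only genuinely hard step; it parallels the arguments of Hairer--Xu and of Hairer--Quastel \cite{KPZ_HQ}, adapted to the fractional Gaussian field, and one could alternatively run a Bou\'e--Dupuis variational representation for $-\log\zZ_N^{(p)}$, the positivity of $\scal{V}$ being the decisive structural input in either approach.

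Finally, the three ingredients combine in the usual way. The bound of the previous paragraph at exponent $p+1$ makes $\{e^{-p\int_{\T^2}\widetilde{V_N}(\Pi_N\phi)\,dx}\}_N$ uniformly integrable, which together with convergence in $\mu$-probability yields convergence in $L^p(\mu)$ for every $p$ — the $L^p$-convergence of the statement — and in particular $\sup_N|\log\zZ_N^{(p)}|<\infty$. Since then $\zZ_N=\zZ_N^{(1)}\to\zZ\in(0,\infty)$, it follows that $\tfrac{d\overline{\nu}_N}{d\mu}=\zZ_N^{-1}e^{-\int_{\T^2}\widetilde{V_N}(\Pi_N\phi)\,dx}$ converges to $\tfrac{d\nu}{d\mu}$ in $L^p(\mu)$ for all $p$, which is the last assertion.
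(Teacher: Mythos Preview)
Your overall architecture --- reduce to a uniform-in-$N$ exponential moment bound plus $L^2(\mu)$-convergence of the exponent, then combine via uniform integrability --- is exactly the paper's. Your chaos-by-chaos $L^2$ convergence of $\int_{\T^2}\widetilde{V_N}(\Pi_N\phi)\,dx$ is correct; in particular the observation that $N^{-(2j-4)(1-\alpha)}\|Q_N^{(2j)}\|_{L^2(\mu)}\to 0$ for every $j\geq 3$ precisely when $\alpha>\tfrac34$ (the rate being $N^{3-4\alpha}$, independent of $j$) matches the paper's power counting.

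For the one hard step, however, the paper does \emph{not} run the Nelson-type high--low decomposition you sketch as your main line. It uses the Bou\'e--Dupuis variational formula --- your ``alternative''. One writes
\[
-\log\zZ_N^{(p)}=\inf_{v}\E\Bigl[\int_{\T^2}p\,\widetilde{V_N}\bigl(W_N+\iI_N(v)\bigr)\,dx+\tfrac12\int_0^1\|v(t)\|_{L^2}^2\,dt\Bigr],
\]
expands $V_N(W_N+U_N)$ in Wick-binomial form, and observes (Proposition~\ref{coercivity}) that the positivity \eqref{eq:average_V_positive} of $\langle V\rangle$ produces a coercive piece $c\bigl(\|U_N\|_{L^4}^4+N^{-(2m-4)(1-\alpha)}\|U_N\|_{L^{2m}}^{2m}+\|U_N\|_{H^\alpha}^2\bigr)$. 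The remaining mixed terms $\int_{\T^2}\yY_{N,\ell}\,U_N^{\ell}$ are then absorbed one by one, for $0\leq\ell\leq 2m-1$, via duality, fractional Leibniz, Gagliardo--Nirenberg interpolation and the Wick-power moment bounds of Lemma~\ref{le:Wick_bd}. The variational route has two concrete advantages here: it yields a clean two-sided bound on $\log\zZ_N^{(p)}$ with no separate tail analysis, and --- more importantly for the paper --- the same formula with a constant drift $v\equiv\theta N^{1-\alpha}$ immediately gives the near-necessity of \eqref{eq:average_V_positive} in Proposition~\ref{pr:measure_counter_eg}.

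Your high--low self-averaging heuristic (``integrating out the high modes turns $V$ into $\langle V\rangle$'') is plausible in spirit but remains a sketch: you have not indicated how to make the replacement of $V$ by $\langle V\rangle$ quantitative, uniformly in $N$ and across all degrees up to $2m$, nor how the resulting error is controlled on the rare event where it is large and negative --- which is precisely what a Nelson-type argument must handle. Since you already identify Bou\'e--Dupuis as a viable route, the cleanest completion of your proposal is simply to promote it from alternative to primary.
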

	
	\begin{rmk}
		The restriction $\alpha>\frac{3}{4}$ is natural in the sense that in this range, one can define the $\phi^4$ measure $\nu$ by an absolutely continuous density with respect to the Gaussian measure $\mu$. The fourth Wick power $\phi^{\diamond 4}$ fails to exist under $\mu$ when $\alpha = \frac{3}{4}$, in which case one expects to end up with a measure (after further renormalisations) that is mutually singular with respect to $\mu$. 
	\end{rmk}

The next proposition says that \eqref{eq:average_V_positive} is actually almost necessary for the main theorem.

\begin{prop} \label{pr:measure_counter_eg}
	If there exists $\theta \in \R$ such that $\sum_{j=1}^{m} \overline{a}_2 \theta^{2(j-2)} < 0$, then there exists $c>0$ such that $\log \zZ_N > c N^{4(1-\alpha)}$ for all $N \in \N$. As a consequence, the densities $\frac{{\rm d} \bar{\nu}_N}{{\rm d} \mu}$ do not converge in $L^1(\mu)$. 
\end{prop}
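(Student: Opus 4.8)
The plan is to bound $\zZ_N$ from below by a Cameron--Martin shift of the Gaussian field $\phi$ by a carefully chosen constant, followed by Jensen's inequality. Under the hypothesis and Condition~(1) of Assumption~\ref{as:V} ($\overline{a}_1=0$) there is a $\theta\neq 0$ with $\sum_{j=1}^{m}\overline{a}_j\theta^{2j}<0$, i.e.\ $\scal{V}(\theta)<\scal{V}(0)$ (if the hypothesis holds only at $\theta=0$, i.e.\ $\overline{a}_2<0$, take a small $\theta\neq 0$, which works by continuity). Set $h_N:=\theta N^{1-\alpha}$, viewed as a constant function on $\T^2$. Then $h_N$ lies in the Cameron--Martin space $H^{\alpha}$ of $\mu$, with $\|h_N\|_{H^{\alpha}}^2=\scal{(1+|\nabla|^{2\alpha})h_N,h_N}_{L^2}=4\pi^2\theta^2N^{2(1-\alpha)}$, and the associated first-chaos functional is $\ell_N(\phi)=\scal{h_N,\phi}_{L^2}=\theta N^{1-\alpha}\int_{\T^2}\phi\,{\rm d}x$, a centered Gaussian of variance $4\pi^2\theta^2N^{2(1-\alpha)}$. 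Since $\Pi_Nh_N=h_N$, the Cameron--Martin formula gives
\begin{equation*}
\zZ_N=\E^{\mu}\Big[\exp\Big(-\int_{\T^2}\widetilde{V_N}(\Pi_N\phi+h_N)\,{\rm d}x-\ell_N(\phi)-\tfrac12\|h_N\|_{H^{\alpha}}^2\Big)\Big].
\end{equation*}

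By Jensen's inequality, $\log\zZ_N$ is at least the $\mu$-expectation of the exponent. Here $\E^{\mu}[\ell_N(\phi)]=0$, and using \eqref{eq:potential_renormalised_wick} (which exhibits $\widetilde{V_N}$ as a linear combination of the Hermite polynomials $H_{2j}(\cdot;\widetilde{\sigma}_N^2)$, $j\geq 1$), the reproducing identity $\E[H_{2j}(\nN(0,\widetilde{\sigma}_N^2)+a;\widetilde{\sigma}_N^2)]=a^{2j}$, and $\widetilde{\sigma}_N^2=\sigma_N^2N^{2(1-\alpha)}$, a direct computation gives, with $P_N(\theta):=\sum_{j=1}^{m}\overline{a}_{j,N}\theta^{2j}$,
\begin{equation*}
\E^{\mu}\Big[\int_{\T^2}\widetilde{V_N}(\Pi_N\phi+h_N)\,{\rm d}x\Big]=4\pi^2\Big(N^{4(1-\alpha)}P_N(\theta)-\tfrac12\theta^2N^{2(1-\alpha)}\Big).
\end{equation*}
The order-$N^{2(1-\alpha)}$ term here is produced precisely by the mass renormalisation $-\tfrac12(\varphi^2-\widetilde{\sigma}_N^2)$ inside $\widetilde{V_N}$, and it exactly cancels $-\tfrac12\|h_N\|_{H^{\alpha}}^2=-2\pi^2\theta^2N^{2(1-\alpha)}$, so $\log\zZ_N\geq-4\pi^2N^{4(1-\alpha)}P_N(\theta)$. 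Since $\overline{a}_{j,N}\to\overline{a}_j$ for each $j$, we have $P_N(\theta)\to\sum_{j=1}^{m}\overline{a}_j\theta^{2j}=\scal{V}(\theta)-\scal{V}(0)<0$; write $\delta:=\scal{V}(0)-\scal{V}(\theta)>0$. Hence $\log\zZ_N>2\pi^2\delta N^{4(1-\alpha)}$ for all large $N$, while for the finitely many small $N$ one has $\zZ_N>1$ (Jensen applied directly to $\zZ_N=\E^{\mu}[e^{-\int\widetilde{V_N}(\Pi_N\phi)}]$, using $\E^{\mu}\big[\int_{\T^2}\widetilde{V_N}(\Pi_N\phi)\,{\rm d}x\big]=0$), so $\log\zZ_N>0$; a small enough $c>0$ then yields $\log\zZ_N>cN^{4(1-\alpha)}$ for every $N$.

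For the final assertion, set $F_N:=e^{-\int_{\T^2}\widetilde{V_N}(\Pi_N\phi)\,{\rm d}x}$, so that $\frac{{\rm d}\bar\nu_N}{{\rm d}\mu}=F_N/\zZ_N$. By \eqref{eq:potential_renormalised_wick}, $\int_{\T^2}\widetilde{V_N}(\Pi_N\phi)\,{\rm d}x$ is a finite linear combination of the Wick energies $\int_{\T^2}(\Pi_N\phi)^{\diamond 2j}\,{\rm d}x$, $1\leq j\leq m$, with coefficients $\overline{a}_{1,N}N^{2(1-\alpha)}-\tfrac12$ ($j=1$), $\overline{a}_{2,N}$ ($j=2$) and $\overline{a}_{j,N}N^{-(2j-4)(1-\alpha)}$ ($j\geq 3$). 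These coefficients are bounded (for $j=1$ by Proposition~\ref{convergeneclinear} together with $\overline{a}_1=0$), the $j=1,2$ Wick energies converge in $L^2(\mu)$ as $N\to\infty$, and the $j\geq 3$ terms tend to $0$ in $L^2(\mu)$ — all of this using $\alpha>\tfrac34$, exactly as in the analysis behind Theorem~\ref{th:main_measure}. Hence $\int_{\T^2}\widetilde{V_N}(\Pi_N\phi)\,{\rm d}x$, and therefore $F_N$, is bounded in probability under $\mu$. Since $\zZ_N\geq e^{cN^{4(1-\alpha)}}\to\infty$, it follows that $F_N/\zZ_N\to 0$ in $\mu$-probability. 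If the densities $\frac{{\rm d}\bar\nu_N}{{\rm d}\mu}$ converged in $L^1(\mu)$, their limit would then be $0$ $\mu$-a.e.\ (an $L^1$ limit is also a limit in probability), contradicting $\E^{\mu}\big[\frac{{\rm d}\bar\nu_N}{{\rm d}\mu}\big]=1$ for every $N$; so they do not converge in $L^1(\mu)$.

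The only real idea is the choice of shift. The trivial bound $\log\zZ_N\geq 0$ is useless, and one must displace $\phi$ by a constant of the precise size $\theta N^{1-\alpha}$ — the scale on which $\Pi_N\phi$ itself fluctuates — so that the resulting Cameron--Martin cost, of order $N^{2(1-\alpha)}$, is entirely absorbed by the mass renormalisation built into $\widetilde{V_N}$, leaving the genuinely divergent $N^{4(1-\alpha)}$ contribution governed by the averaged potential $\scal{V}$. The remaining ingredient for the final assertion — tightness of $\{F_N\}$, i.e.\ $L^2(\mu)$-control of the renormalised Wick energies for $\alpha>\tfrac34$ — is part of the estimates already needed for Theorem~\ref{th:main_measure}.
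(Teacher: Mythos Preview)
Your proof is correct and follows essentially the same strategy as the paper: shift the Gaussian field by the constant $h_N=\theta N^{1-\alpha}$ and use the resulting lower bound on $\log\zZ_N$. The paper phrases this via the Bou\'e--Dupuis variational formula (Proposition~\ref{pr:variational_BD}) with the constant drift $u\equiv\theta N^{1-\alpha}$, while you phrase it via the Cameron--Martin formula followed by Jensen; for a deterministic shift these are the same inequality, and your route is slightly more elementary since it avoids invoking the variational representation. Your observation that the $O(N^{2(1-\alpha)})$ Cameron--Martin cost is cancelled \emph{exactly} by the mass renormalisation in $\widetilde{V_N}$ is a nice refinement --- the paper simply absorbs both into an $O(N^{2(1-\alpha)})$ remainder dominated by the $N^{4(1-\alpha)}$ main term. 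You are also more careful than the paper in two places: you handle the finitely many small $N$ explicitly (the paper's ``for all $N$'' is not quite justified by its asymptotic argument), and for the final non-convergence assertion you actually supply the tightness of $F_N$ via the $L^2(\mu)$ bounds on the Wick energies, whereas the paper asserts $F_N/\zZ_N\to 0$ in probability without spelling this out.
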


\subsection{Comparison with parabolic equations and other dispersive models}

This type of weak universality was first studied by Hairer-Quastel (\cite{KPZ_HQ}) in deriving the KPZ equation from a large class of microscopic growth models. It has later been extended in various directions in the setting of parabolic singular stochastic PDEs (\cite{KPZ_general_nonlinear, phi4_poly, phi4_non_Gaussian, phi4_general_nonlinear, phi4_general_smoothing}). A key feature in this type of this problem is that every term in the expansion of the nonlinearity has the same size --- and hence the constant $\lambda$ of this limiting equation depends on the whole nonlinearity rather than the naive guess of the corresponding power only. As far as we know, our Theorem \ref{thm:dynamics} is the first one for dispersive models fitting in this situation. 

Technically, one difference between dispersive and parabolic equations is the lack of $L^\infty$ based estimates in the dispersive setting. Hence, the heuristic reasoning that negative powers of $N$ balance out high powers of singular objects needs more involved justification with the help of dispersive tools. A second technical difference lies in the globalisation argument. In the parabolic setting, the global-in-time convergence follows from the global well-posedness of the limiting equation and stability. However in the current dispersive setting, even though the limiting equation is globally well-posed, the stability properties are not good enough here, and we need to make an essential use of invariant measure to get global convergence. 

Note that our techniques can be used to extend the weak universality result of  Gubinelli-Koch-Oh for the 2D stochastic nonlinear wave equation to the stochastic nonlinear fractional wave equation with space-time white noise, formally written as 
\begin{equation*}
	\partial_t^2u+|\nabla |^{2\alpha}u+\partial_tu+\lambda u^{\diamond 3}= \xi,\quad (t,x) \in \R^+ \times \T^2
\end{equation*}
when $\alpha>\frac{8}{9}$. The weak universality result of Gubinelli-Koch-Oh  is a consequence of the almost sure global well-posedness for the two-dimensional nonlinear wave equation ($\alpha=1$) with \emph{any order nonlinearity}, while for the fractional wave equation with $\alpha<1$, the situation is radically different.  


\subsection{Notations and conventions}

We fix the parameter $\alpha \in (\frac{8}{9}, 1)$ throughout this article. In the Gibbs measure part, we relax its range to $\alpha \in (\frac{3}{4}, 1)$. For $z \in \R$, we write $\scal{z} := (1 + |z|^{2\alpha})^{\frac{1}{2\alpha}}$, and write
\begin{equation*}
	\scal{\nabla} := (1 + |\nabla|^{2\alpha})^{\frac{1}{2\alpha}}\;.
\end{equation*}
We use the short symbol $\dD := \scal{\nabla}$ throughout this article (and hence $\dD^\alpha = \scal{\nabla}^{\alpha}$). The estimates $X\lesssim Y$ ($X\gtrsim Y$) stand for $X\leq CY$ ($X\geq C'Y$) for some unrelated constants $C,C'>0$. We denote by $X\sim Y$ if $X\lesssim Y$ and $X\gtrsim Y$. We also write $X\lesssim_{\eps,\delta} Y$ to emphasize that the constant depends only on parameters $\eps,\delta$.  

Space-time norms are frequently used in the article. For a Banach space $\mathcal{X}$, an interval $I\subset \R$ and $q\in[1,\infty]$, we denote by $L_t^q\mathcal{X}(I)$ the space $L^q(I;\mathcal{X})$. If there is no risk of confusing about the time interval, we will simply write $L_t^q\mathcal{X}$. Banach spaces $\mathcal{X}$ can be Sobolev or Lebesgue spaces for the spatial variable, such as $H^{s}(\T^d), W^{s,p}(\T^d), L^p(\T^d)$, etc. Furthermore, since no local spatial norm will be used, we write $\mathcal{X}_x$ to stand form $\mathcal{X}(\T^d)$. 

Globally reserved parameters: $\alpha\in(\frac{1}{2},1)$, $\beta:=1-\alpha$, $s_0=4\alpha-3$. The even number $2m\in\N$ stands for the degree of the potential $V(z)$. We may specify more restrictive ranges of them in different contexts.  

For parameters $A,B$, the symbol $A\ll B$ means that $B>CA$ for a very large constant $C$, depending on the context.

\subsection{Organization of the article}

This article is organized as follows. In Section~\ref{sec:prelim}, we give some preliminary lemmas on functional inequalities and stochastic estimates. These will be used throughout the article. In Section~\ref{sec:measure_convergence}, we prove the convergence of the measures $\nu_N$ to $\nu$ under Assumption~\ref{as:V} on $V$, and also give evidence to show that this positivity assumption is also almost necessary for the convergence result. Section~\ref{sec:wave_convergence} is devoted to the proof of Theorem~\ref{thm:dynamics}, convergence of the wave dynamics to the cubic wave equation. The appendices collect detailed proofs of some technical lemmas.

\subsection*{Acknowledgement}
	
The authors thank the Hausdorff Research Institute for Mathematics for the hospitality, since the beginning of this work has been done during the program \emph{Randomness, PDEs and Nonlinear Fluctuations}. C.~Sun and N.~Tzvetkov are supported by the ANR grant ODA (ANR-18-CE40- 0020-01).

\section{Preliminaries} \label{sec:prelim}

\subsection{Functional spaces and nonlinear estimates}\label{functionalspaces} 
	
	Let $\varphi\in C_c^{\infty}(\R^d;[0,1])$ be a radial functions such that supp$(\chi)\subset\{\xi:|\xi|\leq \frac{4}{3} \}$, supp$(\varphi)\subset\{\xi: \frac{3}{4} \leq |\xi|\leq \frac{8}{3} \}$. For $j\geq 0$, define $\varphi_j(\xi)=\varphi(2^{-j}\xi)$. Let $\chi: \R^d\rightarrow [0,1]$ be a radial bump function such that
	$$ \chi(\xi)+\sum_{j\geq 0}\varphi_j(\xi)\equiv 1.
	$$ 
	Define the Fourier multiplier $$\mathbf{P}_{-1}=\mathcal{F}_x^{-1}\chi\mathcal{F}_x,\quad \mathbf{P}_j=\mathcal{F}_x^{-1}\varphi_j\mathcal{F}_x,\; j\geq 0.
	$$
	The Besov space $B_{p,q}^{\gamma}(\T^d)$ with indices $\gamma\in\R, 1\leq p,q\leq\infty$ is defined via the norm
	$$  \|f\|_{B_{q,r}^{\gamma}(\T^d) }:=\big\|2^{j\gamma}\|\mathbf{P}_jf\|_{L^q(\T^d)} \big\|_{l_j^p}.
	$$
	In the main part of the article, we use frequently the convention $\mathcal{C}^{\gamma}:=B_{\infty,\infty}^{\gamma}$. For $\gamma\in\R, 1\leq p\leq\infty$, the fractional Sobolev spaces $W^{\gamma,p}(\T^d)$ is defined via the norm
	$$ \|f\|_{W^{\gamma,p}(\T^d)}:= 
	\|\dD^{\gamma}f\|_{L^p(\T^d)}
	$$
	By Littlewood-Paley's square-function theorem, when $\gamma\geq 0$ and $1<p<\infty$, we hvae
	$$ \|f\|_{W^{\gamma,p}(\T^d)}\sim_{\gamma,p} \big\|\|2^{j\gamma}\mathbf{P}_jf\|_{l_j^2} \big\|_{L^p(\T^d)}.
	$$
	
	\begin{lem} [Fractional Leibniz rule]
		\label{le:fractional_Leibniz}
		Let $p \in (1, +\infty)$ and $p_1, p_2, \widetilde{p}_1, \widetilde{p}_2 > 1$ such that
		\begin{equation*}
			\frac{1}{p_1} + \frac{1}{p_2} = \frac{1}{\widetilde{p}_1} + \frac{1}{\widetilde{p}_2} = \frac{1}{p}. 
		\end{equation*}
		Let $\beta \geq 0$. Then there exists $C>0$ depending on all the above parameters such that
		\begin{equation*}
			\|\langle\nabla\rangle^{\beta}(fg)\|_{L^p} \leq C \big( \|\langle\nabla\rangle^{\beta} f\|_{L^{p_1}} \|g\|_{L^{p_2}} + \|f\|_{L^{\widetilde{p}_1}} \|\langle\nabla\rangle^{\beta} g\|_{L^{\widetilde{p}_2}} \big)
		\end{equation*}
		for all $f,g \in \cC^{\infty}(\T^2)$. 
	\end{lem}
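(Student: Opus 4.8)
The plan is a classical Bony paraproduct argument. The tools are the Littlewood--Paley square function theorem recalled above, the Hardy--Littlewood maximal operator $M$, the Fefferman--Stein vector-valued maximal inequality, and routine kernel bounds for $\mathbf{P}_j$ and for $\langle\nabla\rangle^\beta\mathbf{P}_j$ on $\T^2$. (I will tacitly assume $p_1,p_2,\widetilde{p}_1,\widetilde{p}_2\in(1,\infty)$, the endpoint cases being similar or following by interpolation.)

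First I would decompose $fg = T_g f + T_f g + R(f,g)$, where $T_g f := \sum_j (\mathbf{P}_{\le j-2}g)(\mathbf{P}_j f)$ and $T_f g := \sum_j (\mathbf{P}_{\le j-2}f)(\mathbf{P}_j g)$ are the two paraproducts and $R(f,g):= \sum_{|j-k|\le1}(\mathbf{P}_jf)(\mathbf{P}_kg)$ is the resonant part (all sums over dyadic frequencies $\ge -1$). The finitely many lowest-frequency terms not captured by these formulas, and the output component $\mathbf{P}_{-1}(fg)$, are estimated trivially by $\|fg\|_{L^p}\le\|f\|_{L^{p_1}}\|g\|_{L^{p_2}}\lesssim\|\langle\nabla\rangle^\beta f\|_{L^{p_1}}\|g\|_{L^{p_2}}$, using Hölder and the $L^{p_1}$-boundedness of $\langle\nabla\rangle^{-\beta}$. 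The structural point driving everything is that each dyadic block of $T_g f$ and of $T_f g$ has Fourier support in an annulus $\{|\xi|\sim2^j\}$, whereas each block $(\mathbf{P}_jf)(\mathbf{P}_kg)$ of $R(f,g)$ is only supported in a \emph{ball} $\{|\xi|\lesssim2^j\}$.

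For $T_g f$: by the square function theorem, $\|\langle\nabla\rangle^\beta T_g f\|_{L^p}\sim\big\|\big(\sum_j 2^{2j\beta}|\mathbf{P}_j T_g f|^2\big)^{1/2}\big\|_{L^p}$, and since only $O(1)$ values of $k$ contribute to $\mathbf{P}_j T_g f$, the bound $|\mathbf{P}_j G|\lesssim MG$ together with $2^{j\beta}\sim2^{k\beta}$ dominates the square function pointwise by $\big(\sum_k 2^{2k\beta}\,|M((\mathbf{P}_{\le k-2}g)(\mathbf{P}_k f))|^2\big)^{1/2}$; then Fefferman--Stein removes the maximal functions, $\sup_l|\mathbf{P}_{\le l}g|\lesssim Mg$ lets me pull the low-frequency factor out of the $\ell^2$-sum, and Hölder with $\tfrac1{p_1}+\tfrac1{p_2}=\tfrac1p$ together with $L^{p_2}$-boundedness of $M$ and the square function theorem again give $\|\langle\nabla\rangle^\beta T_g f\|_{L^p}\lesssim\|g\|_{L^{p_2}}\|\langle\nabla\rangle^\beta f\|_{L^{p_1}}$, the first term on the right-hand side. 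The term $T_f g$ is handled the same way, now with $f$ in the role of the low-frequency factor and the exponent pair $(\widetilde{p}_1,\widetilde{p}_2)$, producing the second term $\|f\|_{L^{\widetilde p_1}}\|\langle\nabla\rangle^\beta g\|_{L^{\widetilde p_2}}$.

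The main obstacle is the resonant term $R(f,g)$: because its blocks are ball-supported, the weight $2^{j\beta}$ coming from $\langle\nabla\rangle^\beta$ cannot be attached to the (much lower) output frequency, and with only $\beta\ge0$ there is no geometric decay to exploit. I would dispatch it by duality. For $h\in L^{p'}$ with $\tfrac1p+\tfrac1{p'}=1$, using that $(\mathbf{P}_jf)(\mathbf{P}_kg)$ has spectrum in $\{|\xi|\lesssim2^{j+1}\}$,
\begin{equation*}
	\big\langle \langle\nabla\rangle^\beta R(f,g),\, h\big\rangle = \sum_{j}\ \sum_{|k-j|\le1}\ \big\langle (\mathbf{P}_jf)(\mathbf{P}_kg),\ \mathbf{P}_{\le j+C}\langle\nabla\rangle^\beta h\big\rangle .
\end{equation*}
The decisive estimate is $|\mathbf{P}_{\le j+C}\langle\nabla\rangle^\beta h|\lesssim 2^{j\beta}Mh$, valid because the symbol $2^{-j\beta}\langle\xi\rangle^\beta$ cut off to $\{|\xi|\lesssim2^{j+1}\}$ is, uniformly in $j\ge-1$, the symbol of an operator whose kernel has an integrable radially decreasing majorant; this is the only spot where the precise form $\langle\nabla\rangle=(1+|\nabla|^{2\alpha})^{1/2\alpha}$ intervenes, and it is a routine Mikhlin--Hörmander check plus periodization on $\T^2$. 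Inserting this bound, estimating $|\mathbf{P}_kg|$ by the sum of $|\mathbf{P}_lg|$ over $|l-j|\le1$, and applying Cauchy--Schwarz in $j$ yields, pointwise,
\begin{equation*}
	\sum_{j}\sum_{|k-j|\le1}2^{j\beta}|\mathbf{P}_jf|\,|\mathbf{P}_kg|\ \lesssim\ \Big(\sum_j 2^{2j\beta}|\mathbf{P}_jf|^2\Big)^{1/2}\Big(\sum_j|\mathbf{P}_jg|^2\Big)^{1/2},
\end{equation*}
and this $j$-sum is unconditionally convergent --- which is exactly why dualizing dissolves the ball-support obstruction. Then Hölder in $x$ with $\tfrac1{p_1}+\tfrac1{p_2}+\tfrac1{p'}=1$, the square function theorem, and $L^{p'}$-boundedness of $M$ give $|\langle\langle\nabla\rangle^\beta R(f,g),h\rangle|\lesssim\|\langle\nabla\rangle^\beta f\|_{L^{p_1}}\|g\|_{L^{p_2}}\|h\|_{L^{p'}}$; taking the supremum over $\|h\|_{L^{p'}}\le1$ places $R(f,g)$ into the first term as well. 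Adding the three contributions proves the lemma; besides $R(f,g)$, the only points needing attention are the standard kernel bounds $|\mathbf{P}_jG|\lesssim MG$, $\sup_l|\mathbf{P}_{\le l}f|\lesssim Mf$ and $|\mathbf{P}_{\le j}\langle\nabla\rangle^\beta F|\lesssim2^{j\beta}MF$ on $\T^2$, each proved by periodizing the corresponding kernel on $\R^2$.
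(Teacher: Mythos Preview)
Your paraproduct argument is correct and is the standard modern route to the Kato--Ponce/fractional Leibniz inequality. The paper, however, does not prove the lemma at all: its entire proof reads ``This is \cite[Theorem~1]{fractional_Leibniz}'', i.e.\ a citation to Gulisashvili--Kon. So your approach is not so much \emph{different} as it is an actual proof where the paper gives none.

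A brief comparison: the cited reference obtains the estimate via semigroup/heat-kernel smoothing techniques, whereas you go through the Bony decomposition $fg=T_gf+T_fg+R(f,g)$, handling the two paraproducts by the square-function theorem plus Fefferman--Stein and dispatching the resonant part by duality and the pointwise bound $|\mathbf{P}_{\le j+C}\langle\nabla\rangle^\beta h|\lesssim 2^{j\beta}Mh$. Your argument is self-contained modulo the standard maximal-function machinery, and it makes transparent why only $\beta\ge 0$ is needed (the resonant term is the only place where a ball-supported spectrum forces you to attach the full weight $2^{j\beta}$ to one factor). One small caveat: you restrict to $p_1,p_2,\widetilde p_1,\widetilde p_2\in(1,\infty)$ and defer the endpoint $p_i=\infty$ to ``similar or interpolation''; since the lemma as stated only requires $p_i>1$ with $p\in(1,\infty)$, this is fine, but the endpoint case does require replacing the square-function theorem by the Besov characterization $\|g\|_{L^\infty}\sim\sup_j\|\mathbf{P}_jg\|_{L^\infty}$ in the relevant factor.
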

	\begin{proof}
		This is \cite[Theorem~1]{fractional_Leibniz}. 
	\end{proof}

	\begin{prop} [General Gagliardo-Nirenberg inequality]
		\label{pr:general_Sobolev}
		Let $p \in (1+,\infty)$, $\beta>0$ and $\theta \in [0,1]$. Let $p_1, p_2 > 1$ and $\beta_1, \beta_2 > 0$ be such that
		\begin{equation*}
			\frac{1}{p} = \frac{\theta}{p_1} + \frac{1-\theta}{p_2} \quad \text{and} \quad \beta = \theta \beta_1 + (1-\theta) \beta_2\;. 
		\end{equation*}
		Then we have
		\begin{equation*}
			\|f\|_{W^{\beta,p}} \lesssim \|f\|_{W^{\beta_1,p_1}}^{\theta} \|f\|_{W^{\beta_2,p_2}}^{1-\theta}
		\end{equation*}
		for all $f \in \cC^{\infty}$. The proportionality constant depends on all the above parameters but is independent of $f$. 
	\end{prop}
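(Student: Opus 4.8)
The plan is to deduce the inequality from the Littlewood--Paley square-function characterization of $W^{\gamma,p}$ quoted just above, combined with two successive applications of Hölder's inequality: one over the frequency index and one over the spatial variable. (Alternatively, one could invoke complex interpolation of Bessel-potential spaces on $\T^2$, identifying $W^{\beta,p}$ with $[W^{\beta_2,p_2},W^{\beta_1,p_1}]_{\theta}$ and using the log-convexity of the interpolation norm; but the Littlewood--Paley argument is self-contained given the tools already at hand.) Assume first that $p_1,p_2<\infty$ as well (the endpoint case is commented on at the end). Since $\beta,\beta_1,\beta_2\geq 0$ and $p,p_1,p_2\in(1,\infty)$, all three spaces admit the characterization
$$\|f\|_{W^{\gamma,q}}\sim_{\gamma,q}\Big\|\big(\sum_{j\geq -1}2^{2j\gamma}|\mathbf{P}_jf|^2\big)^{1/2}\Big\|_{L^q},$$
so it suffices to bound the right-hand side for $(\gamma,q)=(\beta,p)$ by the corresponding quantities for $(\beta_1,p_1)$ and $(\beta_2,p_2)$. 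We may assume $\theta\in(0,1)$, the cases $\theta\in\{0,1\}$ being trivial, since the two constraints then force $(\beta,p)=(\beta_2,p_2)$ or $(\beta_1,p_1)$.

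Write $A(x):=\big(\sum_{j}2^{2j\beta_1}|\mathbf{P}_jf(x)|^2\big)^{1/2}$ and $B(x):=\big(\sum_{j}2^{2j\beta_2}|\mathbf{P}_jf(x)|^2\big)^{1/2}$. Using $\beta=\theta\beta_1+(1-\theta)\beta_2$ I would split, for each $j$, $2^{2j\beta}|\mathbf{P}_jf|^2=\big(2^{2j\beta_1}|\mathbf{P}_jf|^2\big)^{\theta}\big(2^{2j\beta_2}|\mathbf{P}_jf|^2\big)^{1-\theta}$, and then apply the elementary inequality $\sum_j a_j^{\theta}b_j^{1-\theta}\le\big(\sum_j a_j\big)^{\theta}\big(\sum_j b_j\big)^{1-\theta}$ (Hölder in $j$ with exponents $1/\theta$ and $1/(1-\theta)$) with $a_j=2^{2j\beta_1}|\mathbf{P}_jf(x)|^2$ and $b_j=2^{2j\beta_2}|\mathbf{P}_jf(x)|^2$. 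This yields, pointwise in $x$,
$$\Big(\sum_{j}2^{2j\beta}|\mathbf{P}_jf(x)|^2\Big)^{1/2}\le A(x)^{\theta}\,B(x)^{1-\theta}.$$

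It then remains to bound $\|A^{\theta}B^{1-\theta}\|_{L^p}$, for which I would use Hölder in $x$ with exponents $r_1=\tfrac{p_1}{\theta p}$ and $r_2=\tfrac{p_2}{(1-\theta)p}$. Here is exactly where the hypothesis on $\tfrac1p$ enters: $\tfrac1p=\tfrac{\theta}{p_1}+\tfrac{1-\theta}{p_2}$ says precisely that $\tfrac1{r_1}+\tfrac1{r_2}=1$, and since both terms on the right are strictly positive we get $\tfrac{\theta}{p_1}<\tfrac1p$, i.e.\ $r_1>1$, and likewise $r_2>1$, so these are genuine conjugate exponents. Hence
$$\|A^{\theta}B^{1-\theta}\|_{L^p}\le\big\|A^{\theta p}\big\|_{L^{r_1}}^{1/p}\big\|B^{(1-\theta)p}\big\|_{L^{r_2}}^{1/p}=\|A\|_{L^{p_1}}^{\theta}\,\|B\|_{L^{p_2}}^{1-\theta},$$
and combining with the square-function characterizations of $\|f\|_{W^{\beta_1,p_1}}$ and $\|f\|_{W^{\beta_2,p_2}}$ finishes the proof; the resulting constant depends on all the listed parameters, as claimed.

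I do not anticipate a genuine obstacle: the argument is a double Hölder inequality dressed up by the square-function theorem, and the two algebraic constraints in the hypothesis are exactly what make the two Hölder steps balance. The only points needing a little care are the inclusion of the low-frequency block $\mathbf{P}_{-1}$ (harmless since $\beta,\beta_1,\beta_2\geq 0$ keep the $j=-1$ weights bounded) and the verification $r_1,r_2>1$ recorded above. If one wants to allow $p_1$ or $p_2$ equal to $\infty$, the square-function characterization is unavailable at that endpoint and one should instead run the complex-interpolation argument.
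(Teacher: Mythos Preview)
Your argument is correct: the double H\"older inequality (first over the dyadic index, then over the spatial variable) combined with the Littlewood--Paley square-function characterization quoted in Section~\ref{functionalspaces} gives exactly the stated bound, and the constraints on $\beta$ and $\tfrac1p$ are precisely what make the two H\"older steps balance.

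The paper does not actually prove the proposition; it simply cites \cite[Theorem~1]{Gagliardo_Nirenberg_general} (Brezis--Mironescu). Your self-contained argument is therefore more informative than what the paper supplies. The Brezis--Mironescu reference treats a more general statement (with various parameter ranges and sharp conditions), whereas your square-function proof is tailored to the particular regime $\beta,\beta_1,\beta_2\ge 0$ and $p,p_1,p_2\in(1,\infty)$ where the Littlewood--Paley characterization is available---which is the only regime used later in the paper. One minor remark: the hypothesis in the proposition reads $\beta_1,\beta_2>0$, but your proof (and indeed the applications in Section~\ref{sec:measure_convergence}) work equally well with $\beta_2=0$, since the square-function characterization is valid for $\gamma\ge 0$.
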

	\begin{proof}
		This is the content of \cite[Theorem~1]{Gagliardo_Nirenberg_general}. 
	\end{proof}
	
	\subsection{Strichartz estimate}
	Consider the fractional wave equation on $\R^d$, with $0<\alpha<1$:
	\begin{align}\label{linearwave1} 
		\partial_t^2u+(\dD^{\alpha})^2 u=F.
	\end{align} 
	We say that $(q,r)$ is admissible, if
	$$ \frac{2}{q}\leq d\big(\frac{1}{2}-\frac{1}{r}\big), \quad (q,r,d)\neq (2,\infty,2),
	$$
	and $(q,r)$ is sharp admissible if the equality holds. 
	Denote by
	$$ \gamma_{q,r}:=d\big(\frac{1}{2}-\frac{1}{r}\big)-\frac{\alpha}{q}.$$
	We have the following Strichartz estimate:
	\begin{proposition}[\cite{DD}]\label{Strichartz} 
		Assume that $\alpha\in(0,1)$. Let $(q,r)$ be a sharp admissible pair. For any solution $u$ of \eqref{linearwave1}, we have
		\begin{align}\label{Strichartz1} 
			\|u\|_{L_t^qL_x^r([0,T]\times\T^d)}\leq C_{q,r}\|(u,\partial_tu)|_{t=0}\|_{H^{\gamma_{q,r}}(\T^d)}+C_{q,r}\|F\|_{L_t^1H_x^{\gamma_{q,r}-\alpha}([0,T]\times\T^d)},
		\end{align} 
		where the constant $C_{q,r}$ is independent of $T>0$.
	\end{proposition}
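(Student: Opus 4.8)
The estimate is quoted from \cite{DD}; the plan I would follow to establish it is to combine the fixed-frequency dispersive bound for the half-wave propagator with the abstract Keel--Tao mechanism, and then transfer from $\R^d$ to the torus. First I would reduce everything to the homogeneous propagators $W_{\pm}(t):=e^{\pm it\dD^{\alpha}}$. Solving \eqref{linearwave1} by Duhamel gives $u(t)=\cos(t\dD^{\alpha})u(0)+\dD^{-\alpha}\sin(t\dD^{\alpha})\partial_t u(0)+\int_0^t\dD^{-\alpha}\sin((t-s)\dD^{\alpha})F(s)\,ds$, so it suffices to prove the homogeneous bound $\|W_{\pm}(t)f\|_{L_t^qL_x^r}\lesssim\|f\|_{H^{\gamma_{q,r}}}$. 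The initial-data contribution is then immediate, and the forcing contribution follows by Minkowski's integral inequality, $\|\int_0^tW_{\pm}(t-s)\dD^{-\alpha}g(s)\,ds\|_{L_t^qL_x^r}\le\int\|W_{\pm}(\cdot-s)\dD^{-\alpha}g(s)\|_{L_t^qL_x^r}\,ds\lesssim\int\|g(s)\|_{H^{\gamma_{q,r}-\alpha}}\,ds$; this is exactly why the forcing enters with the $L_t^1$ norm (rather than a dual Strichartz norm), why the factor $\dD^{-\alpha}$ in Duhamel produces the loss of $\alpha$ derivatives, and why no Christ--Kiselev step --- hence no extra loss --- is needed once the homogeneous estimate is in hand.

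For the homogeneous estimate I would work first on $\R^d$: Littlewood--Paley decompose, rescale each dyadic block to unit frequency, and establish the dispersive decay $\|W_{\pm}(t)\mathbf{P}_0 f\|_{L_x^{\infty}(\R^d)}\lesssim(1+|t|)^{-d/2}\|f\|_{L_x^1}$ by stationary phase. The key analytic input is that, away from $\xi=0$ (where the frequency truncation localizes us), the phase $|\xi|^{\alpha}$ has Hessian with eigenvalues $\alpha(\alpha-1)|\xi|^{\alpha-2}$ in the radial direction and $\alpha|\xi|^{\alpha-2}$ in the remaining $d-1$ directions, which is nondegenerate precisely because $\alpha\in(0,1)$ --- in contrast with the genuinely degenerate light cone at $\alpha=1$ --- so one obtains the full decay rate $d/2$. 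Interpolating with the trivial $L^2\to L^2$ bound and inserting the $|t|^{-d/2}$ decay into the Keel--Tao theorem gives $\|W_{\pm}(t)\mathbf{P}_0 f\|_{L_t^qL_x^r}\lesssim\|f\|_{L^2}$ for every sharp admissible $(q,r)$ (the $(2,\infty,2)$ exclusion being exactly the forbidden Keel--Tao endpoint when $d=2$); undoing the rescaling $(t,x)\mapsto(2^{j\alpha}t,2^jx)$ turns the frequency-$2^j$ block into the weight $2^{j\gamma_{q,r}}$, and summing in $j$ via the square-function theorem yields $\|W_{\pm}(t)f\|_{L_t^qL_x^r}\lesssim\|f\|_{H^{\gamma_{q,r}}}$. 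Since this is global in time on $\R^d$, the constant does not depend on the time interval.

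To conclude on $\T^d$ I would invoke a transference principle: on a single frequency-$2^j$ block the rescaling takes a unit time interval to an interval of length $2^{j\alpha}$ on a torus of side $2^j$, on which the periodic propagator differs from its Euclidean counterpart only by acceptable tails (Poisson summation / semiclassical localization), so the $\R^d$ Strichartz bound transfers block-by-block with no loss, and the square function reassembles the full inequality. The step I expect to be the main obstacle is exactly this transference, together with the bookkeeping of uniformity in $T$: because for $\alpha<1$ the fractional wave has no finite speed of propagation, the comparison with the Euclidean flow must be carried out with some care, and the patching of time windows has to be organized so as not to degrade the constant --- this is the content of the cited estimate \cite{DD}, from which Proposition~\ref{Strichartz} can simply be quoted.
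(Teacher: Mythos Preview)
Your outline is essentially correct and lands on the same mechanism as the paper --- frequency-localized dispersive decay $|t|^{-d/2}$ from the nondegenerate Hessian of the phase, fed into the $TT^*$ machinery, then reassembled by Littlewood--Paley --- but two points of your route differ from the paper's actual argument.

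First, the paper does \emph{not} prove the estimate on $\R^d$ and then transfer. It works directly on $\T^d$: Poisson summation writes the torus kernel of $e^{\pm it\dD^{\alpha}}\mathbf{P}_j$ as a sum over lattice shifts $m\in\Z^d$ of the Euclidean oscillatory integral, and one shows that only the finitely many terms with $|m|\le 2$ can carry a stationary point, while for $|m|\ge 2$ the phase gradient is bounded below by $\gtrsim 2^{j(1-\alpha)}|m|$, so those terms decay rapidly and sum. This gives the dispersive bound $\|e^{\pm it\dD^{\alpha}}\mathbf{P}_j\|_{L^1\to L^\infty}\lesssim 2^{jd(1-\alpha/2)}|t|^{-d/2}$ on the torus itself, and the rest is $TT^*$ plus Hardy--Littlewood--Sobolev at each dyadic scale, followed by the square-function summation. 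Your ``transference principle'' is therefore replaced by a single Poisson-summation computation; this is cleaner than the scheme you sketch (rescaling each block to a large torus and comparing with the Euclidean flow), and it sidesteps the issue you flag about finite speed of propagation. Incidentally, you are right that the fractional operator is nonlocal and there is no genuine finite propagation speed; the relevant fact is rather that the group velocity $|\nabla_\xi\sqrt{1+|\xi|^{2\alpha}}|\sim\alpha|\xi|^{\alpha-1}$ is \emph{bounded} for $\alpha<1$, which is exactly what makes the off-center Poisson terms non-stationary.

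Second, for the inhomogeneous term you use Minkowski in $t$ against the $L_t^1$ forcing norm, which is indeed sufficient and slightly simpler; the paper instead invokes the Christ--Kiselev lemma after proving the untruncated bilinear estimate $\|\int_{\R}e^{\pm i(t-t')\dD^{\alpha}}G(t')dt'\|_{L_t^qL_x^r}\lesssim\|G\|_{L_t^1H^{\gamma_{q,r}}}$. Both routes are valid here since $q>1$; yours avoids an extra lemma, while the paper's has the advantage of also yielding the full range of dual Strichartz norms on the forcing if one ever needs them.
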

	Note that when $d=2$,  if $(q,r)$ is sharp admissible, then
	$\gamma_{q,r}=\frac{2-\alpha}{q}.
	$
	We will only make use of the Strichartz space $L_t^qL_x^{r}$ for $q$ slightly greater than $2$ in this article. Due to the finite propagation speed for the linear wave when $\alpha<1$, the Strichartz estimate is the same as 
	that in $\R^d$, which follows from a standard stationary phase analysis and a $TT^*$ argument.	In order to be self-contained, we include a proof of Proposition \ref{Strichartz1} in the appendix.
	
	\subsection{Renormalisation and the white-noise functional}\label{renormalisation}
	
	First we recall that the Hermite polynomials $H_k(x;\sigma)$ can be defined via the generating function
	$$ F(t,x;\sigma)=e^{tx-\frac{1}{2}\sigma t^2}=\sum_{k=0}^{\infty}\frac{t^k}{k!}H_k(x;\sigma).
	$$
	It follows that
	\begin{align}\label{Hermite-formula}
		H_k(x;\sigma)=\sum_{j=0}^{\big\lfloor\frac{k}{2} \big\rfloor}\binom{k}{2j}(2j-1)!! (-\sigma)^j x^{k-2j}.
	\end{align}
	When $\sigma=1$, we denote by $H_k(x)=H_k(x;1)$. The relation of $H_k(x,\sigma)$ and $H_k(x)$ is given by
	$$ H_k(x;\sigma)=\sigma^{\frac{k}{2}}H_k\Big(\frac{x}{\sqrt{\sigma}}\Big).
	$$
	Taking derivatives of the generating function, one deduces easily that
	$$ \partial_x^jH_k(x;\sigma)=\frac{k!}{(k-j)!}H_{k-j}(x;\sigma).
	$$
	Furthermore, by the multiplicative property of the generating function: $$F(t,x+y;\sigma_1+\sigma_2)=F(t,x;\sigma_1)\cdot F(t,x;\sigma_2),$$ we have the binomial expansion
	\begin{align}\label{additive}
		H_k(x+y;\sigma_1+\sigma_2)=\sum_{l=0}^k\binom{k}{l}H_l(x;\sigma_1)H_{k-l}(y;\sigma_2).
	\end{align}
	Hermite polynomials can be used to define the Wick-ordered product for real-valued Gaussian random variables. Let $z$ be a real-valued Gaussian random variable generated by $(\widetilde{g}_k)_{k\in\N}$ with $\nu$. Then we define its Wick product as
	\begin{align}\label{Wick}
		z^{\diamond k}:=H_k(z;\nu).
	\end{align}
	From \eqref{additive}, we have for any function $w$,
	$$ H_k(z+w;\nu)=\sum_{l=0}^k H_l(z;\nu)\cdot w^{k-l}.
	$$
	When $w$ represents a deterministic function, sometimes we will also use $(z+w)^{\diamond k}$ to represent $H_k(z+w;\nu)$.
	For independent real-valued Gaussian random variables $z_1, z_2$ generated by $(\widetilde{g}_k)_{k\in\N}$ with variance $\nu_1,\nu_2$, with respectively, we have the binomial expansion:
	\begin{align}\label{binomHermite} 
	& (z_1+z_2)^{\diamond k}:=H_k(z_1+z_2;\nu_1+\nu_2)\notag \\=&\sum_{l=0}^k\binom{k}{l}H_l(z_1;\nu_1)H_{k-l}(z_2;\nu_2)=\sum_{l=0}^k\binom{k}{l}z_1^{\diamond l}\cdot z_2^{\diamond (k-l)}.
	\end{align}

	In order to estimate the regularity of wick-products, it is convenient to use the white-noise functional calculus. Let 
	$$ \xi^{\omega}(x)=\sum_{n\in\Z^d}\widetilde{g}_n(\omega)e^{inx}
	$$ 
	be the real-valued white noise distribution on $\T^d$,
	where $(\widetilde{g}_n)_{n\in\Z}$ is a sequence of complex-valued independent $\mathcal{N}_{\mathbb{C}}(0;1)$ Gaussian random variables on a given probability space $(\Omega,\mathcal{F},\mathbb{P})$, conditioned to $\ov{\widetilde{g}}_{n}=\widetilde{g}_{-n},\forall n\in\Z^d$. We define the white-noise functional  $$W_{(\cdot)}: L^2(\T^d)\rightarrow L^2(\Omega,\mathcal{F},\mathbb{P})$$
	by
	$$ f\mapsto W_f(\omega)=(f,\xi^{\omega})_{L^2(\T^d)}:=\sum_{n\in\Z^d}\widehat{f}(n)g_n(\omega).
	$$ 
	Note that for any $f,h\in L^2(\T^d)$, we have
	$$ \mathbb{E}[W_fW_h]=(f,h)_{L^2(\T^d)}.
	$$
Moreover, for any real-valued functions $f,h\in L^2(\T^d)$ with $\|f\|_{L^2}=\|h\|_{L^2}=1$, 
	\begin{align}\label{esperance-scalaire}
		\mathbb{E}[H_k(W_f)H_m(W_h)]=\delta_{km} k! [( f,h)_{L^2}]^k.
	\end{align} 
	We refer \cite{OhTho} for a proof. To represent the Wick-product as white noise functional, we denote
	$$ \eta_{N}(x,y):=\frac{1}{\widetilde{\sigma}_N}\sum_{|k|\leq N}\frac{1}{\sqrt{1+|k|^{2\alpha}} }\mathrm{e}^{ik\cdot(x+y)}.
	$$
	Then for
	$$ \phi_N^{\omega}(x):=\sum_{|k|\leq N}\frac{g_k(\omega)}{\sqrt{1+|k|^{2\alpha}}}\mathrm{e}^{ik\cdot x},
	$$
	we have 
	\begin{align}\label{whitenoiserepre}
	\phi_N(x)=\widetilde{\sigma}_NW_{\eta_N(x,\cdot)},\quad	 
	\phi_N^{\diamond l}(x)=H_l(\phi_N(x);\widetilde{\sigma}_N^2)=\widetilde{\sigma}_N^lH_l\big(W_{\eta_N(x,\cdot)}\big)
	.
	\end{align}

	Next we recall the Wiener chaos estimate. Let $(h_n)_{n\in\N}$ be a sequence of independent standard Gaussian random variables on a probability space $(\Omega,\mathcal{F},\mathbb{P})$. Given $k\in\N$ (including the $0$), we define the space of homogeneous Wiener chaos of degree $k$, $\mathcal{H}_k$, to be the closure in $L^2(\Omega,\mathbb{P})$ of polynomials $\prod_{n=1}^{\infty}H_{k_n}(g_n)$, where $\sum_{n=1}k_n=k$. Then we have the Ito-Wiener decomposition
	$$ L^2(\Omega,\mathbb{P})=\bigoplus_{k=0}^{\infty}\mathcal{H}_k.
	$$
	By the hypercontractivity, we have the following Wiener chaos estimate:
	\begin{prop}\label{Wienerchaos}
		Assume that $X\in \bigoplus_{j\leq k}\mathcal{H}_j$, then for any finite $p\geq 2$,
		$$ \|X\|_{L^p(\Omega)}\leq (p-1)^{\frac{k}{2}}\|X\|_{L^2(\Omega)}.
		$$ 
	\end{prop}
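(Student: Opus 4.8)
The plan is to derive the estimate from Nelson's hypercontractivity of the Ornstein--Uhlenbeck semigroup. Let $(T_\rho)_{\rho\in[0,1]}$ denote the Mehler semigroup acting on $F\in L^2(\Omega,\mathbb{P})$, viewed as a (Borel) function of the Gaussian family $(h_n)_{n}$, by
$$ (T_\rho F)\big((h_n)_n\big) = \int F\big((\rho h_n + \sqrt{1-\rho^2}\,y_n)_n\big)\, \gamma({\rm d}y)\;, $$
where $\gamma$ is the law of an independent copy $(y_n)_n$ of $(h_n)_n$. The key structural fact is that each homogeneous chaos $\mathcal{H}_j$ is an eigenspace, $T_\rho|_{\mathcal{H}_j} = \rho^{j}\,\mathrm{Id}$; this follows from the one-variable identity $T_\rho H_j = \rho^j H_j$ (read off the generating function $e^{tx-t^2/2}=\sum_j \tfrac{t^j}{j!}H_j(x)$ after integrating out the $y$-variable) together with tensorisation over coordinates. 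Nelson's theorem then states that $T_\rho$ is a contraction from $L^2(\Omega,\mathbb{P})$ to $L^p(\Omega,\mathbb{P})$ whenever $\rho^2(p-1)\le 1$. This hypercontractive bound is the only nontrivial input; if a self-contained proof is desired, one reduces to the one-dimensional standard Gaussian case — obtained e.g.\ by differentiating $\rho\mapsto \|T_\rho f\|_{L^{p(\rho)}}$ along a curve $p(\rho)$ with $p(\rho)-1=(p-1)\rho^{-2}$ and invoking the Gaussian logarithmic Sobolev inequality — and then tensorises over finitely many coordinates and passes to the limit by a martingale-convergence argument in $L^2(\Omega,\mathbb{P})$.

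Granting this, fix a finite $p\ge 2$ and set $\rho := (p-1)^{-1/2}\in(0,1]$. Write the Ito--Wiener decomposition $X = \sum_{j=0}^{k} X_j$ with $X_j\in\mathcal{H}_j$, and define
$$ Y := \sum_{j=0}^{k} \rho^{-j} X_j \in \bigoplus_{j\le k}\mathcal{H}_j\;. $$
By the eigenvalue property, $T_\rho Y = \sum_{j=0}^{k}\rho^{-j}\rho^{j}X_j = X$, so Nelson's inequality with this choice of $\rho$ gives $\|X\|_{L^p(\Omega)} = \|T_\rho Y\|_{L^p(\Omega)} \le \|Y\|_{L^2(\Omega)}$. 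Since the decomposition $L^2(\Omega,\mathbb{P})=\bigoplus_j \mathcal{H}_j$ is orthogonal,
$$ \|Y\|_{L^2(\Omega)}^2 = \sum_{j=0}^{k}\rho^{-2j}\|X_j\|_{L^2(\Omega)}^2 \le \rho^{-2k}\sum_{j=0}^{k}\|X_j\|_{L^2(\Omega)}^2 = \rho^{-2k}\|X\|_{L^2(\Omega)}^2 = (p-1)^{k}\|X\|_{L^2(\Omega)}^2\;, $$
using $\rho^{-2}=p-1$ in the last equality. Combining the last two displays yields $\|X\|_{L^p(\Omega)}\le (p-1)^{k/2}\|X\|_{L^2(\Omega)}$, which is the claim.

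The only real obstacle is the hypercontractivity estimate itself; the rest is bookkeeping with the chaos decomposition (orthogonality in $L^2$, the eigenvalue action of $T_\rho$, and the choice of the "inflating" multiplier $\rho^{-j}$). In the context of this article we simply cite Nelson's theorem, so the proof above is essentially complete as stated.
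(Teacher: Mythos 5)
Your proposal is correct and follows exactly the route the paper gestures at: the paper states the estimate as a consequence of hypercontractivity without giving a proof, and your argument is the standard derivation from Nelson's theorem via the Mehler semigroup eigenvalue property $T_\rho|_{\mathcal{H}_j}=\rho^j\,\mathrm{Id}$ and $L^2$-orthogonality of the chaos decomposition. The bookkeeping (choice $\rho=(p-1)^{-1/2}$, the inflating multiplier $\rho^{-j}$, and the bound $\rho^{-2j}\le\rho^{-2k}$ for $j\le k$) is all in order.
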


	
	\section{Convergence of the Gibbs measure}
	\label{sec:measure_convergence}

	\subsection{A variational formula for the partition function}
	
	The main strategy to prove Theorem~\ref{th:main_measure} and Proposition~\ref{pr:measure_counter_eg} is the recently developed variational approach to QFT (\cite{variational_QFT}). We first give a variational formula for $- \log \zZ_N$. We adapt the setting in \cite{quasi_inv_wave_3d}. 
	
	Let $\{B_{k}(\cdot)\}_{k \in \Z^2}$ be a collection of standard Brownian motions on the probability space $(\Omega, \fF, \P)$ such that $B_{k} = \overline{B_{-k}}$ and otherwise independent. Let
	\begin{equation*}
		X(t) = \sum_{k \in \Z^2} B_{k}(t) \mathrm{e}_k\;, 
	\end{equation*}
	which is the cylindrical Brownian motion on $L^2(\T^2)$ adapted to the filtration $(\fF_t)$ generated by $\{B_k\}$. 
	
	For every $N$, let $\sS_N$ be the operator such that
	\begin{equation} \label{eq:S_N}
		\widehat{\sS_N f}(k) = \frac{\widehat{f}(k)}{\scal{k}^{\alpha}} \cdot \one_{|k| \leq N}\;. 
	\end{equation}
	Let $W_N(t) := \sS_N X(t)$, and for every $N$, define the measure $\qQ_N$ by
	\begin{equation*}
		\frac{{\rm d}\Q_N}{{\rm d}\P} := \frac{1}{\zZ_N} e^{-\int_{\T^2} \widetilde{V_N}(W_N(1)){\rm d}x}\;. 
	\end{equation*}
	Here, the integration variable in $x$ is from $W_{N}(1) = W_{N}(1,\cdot)$. For $t=1$, we also simply write $W_N$ for $W_{N}(1)$. Then
	\begin{equation*}
		\Law_{\P}\big( W_N(1) \big) = \mu\;, 
	\end{equation*}
	and the normalisation constant $\zZ_N$ is the same as above. 
	
	By the martingale representation theorem, there exists an adapted $L^2$ process $u$ such that
	\begin{equation} \label{eq:RN_adapted_process}
		\frac{1}{\zZ_N} e^{-\int_{\T^2} \widetilde{V_N}(W_N) {\rm d}x} = \frac{{\rm d}\Q_N}{{\rm d}\P} = e^{\int_{0}^{1} \scal{u(t), {\rm d}X(t)} - \frac{1}{2} \int_{0}^{1} \|u(t)\|_{L^2}^2 {\rm d}t}\;. 
	\end{equation}
	Re-arranging the terms and taking logarithm, we get
	\begin{equation*}
		- \log \zZ_N = \int_{\T^2} \widetilde{V_{N}}(W_N) {\rm d}x + \int_{0}^{1} \scal{u(t), {\rm d}X(t)} - \frac{1}{2} \int_{0}^{1} \|u(t)\|_{L^2}^{2} {\rm d}t\;, 
	\end{equation*}
	where we recall the notation $W_N = W_N(1)$. Now, for the above $u$, define
	\begin{equation*}
		\widetilde{X}(t) := X(t) - \int_{0}^{t} u(s) {\rm d}s\;. 
	\end{equation*}
	Then by Girsanov theorem, $\widetilde{X}$ is a $\Q_N$ Brownian motion. Writing
	\begin{equation} \label{eq:I_N}
		\widetilde{W}_N(t):= \sS_N \widetilde{X}(t)\;, \quad \iI_N(v) = \sS_N \int_{0}^{1} v(s) {\rm d}s\;, 
	\end{equation}
	we get
	\begin{equation*}
		- \log \zZ_N = \int_{\T^2} V_{N} \big( \widetilde{W}_N + \iI_N(u) \big) {\rm d}x + \int_{0}^{1} \scal{u(t), {\rm d}\widetilde{X}(t)} + \frac{1}{2} \int_{0}^{1} \|u(t)\|_{L^2}^2 {\rm d}t\;. 
	\end{equation*}
	Note that the second term on the right hand side above is a martingale under $\Q_N$, and hence vanishes under $\E^{\Q_N}$. We have thus arrived at the following proposition. 
	
	\begin{prop}
		Let $u$ be the adapted $L^2$ process in \eqref{eq:RN_adapted_process}. Then we have the identity
		\begin{equation*}
			- \log \zZ_N = \E^{\Q_N} \Big[ \int_{\T^2} V_N \big( \widetilde{W}_N + \iI_N(u) \big) {\rm d}x + \frac{1}{2} \int_{0}^{1} \|u(t)\|_{L^2}^2 {\rm d}t \Big]\;, 
		\end{equation*}
		where $\widetilde{W}_N = \widetilde{W}_N(1) = \Pi_N \widetilde{X}(1)$, and $\Law_{\qQ_N} \big( \widetilde{W}_N \big) = \Law_{\mu}(\phi_N)$. 
	\end{prop}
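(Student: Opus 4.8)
The plan is to obtain the formula simply by taking the $\Q_N$-expectation of the almost sure identity already assembled in the lines just above the statement, the one substantive point being that the Itô integral appearing there has zero $\Q_N$-mean. First I would recall the pieces in place: the martingale representation theorem applied to the strictly positive density martingale $M_t := \E^\P[\tfrac{d\Q_N}{d\P}\mid\fF_t]$ yields an adapted $L^2(\T^2)$-valued process $u$ with $M_t=\exp\!\big(\int_0^t\langle u(s),dX(s)\rangle-\tfrac12\int_0^t\|u(s)\|_{L^2}^2\,ds\big)$, hence \eqref{eq:RN_adapted_process}; Girsanov makes $\widetilde X(t)=X(t)-\int_0^t u(s)\,ds$ a cylindrical $\Q_N$-Brownian motion; and since $W_N=\sS_N X(1)=\sS_N\widetilde X(1)+\sS_N\!\int_0^1 u(s)\,ds=\widetilde W_N+\iI_N(u)$, substituting into $\log M_1$ and rearranging gives the $\Q_N$-a.s.\ identity
\[
-\log\zZ_N=\int_{\T^2}\widetilde{V_N}\big(\widetilde W_N+\iI_N(u)\big)\,dx+\int_0^1\langle u(t),d\widetilde X(t)\rangle+\tfrac12\int_0^1\|u(t)\|_{L^2}^2\,dt .
\]
The left side being deterministic, it remains only to see that the stochastic integral is a genuine $\Q_N$-martingale, so that it disappears under $\E^{\Q_N}$.

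The main obstacle, as I see it, is the a priori bound $\E^{\Q_N}\!\int_0^1\|u(t)\|_{L^2}^2\,dt<\infty$, which upgrades the stochastic integral from a local martingale to a true $L^2(\Q_N)$-martingale of zero mean. I would get it from the relative-entropy identity $\E^{\Q_N}[\log M_1]=\tfrac12\,\E^{\Q_N}\!\int_0^1\|u(t)\|_{L^2}^2\,dt$, established by the by-now-standard localization at the times $\tau_n:=\inf\{t:\int_0^t\|u(s)\|_{L^2}^2\,ds\geq n\}\wedge1$ (the stopped Itô integral being a true martingale that drops out) followed by a monotone/dominated passage to the limit, as in \cite{variational_QFT,quasi_inv_wave_3d}. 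The only input needed to run that passage — and hence to close the whole bound — is that $\log M_1\in L^1(\Q_N)$, and this follows from the fact that, for each fixed $N$, the renormalised polynomial $\widetilde{V_N}$ has degree $2m$ with positive leading coefficient (which is exactly $a_m>0$, the same condition that makes $\nu_N$ well defined) and is therefore bounded below; consequently $\log M_1=-\int_{\T^2}\widetilde{V_N}(W_N)\,dx-\log\zZ_N$ is bounded above by a constant, while $\E^{\Q_N}[(\log M_1)^-]=\E^{\P}[M_1(\log M_1)^-]\leq e^{-1}$, so $\log M_1\in L^1(\Q_N)$ and (with $0<\zZ_N<\infty$) the energy is finite.

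Granting the energy bound, the stochastic integral in the display has quadratic variation $\int_0^1\|u(s)\|_{L^2}^2\,ds$ of finite expectation, hence is a square-integrable $\Q_N$-martingale with $\E^{\Q_N}\int_0^1\langle u,d\widetilde X\rangle=0$; applying $\E^{\Q_N}$ to the display then produces the asserted identity, the replacement of $\widetilde{V_N}$ by $V_N$ being just the bookkeeping of the quadratic correction $-\tfrac12(\varphi^2-\widetilde\sigma_N^2)$ built into $\widetilde{V_N}$. For the distributional claim I would argue that under $\Q_N$ the process $\widetilde X$ is a cylindrical Brownian motion, so $\widetilde X(1)$ is spatial white noise on $\T^2$ and $\widetilde W_N=\sS_N\widetilde X(1)$ is the centered Gaussian field with Fourier covariance $\langle k\rangle^{-2\alpha}\mathbf{1}_{|k|\leq N}=(1+|k|^{2\alpha})^{-1}\mathbf{1}_{|k|\leq N}$; this is precisely the law of $\phi_N=\Pi_N\phi$ under $\mu$, i.e.\ $\Law_{\Q_N}(\widetilde W_N)=\Law_\mu(\phi_N)$.
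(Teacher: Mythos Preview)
Your approach is the same as the paper's: take $\E^{\Q_N}$ of the almost-sure identity and use that the It\^o integral against $d\widetilde X$ has zero mean. You go further than the paper by actually justifying that this integral is a \emph{true} martingale (via the relative-entropy energy bound $\E^{\Q_N}\!\int_0^1\|u\|_{L^2}^2\,dt<\infty$), whereas the paper simply asserts ``the second term on the right hand side above is a martingale under $\Q_N$'' without addressing the local-versus-true distinction. Your argument for this is correct and standard.

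One point to flag: your ``bookkeeping'' explanation for why $\widetilde{V_N}$ becomes $V_N$ in the displayed identity does not work. The difference $\widetilde{V_N}-V_N=-\tfrac12(\varphi^2-\widetilde\sigma_N^2)$, evaluated at $\widetilde W_N+\iI_N(u)=W_N$ and integrated, has $\Q_N$-expectation $-\tfrac12\big(\E^{\Q_N}\|W_N\|_{L^2}^2-4\pi^2\widetilde\sigma_N^2\big)$, and this is not zero because the law of $W_N$ under $\Q_N$ is $\nu_N$, not $\mu_N$. In fact the appearance of $V_N$ (rather than $\widetilde{V_N}$) in the line just before the proposition and in the proposition itself is a typo in the paper: the only manipulation performed between the two displays is the substitution $W_N=\widetilde W_N+\iI_N(u)$ and $dX=d\widetilde X+u\,dt$, which cannot change the potential. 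The correct statement carries $\widetilde{V_N}$, consistent with the variational formula in Proposition~\ref{pr:variational_BD} that follows. So drop the bookkeeping sentence and simply note the typo.
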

	
	The above representation is sufficient for us to prove Theorem~\ref{th:main_measure}. But it will be convenient for us to be able to change the ``drift" $u$ freely while keeping the underlying probability space unchanged. For this reason, we use the following deeper variational formula. 
	
	\begin{prop} [\cite{variational_BP_formula, variational_Ustunel}]
		\label{pr:variational_BD}
		We have
		\begin{equation} \label{eq:variational}
			- \log \zZ_N = \inf_{v \in \HH_a} \E^{\P} \Big[ \int_{\T^2} \widetilde{V_{N}}(W_N + \mathcal{I}_N(v)) {\rm d}x + \frac{1}{2} \int_{0}^{1} \|v(t)\|_{L^2}^2 {\rm d}t \Big]\;, 
		\end{equation}
		where the infimum is taken over all predictable processes in $L^2$ with respect to the filtration generated by $X$. 
	\end{prop}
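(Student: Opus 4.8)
This identity is the Bou\'e--Dupuis variational formula, in the form of \cite{variational_BP_formula, variational_Ustunel}, specialised to the functional
\begin{equation*}
	F(X) := \int_{\T^2} \widetilde{V_N}\big(W_N(1)\big)\, {\rm d}x\;, \qquad W_N(1) = \sS_N X(1)\;,
\end{equation*}
so that $\E^{\P}\big[e^{-F(X)}\big] = \zZ_N$ and the left-hand side of \eqref{eq:variational} is $-\log \E^{\P}[e^{-F}]$. The plan is to check that $F$ meets the hypotheses of the cited formula and then to invoke it; the point of the operators in \eqref{eq:S_N} and \eqref{eq:I_N} is precisely that the Cameron--Martin shift $X \mapsto X + \int_0^{\cdot} v(s)\,{\rm d}s$ sends $W_N(1)$ to $W_N + \iI_N(v)$, so a drift $v$ enters exactly as on the right-hand side of \eqref{eq:variational}.

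First I would record two structural facts that place $F$ in the scope of the formula. Since $V$ is even of degree $2m \geq 4$ with $a_m > 0$, the polynomial $\varphi \mapsto V_N(\varphi) - \tfrac12 \varphi^2$ has even degree $2m$ with positive leading coefficient, so $\widetilde{V_N}$ is bounded below on $\R$ and $F \geq -C_N |\T^2|$ for some $C_N$ depending on $N$; in particular $\E^{\P}[e^{-F}] = \zZ_N < \infty$. Secondly, $W_N(1) = \sS_N X(1)$ depends only on the Fourier modes $|k| \leq N$, so $F$ is a continuous (indeed polynomial) function of the finitely many terminal values $\{B_k(1)\}_{|k| \leq N}$. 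Thus for each fixed $N$ we are genuinely in the finite-dimensional situation in which the Bou\'e--Dupuis formula is classical, and the integrability requirements in its general statement (lower semiboundedness of $F$ together with an exponential moment of $-F$, or finiteness of $F$ along admissible shifts) are elementary here, by the lower bound above and the fact that $F$ grows at most polynomially in Gaussian variables.

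Granting this, the two inequalities follow by the standard argument. For ``$\leq$'' I would fix an admissible drift $v \in \HH_a$, let $\Q^v$ be the law under which $X^v := X - \int_0^{\cdot} v(s)\,{\rm d}s$ is a Brownian motion, and combine Girsanov's theorem with the identity $\int_0^1 \langle v, {\rm d}X \rangle = \int_0^1 \langle v, {\rm d}X^v \rangle + \int_0^1 \|v\|_{L^2}^2\,{\rm d}t$ to write $-\log \zZ_N = -\log \E^{\Q^v}\big[ \exp\big( -F(X) - \int_0^1 \langle v, {\rm d}X^v\rangle - \tfrac12 \int_0^1 \|v\|_{L^2}^2\,{\rm d}t \big) \big]$; applying Jensen's inequality to the convex map $-\log$, dropping the stochastic integral (which has zero mean under $\Q^v$), and using that $(X^v, v)$ under $\Q^v$ has the same law as $(X, v)$ under $\P$, one identifies the resulting upper bound with the bracket on the right of \eqref{eq:variational}. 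Since $\iI_N(v) = \sS_N \int_0^1 v(s)\,{\rm d}s$ is supported on $|k| \leq N$, one may moreover restrict to drifts with $v = \Pi_N v$ (the high-frequency part only increasing the energy term), which also makes the minimisation genuinely finite-dimensional. For ``$\geq$'' I would take $v = u$, the drift from the martingale representation \eqref{eq:RN_adapted_process}: by the computation in the lines following \eqref{eq:RN_adapted_process} the bracket on the right of \eqref{eq:variational} then equals $-\log \zZ_N$ exactly, so the infimum is attained and equals $-\log \zZ_N$.

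The only genuine technical point is confirming that $F$ satisfies the precise integrability hypotheses of whichever version of the formula one cites. For a fixed $N$ this is routine from the lower bound on $\widetilde{V_N}$ together with the hypercontractivity of Gaussian chaos; crucially, no uniform-in-$N$ estimate enters here, the uniform bounds on $\zZ_N^{(p)}$ being the content of Theorem~\ref{th:main_measure} and \emph{consequences} of \eqref{eq:variational} rather than inputs to it.
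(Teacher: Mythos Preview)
The paper does not give its own proof of this proposition: it is stated with the citation \cite{variational_BP_formula, variational_Ustunel} and used as a black box, the text immediately preceding it saying only that this ``deeper variational formula'' allows one to vary the drift freely under the fixed measure $\P$. So there is nothing in the paper to compare your argument against beyond the derivation of the $\E^{\Q_N}$-identity just before it.

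Your sketch is the standard Bou\'e--Dupuis outline and is in the right spirit, but two steps are stated imprecisely. In the ``$\leq$'' direction you assert that $(X^v,v)$ under $\Q^v$ has the same law as $(X,v)$ under $\P$. This is not literally correct: $v$ is adapted to the filtration of $X = X^v + \int_0^{\cdot} v$, not to $X^v$, so under $\Q^v$ one only gets that $(X^v,v)$ has the law of $(X,v')$ under $\P$ for a \emph{different} adapted drift $v'$ (obtained by solving $X = X^v + \int_0^\cdot v(X)$ for $X$ as a functional of $X^v$ and composing). The cleaner route, which avoids this, is to stay under $\P$: with $Y = X + \int_0^\cdot v$ and $\Q$ the Girsanov measure making $Y$ a Brownian motion, the Donsker--Varadhan inequality $-\log \E^{\Q}[e^{-F(Y)}] \le \E^{\P}[F(Y)] + H(\P|\Q)$ together with $H(\P|\Q) = \tfrac12 \E^{\P}\!\int_0^1 \|v\|_{L^2}^2$ gives the bound directly. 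The same issue recurs in your ``$\geq$'' step: the computation after \eqref{eq:RN_adapted_process} produces $-\log \zZ_N = \E^{\Q_N}[\,\cdots\,]$, not $\E^{\P}[\,\cdots\,]$, and passing back to $\P$ again requires expressing $u$ as an adapted functional of the $\Q_N$-Brownian motion $\widetilde{X}$. None of this breaks your argument (the map $v\mapsto v'$ is a bijection on adapted drifts, so the infimum is unaffected), but the law-identity statements as written are false and should be replaced by this change-of-drift reasoning, which is precisely what the cited references carry out.
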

	
	Before we get into the proof of the main theorem, we first give a preliminary lemma controlling $\|\iI_N(v)\|_{H^\alpha}$ by the space-time $L^2$-norm of $v$. 
	
	\begin{lem} \label{le:Cameron_Martin}
		There exists $C>0$ such that
		\begin{equation*}
			\sup_{N} \|\iI_N(f)\|_{H^{\alpha}}^{2} \leq C \int_{0}^{1} \|f(t)\|_{L^2}^2 {\rm d}t
		\end{equation*}
		for all $f \in L^{2}\big( [0,1]; L^2(\T^2) \big)$. 
	\end{lem}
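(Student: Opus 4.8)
The plan is to recognize $\iI_N(f) = \sS_N \int_0^1 f(s)\,{\rm d}s$ as the composition of a bounded Fourier multiplier with a time-integration operator, and to estimate each factor separately. First I would write $g := \int_0^1 f(s)\,{\rm d}s \in L^2(\T^2)$ and observe, by Minkowski's integral inequality in $L^2(\T^2)$ followed by Cauchy--Schwarz in $t$, that
\begin{equation*}
\|g\|_{L^2(\T^2)} = \Big\| \int_0^1 f(s)\,{\rm d}s \Big\|_{L^2(\T^2)} \leq \int_0^1 \|f(s)\|_{L^2(\T^2)}\,{\rm d}s \leq \Big( \int_0^1 \|f(s)\|_{L^2(\T^2)}^2\,{\rm d}s \Big)^{1/2}.
\end{equation*}
This reduces the claim to the $N$-uniform bound $\|\sS_N g\|_{H^\alpha} \leq C \|g\|_{L^2}$.

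Next I would verify that bound directly on the Fourier side. By \eqref{eq:S_N}, $\widehat{\sS_N g}(k) = \scal{k}^{-\alpha}\,\one_{|k|\leq N}\,\widehat g(k)$, so that
\begin{equation*}
\|\sS_N g\|_{H^\alpha}^2 = \sum_{k \in \Z^2} \scal{k}^{2\alpha} |\widehat{\sS_N g}(k)|^2 = \sum_{|k| \leq N} \scal{k}^{2\alpha} \cdot \scal{k}^{-2\alpha} |\widehat g(k)|^2 = \sum_{|k| \leq N} |\widehat g(k)|^2 \leq \|g\|_{L^2}^2,
\end{equation*}
using $\scal{k} = (1 + |k|^{2\alpha})^{1/(2\alpha)}$ so that $\scal{k}^{2\alpha} \scal{k}^{-2\alpha} = 1$ exactly (here $H^\alpha$ is taken with the weight $\scal{\nabla}^{2\alpha} = 1 + |\nabla|^{2\alpha}$, consistent with the conventions in the excerpt). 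Combining the two displays gives the lemma with $C = 1$ (any value $C \geq 1$ works, and the constant is manifestly independent of $N$ since the multiplier $\scal{k}^{-\alpha}\one_{|k|\leq N}$ is uniformly bounded by $\scal{k}^{-\alpha}$).

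There is essentially no obstacle here: the only points requiring the slightest care are the choice of which Sobolev norm convention is in force — one must use $\|h\|_{H^\alpha}^2 = \sum_k \scal{k}^{2\alpha}|\widehat h(k)|^2$ with the inhomogeneous weight $\scal{k}$, matching the definition $\dD = \scal{\nabla}$ fixed in the Notations — and the interchange of the $\sS_N$ operator with the time integral, which is legitimate because $\sS_N$ is a bounded linear operator on $L^2(\T^2)$ and $s \mapsto f(s)$ is Bochner integrable. For $f$ merely in $L^2([0,1];L^2(\T^2))$ one first argues for, say, continuous adapted $f$ (or simple functions) where the Bochner integral is unambiguous, then passes to the general case by density, both sides of the inequality being continuous in the $L^2_tL^2_x$ norm.
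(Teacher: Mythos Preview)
Your proof is correct and follows essentially the same approach as the paper. The only organisational difference is that the paper applies Cauchy--Schwarz in time mode-by-mode to each Fourier coefficient $\widehat{f}(t,k)$ and then sums in $k$, whereas you first bound $\|g\|_{L^2}$ via Minkowski and Cauchy--Schwarz and then apply the Fourier multiplier estimate; the two computations are equivalent and yield the same constant.
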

	\begin{proof}
		By definition of $\iI_N$, we have
		\begin{equation*}
			\widehat{\iI_N f}(k) = \mathbf{1}_{|k|\leq N}\frac{1}{\scal{k}^{\alpha}} \int_{0}^{1} \widehat{f}(t,k) {\rm d}t\;, 
		\end{equation*}
		and hence
		\begin{equation*}
			|\widehat{\iI_N f}(k)|^{2} \leq \frac{1}{\scal{k}^{2\alpha}} \Big| \int_{0}^{1} \widehat{f}(t,k) {\rm d}t \Big|^{2} \leq \scal{k}^{-2\alpha} \int_{0}^{1} |\widehat{f}(t,k)|^{2} {\rm d}t\;. 
		\end{equation*}
		As a consequence, we have
		\begin{equation*}
			\|\iI_N f\|_{H^\alpha}^{2} = \sum_{k} \scal{k}^{2\alpha} |\widehat{\iI_N f}(k)|^{2} \leq \sum_{k} \int_{0}^{1} |\widehat{f}(t,k)|^{2} {\rm d}t = C \int_{0}^{1} \|f(t)\|_{L^2}^2 {\rm d}t\;. 
		\end{equation*}
		The proof is complete. 
	\end{proof}

	\subsection{Necessity of the positivity condition -- proof of Proposition~\ref{pr:measure_counter_eg}}
	\label{sec:necessity_positivity}
	
	Suppose $V$ and $\rho$ are such that
	\begin{equation*}
		\sum_{j=2}^{m} \bar{a}_{j} \theta^{2j-4} < 0
	\end{equation*}
	for some $\theta \in \R$. By continuity, we can assume $\theta \neq 0$. Let $u = \theta N^{1-\alpha}$ which is certainly adapted. Write $U_N := \iI_N(u) \equiv \theta N^{1-\alpha}$. By Proposition~\ref{pr:variational_BD}, we have
	\begin{equation*}
		- \log \zZ_N \leq \E^{\P} \Big[ \int_{\T^2} \widetilde{V_{N}}(W_N + U_N) {\rm d}x + \frac{1}{2} \int_{0}^{1} \|u(t)\|_{L^2}^2 {\rm d}t \Big]\;. 
	\end{equation*}
	We will show that for the above drift $u$, the right hand side above is smaller than $-c N^{4(1-\alpha)}$ for some $c>0$. 
	
	For the term $\widetilde{V_{N}}(W_N + U_N)$, we have
	\begin{equation*}
		\widetilde{V_{N}}(W_N + U_N) = \sum_{j=2}^{m} \overline{a}_{j,N} N^{-(2j-4)(1-\alpha)} \; (W_N + U_N)^{\diamond (2j)} - \frac{1}{2} (W_N + U_N)^{\diamond 2} \;. 
	\end{equation*}
	Expanding the Wick product for each $j$ and re-organising the sum according to the power of $U_N$, we get
	\begin{equation} \label{eq:expansion}
		\widetilde{V_{N}}(W_N + U_N) = \sum_{\ell=0}^{2m} \sum_{j=2 \vee \frac{\ell}{2}}^{m} \overline{a}_{j,N}
		\begin{psmallmatrix}
			2j \\ \ell
		\end{psmallmatrix}
		N^{-(2j-4)(1-\alpha)} \; W_{N}^{\diamond (2j-\ell)} U_N^{\ell} - \frac{1}{2} \big( W_N^{\diamond 2} + 2 W_N U_N + U_N^2\big) \;, 
	\end{equation}
	where $U_{N}^{\ell}$ is the $\ell$-th power of $U_N = \iI_N(u)$, and $\diamond$ denotes the Wick product of $W_N$ with respect to its own Gaussian structure. 
	
	Note that for the terms in the above sum, the pointwise expectation $\E^{\P}$ is non-zero only when $\ell = 2j$. So for this drift $u$, we have
	\begin{equation*}
		\begin{split}
			\E \int_{\T^2} \widetilde{V_N} (W_N + \theta N^{1-\alpha}) {\rm d}x &= 4 \pi^2 \sum_{j=2}^{m} \overline{a}_{j,N} N^{-(2j-4)(1-\alpha)} (\theta N^{1-\alpha})^{2j} + \oO(N^{2(1-\alpha)})\\
			&< -c N^{4(1-\alpha)}
		\end{split}
	\end{equation*}
	for some $c>0$ (since $\theta \neq 0$). For the other term, we have
	\begin{equation*}
		\int_{0}^{1} \|u(t)\|_{L^2}^{2} {\rm d}t = C N^{2(1-\alpha)}\;. 
	\end{equation*}
	Hence, by the variational formula, we have the bound
	\begin{equation*}
		- \log \zZ_N < - c N^{4(1-\alpha)}
	\end{equation*}
	for all $N$, which implies that the densities
	\begin{equation*}
		\frac{1}{\zZ_N} e^{-\int_{\T^2} V_N(\Pi_N \phi)} \rightarrow 0
	\end{equation*}
    in probability with respect to $\mu$. But since their $L^1(\mu)$ norm are $1$, so it cannot converge in $L^1$. This completes the proof of Proposition~\ref{pr:measure_counter_eg}.

	\subsection{Proof of Theorem~\ref{th:main_measure}}
	
	\subsubsection{The main proposition and upper bound}
	
	Recall the renormalised potential $V_N$ and definition of the coefficients $\overline{a}_{j,N}$ in \eqref{eq:potential_renormalised_wick} and \eqref{eq:potential_coefficients_wick}. Let
	\begin{equation*}
		C_{N}^{(1)} = \overline{a}_{1,N} N^{2(1-\alpha)}\;, \qquad C_{N}^{(2)} = \big( \overline{a}_{0,N} - \overline{a}_{1,N} \sigma_N^2 \big) N^{4(1-\alpha)}. 
	\end{equation*}
	Writing $\phi_N = \Pi_N \phi$ for simplicity, we have
	\begin{equation*}
		V_{N}(\phi_N) = \sum_{j=2}^{m} \overline{a}_{j,N} N^{-(2j-4)(1-\alpha)} \phi_N^{\diamond (2j)}\;, 
	\end{equation*}
	where the Wick product is with respect to the Gaussian structure induced by $\mu$. In other words, we remove the $0$-th and $2$-nd chaos components from the polynomial. By standard hyper-contractivity arguments, one has
	\begin{equation*}
		\E^{\mu} \Big| \int_{\T^2} V_{N}(\phi_N) {\rm d}x - \lambda \int_{\T^2} \phi^{\diamond 4} {\rm d}x \Big|^{p} \rightarrow 0
	\end{equation*}
	as $N \rightarrow +\infty$. The key ingredient to pass the convergence to the level of exponential is the following uniform bound. 
	
	\begin{prop} \label{pr:uniform_bd_density}
		For every $p \geq 1$, we have
		\begin{equation*}
			\sup_{N \in \N} \E^{\mu} \Big( e^{- p \int_{\T^2} \widetilde{V_{N}}(\phi_N) {\rm d}x} \Big) < +\infty\;, \quad \E^{\mu} \Big( e^{- \lambda p \int_{\T^2} \phi^{\diamond 4} {\rm d}x + \frac{p}{2} \int_{\T^2} \phi^{\diamond 2} {\rm d}x } \Big) < +\infty. 
		\end{equation*}
	\end{prop}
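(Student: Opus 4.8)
The plan is to use the variational formula of Proposition~\ref{pr:variational_BD}, applied to $p\widetilde{V_N}$ in place of $\widetilde{V_N}$ (so that $-\log\zZ_N^{(p)}$ equals the infimum over drifts $v\in\HH_a$ of $\E^\P[p\int_{\T^2}\widetilde{V_N}(W_N+\iI_N(v)){\rm d}x + \tfrac12\int_0^1\|v(t)\|_{L^2}^2{\rm d}t]$), and to produce a \emph{lower bound} on this infimum that is uniform in $N$; this is exactly $\sup_N|\log\zZ_N^{(p)}|<\infty$ once combined with the trivial upper bound $-\log\zZ_N^{(p)}\le 0$ obtained from the drift $v\equiv 0$ (using $\widetilde{V_N}\le 0$ up to a bounded error, or more precisely a standard second-moment/Nelson-type estimate showing $\E^\mu e^{-p\int\widetilde{V_N}(\phi_N)}$ is bounded away from zero). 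The heart of the matter is therefore the uniform-in-$N$ lower bound for the variational problem, which is equivalent to the first assertion of the Proposition, and the second assertion ($\alpha$-limit finiteness) will follow either by the same argument applied directly to the limiting functional or as a consequence of Theorem~\ref{th:main_measure} together with Fatou.

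The key steps, in order. \emph{Step 1: Expansion of the drifted potential.} Fix $N$ and a drift $v$; write $Y_N:=W_N$ (so $\Law(Y_N)=\mu_N$, a Gaussian with the Wick structure already built in) and $Z_N:=\iI_N(v)$, which by Lemma~\ref{le:Cameron_Martin} satisfies $\|Z_N\|_{H^\alpha}^2\le C\int_0^1\|v(t)\|_{L^2}^2{\rm d}t=:C\|v\|_\ast^2$, and in particular controls $\|Z_N\|_{L^q}$ for all $q<\infty$ uniformly in $N$ since $H^\alpha(\T^2)\hookrightarrow L^q$. Expand $\widetilde{V_N}(Y_N+Z_N)$ as in \eqref{eq:expansion}: a sum over $\ell$ of terms $c_{j,\ell}N^{-(2j-4)(1-\alpha)}Y_N^{\diamond(2j-\ell)}Z_N^\ell$, plus the quadratic correction $-\tfrac12(Y_N^{\diamond2}+2Y_NZ_N+Z_N^2)$. \emph{Step 2: Coercive main term.} The $\ell=2m$, $j=m$ contribution is $\overline{a}_{m,N}\,Z_N^{2m}$ integrated over $\T^2$; more relevantly, collecting the $\ell=2j$ terms for all $j$ gives $\int_{\T^2}\big(\sum_{j=2}^m\overline{a}_{j,N}Z_N^{2j}\big){\rm d}x$ (the lower powers of $Z_N$ in $\widetilde{V_N}$ itself being absorbed), which by the positivity condition \eqref{eq:average_V_positive} and $\overline{a}_{j,N}\to\overline{a}_j$ is bounded below by $c\int_{\T^2}Z_N^{4}{\rm d}x - C$ for $N$ large, a genuinely coercive term. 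Together with the $+\tfrac12\|v\|_\ast^2\ge c\|Z_N\|_{H^\alpha}^2$ from the variational functional, this gives us two good quantities to spend. \emph{Step 3: Control of cross terms.} Every remaining term has $\ell<2j$, hence at least one factor of a positive-order Wick power $Y_N^{\diamond r}$ with $r\ge1$. Using the stochastic estimates of Section~\ref{renormalisation} (Wiener chaos, Proposition~\ref{Wienerchaos}, plus the regularity $Y_N^{\diamond r}\in\cC^{-r(1-\alpha)\sminus}$ uniformly in $N$, which holds precisely because $\alpha>\tfrac34$ keeps all needed Wick powers up to $r=2m$... actually only the paired structure matters, but in any case the relevant negative-order Besov bounds are uniform), one estimates $\big|\int_{\T^2}Y_N^{\diamond r}Z_N^{2j-\ell}{\rm d}x\big|$ by duality $\lesssim\|Y_N^{\diamond r}\|_{\cC^{-\kappa}}\|Z_N^{2j-\ell}\|_{W^{\kappa,1}}$ for small $\kappa>0$, then by fractional Leibniz (Lemma~\ref{le:fractional_Leibniz}) and Gagliardo–Nirenberg (Proposition~\ref{pr:general_Sobolev}) interpolate $\|Z_N^{2j-\ell}\|_{W^{\kappa,1}}\lesssim\|Z_N\|_{H^\alpha}^{\theta}\|Z_N\|_{L^q}^{2j-\ell-\theta}$ with an exponent $2j-\ell$ on $Z_N$ that is strictly less than $4$ relative to the available negative power $N^{-(2j-4)(1-\alpha)}$ — this is the crucial bookkeeping. \emph{Step 4: Absorption via Young.} For each cross term, combine the random factor (which has a moment bound $\E^\P\|Y_N^{\diamond r}\|_{\cC^{-\kappa}}^q\le C_q$ uniform in $N$) with Young's inequality $ab\le\epsilon a^{p'}+C_\epsilon b^{q'}$ so that: (i) the $Z_N$-power piece, being of degree $<4$, is dominated by $\epsilon\,c\int Z_N^4 + \epsilon\, c\|Z_N\|_{H^\alpha}^2 + C_\epsilon$ thanks to the negative power of $N$ making the prefactor small (here one must check that even when $j<m$, i.e. no negative power of $N$, the degree $2j-\ell$ with $\ell\ge1$ and $j\ge2$ is $\le 2j-1\le 2m-1$ but interpolated down to something $<4$ against $\int Z_N^4$ — for the genuinely high-degree terms the negative power $N^{-(2j-4)(1-\alpha)}$ is what does it, for $j=2$ the degree is already $\le3<4$); (ii) the random piece contributes only an $N$-uniform constant after taking $\E^\P$. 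Summing over the finitely many $(j,\ell)$ and choosing $\epsilon$ small, the infimand is $\ge -C$ uniformly, giving $-\log\zZ_N^{(p)}\ge -C$.

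The main obstacle is Step 3–4: making the dispersive/harmonic-analytic bookkeeping work so that every term of degree higher than $4$ in $Z_N$ (equivalently, every term where the naive power count looks dangerous) actually carries a compensating negative power of $N$ that survives the interpolation — i.e. verifying that the exponent of $N$ one loses from $\|Z_N\|$-norms (via $\|Z_N\|_{H^\alpha}$, which is only $\oO(1)$, not a power of $N$) never exceeds $(2j-4)(1-\alpha)$. In the parabolic analogues this is the "every term has the same size" phenomenon from the introduction, and here it reduces to a clean monomial inequality in the exponents; getting it to close is precisely where Assumption~\ref{as:V}(1) (which kills the would-be-worst $j=1$ term entirely, so the sum starts at $j=2$) and the positivity \eqref{eq:average_V_positive} (which furnishes the coercive $\int Z_N^4$) are both used. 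The second, limiting bound in the Proposition is comparatively soft: run the same variational argument for the fixed functional $\lambda\int\phi^{\diamond4}-\tfrac12\int\phi^{\diamond2}$ with no $N$ at all, where coercivity of $\lambda\int Z^4$ (since $\lambda=\overline{a}_2>0$) against the single cross term $\int W^{\diamond r}Z^{4-\ell}$ is immediate.
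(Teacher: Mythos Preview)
Your overall strategy---variational formula, expand the drifted potential, extract coercivity from the positivity of $\langle V\rangle$, then absorb cross terms by Young---is exactly the paper's. The gap is in the execution of Steps~2 and~4, and it is genuine.

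In Step~2 you extract only $c\int_{\T^2}Z_N^4$ from the positivity condition. But the pointwise inequality implied by \eqref{eq:average_V_positive} (together with $\overline{a}_{j,N}\to\overline{a}_j$) is stronger: substituting $z=Z_N/N^{1-\alpha}$ gives
\[
\sum_{j=2}^m \overline{a}_{j,N}\,N^{-(2j-4)(1-\alpha)}Z_N^{2j}\ \ge\ c\Big(Z_N^4 + N^{-(2m-4)(1-\alpha)}Z_N^{2m}\Big),
\]
and after integrating, the second coercive quantity $N^{-(2m-4)(1-\alpha)}\|Z_N\|_{L^{2m}}^{2m}$ (more generally $N^{-(2m_0-4)(1-\alpha)}\|Z_N\|_{L^{2m_0}}^{2m_0}$ for every $2\le m_0\le m$) is available. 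The paper uses this explicitly (Proposition~\ref{coercivity}), and it is \emph{essential}: your Step~4 does not close without it.

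Concretely, for cross terms with $Z_N$-power $\ell\ge 4$ (your notation drifts---in the expansion \eqref{eq:expansion} the $Z_N$-power is $\ell$ and the Wick power is $2j-\ell$), the degree in $Z_N$ strictly exceeds what $\|Z_N\|_{L^4}^4+\|Z_N\|_{H^\alpha}^2$ can absorb via Young, and the negative power of $N$ does not help because it cannot be traded against a \emph{deterministic} $Z_N$-factor. Take $m=3$, $\ell=5$, $j=3$: after duality, fractional Leibniz and Gagliardo--Nirenberg one arrives at
\[
N^{-2(1-\alpha)}\big|\langle W_N,Z_N^5\rangle\big|\ \lesssim\ N^{-2(1-\alpha)}\,\|W_N\|_{\cC^{-\beta}}\,\|Z_N\|_{H^\alpha}^{\theta}\,\|Z_N\|_{L^4}^{5-\theta}
\]
for some small $\theta>0$, and the Young budget is $\tfrac{\theta}{2}+\tfrac{5-\theta}{4}=\tfrac54-\tfrac{\theta}{4}>1$ for every admissible $\theta$. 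So there is no exponent left for the $N^{-2(1-\alpha)}\|W_N\|$ factor, and the absorption fails. The paper's remedy (its Proposition covering $4\le\ell\le 2m-1$) is to absorb into $\delta\big(\|Z_N\|_{H^\alpha}^2+N^{-(2m_0-4)(1-\alpha)}\|Z_N\|_{L^{2m_0}}^{2m_0}\big)$ with $m_0=\lfloor\ell/2\rfloor+1$; since $2m_0>\ell$ the budget closes, and one must then verify (nontrivially) that the leftover random factor $\big(N^{-\gamma}\|W_N^{\diamond(2j-\ell)}\|_{\cC^{-\beta}}\big)^{\eta}$ has uniformly bounded expectation for a suitable choice of $\beta,\gamma$. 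Your parenthetical in Step~4 (``the $Z_N$-power piece, being of degree $<4$'') is simply false for $\ell\ge 4$, and the assertion that ``the negative power of $N$ is what does it'' does not repair the argument absent the higher-order coercive term.
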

	
	We first show how Theorem~\ref{th:main_measure} follows from Proposition~\ref{pr:uniform_bd_density}. 
	
	\begin{proof}
		[Proof of Theorem~\ref{th:main_measure}] Note that under $\mu$, we have
		\begin{equation*}
			\widetilde{V_N}(\phi_N) = V_N(\phi_N) - \frac{1}{2} \phi_N^{\diamond 2}\;,
		\end{equation*}
	    so it suffices to prove the corresponding statement with $V_N$ instead of $\widetilde{V_N}$ and with the $\phi^{\diamond 2}$ removed in the limiting measure. Since
		\begin{equation*}
			\int_{\T^2} V_N(\phi_N) {\rm d}x \rightarrow \lambda \int_{\T^2} \phi^{\diamond 4} {\rm d}x
		\end{equation*}
		in probability, and since the exponential function is continuous, we have
		\begin{equation*}
			e^{- \int_{\T^2} V_{N}(\phi_N) {\rm d}x} \rightarrow e^{- \lambda \int_{\T^2} \phi^{\diamond 4} {\rm d}x}
		\end{equation*}
		in probability as well. Theorem~\ref{th:main_measure} then follows from the convergence in probability together with the uniform bounds in Proposition~\ref{pr:uniform_bd_density} (with a larger $p$). 
	\end{proof}

	We now turn to proving Proposition~\ref{pr:uniform_bd_density}. We only need to prove the first bound, as the second one is the special case with $\overline{a}_{2,N} = \lambda > 0$ and $\overline{a}_{j,N}=0$ for all other $j$. Also, by replacing $\overline{a}_{j,N}$ with $p \overline{a}_{j,N}$, the assumption \eqref{eq:average_V_positive} is not affected. Hence we can assume without loss of generality that $p=1$. 
	
	It suffices to prove a uniform-in-$N$ bound for $|\log \zZ_N|$. Jensen's inequality gives
	\begin{equation*}
		- \log \zZ_N = - \log \E^{\mu} \Big[ e^{-\int_{\T^2} \widetilde{V_{N}}(\phi_N) {\rm d}x} \Big] \leq \E^{\mu} \Big[ \int_{\T^2} \widetilde{V_{N}}(\phi_N) {\rm d}x \Big] = 0\;. 
	\end{equation*}
	So it remains to prove a lower bound for $- \log \zZ_N$. The rest of the section will be devoted to that.

	\subsubsection{Expansion}
	
	By the variational formula \eqref{eq:variational}, it suffices to prove a lower bound of its right hand side uniform over $N$ and all $L^2$ adapted process $u$. 
	
	Starting from the expansion \eqref{eq:expansion} and re-organising the sums, we have
	\begin{equation} \label{eq:V_N_organised}
		V_{N}(W_N + U_N) = \sum_{\ell=0}^{2m-1} \yY_{N,\ell} U_{N}^{\ell} - \frac{1}{2} W_N^{\diamond 2} - W_N U_N + \sum_{j=2}^{m} \overline{a}_{j,N} N^{-(2j-4)(1-\alpha)}U_{N}^{2j} - U_N^2 \;, 
	\end{equation}
	where
	\begin{equation*}
		\yY_{N,\ell} = \sum_{j = 2 \vee (\lfl \frac{\ell}{2} \rfl + 1)}^{m} \overline{a}_{j,N}
		\begin{psmallmatrix}
			2j \\ \ell
		\end{psmallmatrix}
		N^{-(2j-4)(1-\alpha)} \; W_{N}^{\diamond (2j-\ell)}\;, 
	\end{equation*}
	and we have separated out the terms with $2j=\ell$ in the sum. Note that the sum in $\ell$ (in the first term) is up to $2m-1$ since the last one ($\ell=2m$) is separated into the second term in \eqref{eq:V_N_organised}, so the sum defining $\yY_{N,\ell}$ is empty when $\ell = 2m$.

	\begin{prop}\label{coercivity}
		If the positivity condition \eqref{eq:average_V_positive} holds, then there exists $c, C>0$ such that
		\begin{equation*}
			\sum_{j=2}^{m} \overline{a}_{j,N} N^{-(2j-4)(1-\alpha)} U_{N}^{2j} - U_N^2 \geq c \big( U_{N}^{4} + N^{-(2m-4)(1-\alpha)} U_{N}^{2m} \big) - C
		\end{equation*}
		for all sufficiently large $N$. As a consequence, we have
		\begin{equation} \label{eq:V_N_lower_expansion}
			\begin{split}
				&\phantom{11}\int_{\T^2} \widetilde{V_N}(W_N + U_N) {\rm d}x + \frac{1}{2} \int_{0}^{1} \|u(t)\|_{L^2}^{2} {\rm d}t\\
				&\geq  \int_{\T^2} \Big( \sum_{\ell=0}^{2m-1} \yY_{N,\ell} U_{N}^{\ell} - \frac{1}{2} W_N^{\diamond 2} - W_N U_N \Big) {\rm d}x - C \\
				&\phantom{11}+ c \Big( \|U_N\|_{L^4}^{4} + N^{-(2m-4)(1-\alpha)} \|U_N\|_{L^{2m}}^{2m} + \|U_N\|_{H^{\alpha}}^{2} \Big)\;, 
			\end{split}
		\end{equation}
		where $U_N = \iI_N(u)$, and $\iI_N$ is defined in \eqref{eq:I_N}.  
	\end{prop}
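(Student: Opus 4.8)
My plan is to reduce everything to one elementary pointwise inequality for the (real-valued) function $U_N = \iI_N(u)$, which becomes transparent after a rescaling. First I would set $s := N^{-(1-\alpha)} t$ and note that for each $j$ one has $N^{-(2j-4)(1-\alpha)} t^{2j} = N^{4(1-\alpha)} s^{2j}$, whence
\begin{equation*}
	\sum_{j=2}^{m} \overline{a}_{j,N} N^{-(2j-4)(1-\alpha)} t^{2j} = N^{4(1-\alpha)} s^4\, Q_N(s)\;, \qquad Q_N(s) := \sum_{j=2}^{m} \overline{a}_{j,N}\, s^{2(j-2)}\;.
\end{equation*}
Thus all the negative powers of $N$ decorating the high-degree monomials collapse into the single prefactor $N^{4(1-\alpha)}$, and the question reduces to a uniform-in-$N$ lower bound for the polynomial $Q_N$.

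Next I would handle $Q_N$ by perturbation off its limit $Q(s) = \sum_{j=2}^{m} \overline{a}_j s^{2(j-2)}$. The positivity condition \eqref{eq:average_V_positive} says precisely $Q(s) > 0$ for all $s \in \R$; as an everywhere positive even polynomial, $Q$ has positive leading coefficient $\overline{a}_m$, so $Q(s) \geq c_0(1 + s^{2(m-2)})$ for some $c_0 > 0$. Since $\overline{a}_{j,N} \to \overline{a}_j$ for every $j$ and $s^{2(j-2)} \leq 1 + s^{2(m-2)}$ whenever $2 \leq j \leq m$, one gets $|Q_N(s) - Q(s)| \leq (m-1)\big(\max_{2\leq j\leq m}|\overline{a}_{j,N} - \overline{a}_j|\big)(1 + s^{2(m-2)})$, hence $Q_N(s) \geq \tfrac{c_0}{2}(1 + s^{2(m-2)})$ for all $s$ once $N$ is large. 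Undoing the substitution ($N^{4(1-\alpha)} s^4 = t^4$ and $N^{4(1-\alpha)} s^{2m} = N^{-(2m-4)(1-\alpha)} t^{2m}$) gives
\begin{equation*}
	\sum_{j=2}^{m} \overline{a}_{j,N} N^{-(2j-4)(1-\alpha)} t^{2j} \;\geq\; \tfrac{c_0}{2}\big( t^4 + N^{-(2m-4)(1-\alpha)} t^{2m} \big)\;,
\end{equation*}
and subtracting $t^2$ and completing the square ($\tfrac{c_0}{4} t^4 - t^2 \geq -\tfrac{1}{c_0}$) yields the claimed inequality with $c = \tfrac{c_0}{4}$, $C = \tfrac{1}{c_0}$, for all large $N$. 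Evaluating at $t = U_N(x)$ and integrating over $\T^2$ produces the pointwise statement of the proposition, with $\|U_N\|_{L^4}^4$ and $\|U_N\|_{L^{2m}}^{2m}$ and an additive constant $4\pi^2 C$.

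To obtain \eqref{eq:V_N_lower_expansion} from this, I would start from the decomposition \eqref{eq:V_N_organised}, keep the terms $\sum_{\ell=0}^{2m-1} \yY_{N,\ell} U_N^{\ell} - \tfrac12 W_N^{\diamond 2} - W_N U_N$ untouched, and bound the remaining part $\sum_{j=2}^{m} \overline{a}_{j,N} N^{-(2j-4)(1-\alpha)} U_N^{2j} - U_N^2$ by the coercivity just established. It then remains only to absorb the kinetic cost: Lemma~\ref{le:Cameron_Martin} applied to $U_N = \iI_N(u)$ gives $\|U_N\|_{H^\alpha}^2 \leq C_0 \int_0^1 \|u(t)\|_{L^2}^2\,{\rm d}t$, so $\tfrac12 \int_0^1 \|u(t)\|_{L^2}^2\,{\rm d}t \geq \tfrac{1}{2C_0}\|U_N\|_{H^\alpha}^2$; adding this and relabelling $c := \min\{\tfrac{c_0}{4}, \tfrac{1}{2C_0}\}$ delivers \eqref{eq:V_N_lower_expansion}.

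The one point requiring genuine care — the ``main obstacle'' — is that the positivity must be used \emph{globally}: individual monomials $\overline{a}_{j,N} N^{-(2j-4)(1-\alpha)} t^{2j}$ may carry negative coefficients, so no term-by-term estimate can work, and the argument must exploit that after rescaling the whole bracket is a fixed everywhere-positive polynomial up to a vanishing perturbation. This is exactly the mechanism by which the otherwise supercritical high-degree terms are rendered harmless. The rest — that the perturbation $\overline{a}_{j,N} \to \overline{a}_j$ (in particular $\overline{a}_{m,N} \to \overline{a}_m > 0$ for the leading coefficient) does not destroy the uniform coercivity — is the short $\varepsilon$-argument indicated above.
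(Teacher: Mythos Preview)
Your proof is correct and follows essentially the same approach as the paper's: positivity of the limiting polynomial $\sum_{j\ge 2}\overline{a}_j z^{2(j-2)}$, convergence $\overline{a}_{j,N}\to\overline{a}_j$, a Young-type inequality to absorb the $-U_N^2$ term, and Lemma~\ref{le:Cameron_Martin} for the $H^\alpha$ cost. Your rescaling $s=N^{-(1-\alpha)}t$ is a clean device for making the uniformity in $N$ explicit, but it encodes the same mechanism the paper invokes in its one-line sketch.
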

	\begin{proof}
		The first claim follows from the positivity assumption \eqref{eq:average_V_positive}, the convergence $\overline{a}_{j,N} \rightarrow \overline{a}_j$ for every $j$, and that
		\begin{equation*}
			U_N^2 \leq M + \frac{U_N^4}{M}
		\end{equation*}
	    for every $M \geq 1$. The second claim is a consequence of the first one and Lemma~\ref{le:Cameron_Martin}. 
	\end{proof}

	Our next aim is to show that for every sufficiently small $\delta > 0$, there exists constant $C=C(\delta,m)$ such that
	\begin{equation} \label{eq:absorb_aim}
		\begin{split}
			&\phantom{111} \Big| \int_{\T^2} W_{N}^{\diamond 2} {\rm d}x \Big| + \Big| \int_{\T^2} W_N U_N {\rm d}x \Big| + \sum_{\ell=0}^{2m-1} \Big| \int_{\T^2} \yY_{N,\ell} U_{N}^{\ell} {\rm d}x \Big|\\
			&\leq C \qQ_N(W_N) + \delta \Big( \|U_N\|_{L^4}^{4} + N^{-(2m-4)(1-\alpha)} \|U_N\|_{L^{2m}}^{2m} + \|U_N\|_{H^{\alpha}}^{2} \Big)\;,
		\end{split}
	\end{equation}
	where $\qQ_N(W_N)$ is some function depending on suitable (negative) Sobolev norm of $W_N$ whose expectation is uniformly bounded in $N$. If \eqref{eq:absorb_aim} is true, then we can combine it with \eqref{eq:V_N_lower_expansion} and Proposition~\ref{pr:variational_BD} to conclude the lower bound
	\begin{equation*}
		- \log \zZ_N \geq - C \E \big(  \qQ(W_N) \big) > - C
	\end{equation*}
	for some $C$ independent of $N$. Also note that it suffices to show that each term on the left hand side satisfies the bound. The rest of this section is devoted to the proof of \eqref{eq:absorb_aim}. 
	
	\subsubsection{The first two terms}
	The bounds for the first two terms on the left hand side of \eqref{eq:absorb_aim} are straightforward. For the first one, we have
	\begin{equation*}
		\Big| \int_{\T^2} W_N^{\diamond 2} {\rm d}x \Big| \leq \|W_{N}^{\diamond 2}\|_{H^{-2(1-\alpha)-\eps}}\;,
	\end{equation*}
    which is of the form $\qQ_N(W_N)$. For the second one, we have
    \begin{equation*}
    	\Big| \int_{\T^2} W_N U_N {\rm d}x \Big| \leq \|W_N\|_{H^{-\alpha}} \|U_N\|_{H^{\alpha}} \leq \frac{1}{\delta} \|W_N\|_{H^{-\alpha}}^2 + \delta \|U_N\|_{H^\alpha}^2\;,
    \end{equation*}
	which is again of the desired form. 
	
	\subsubsection{The case $0 \leq \ell \leq 3$}
	We now turn to the terms $\yY_{N,\ell} U_N^{\ell}$. We first consider the case when $0 \leq \ell \leq 3$. A typical term in $\yY_{N,\ell}$ for $0 \leq \ell \leq 3$ is of the form $$N^{-(2j-4)(1-\alpha)} \scal{W_{N}^{\diamond (2j-\ell)}, U_{N}^{\ell}}$$ for $j = 2, \dots, m$, where $\scal{\cdot,\cdot}$ denotes the $L^2(\T^2)$ inner product. 
	
	The term $\ell=0$ corresponds to $N^{-(2j-4)(1-\alpha)} \int W_{N}^{\diamond (2j)}$. It satisfies the bound
	\begin{equation*}
		N^{-(2j-4)(1-\alpha)} \Big| \int W_{N}^{\diamond (2j)} \Big| \lesssim N^{-(2j-4)(1-\alpha)} \|W_{N}^{\diamond (2j)}\|_{H^{-\beta}}
	\end{equation*}
	for every $\beta \geq 0$. By Lemma~\ref{le:Wick_bd}, since $\alpha \in (\frac{3}{4},1)$, its expectation is uniformly bounded in $N$ as long as $\beta>1$. Hence, we can take  $\qQ_N(W_N) = N^{-(2j-4)(1-\alpha)} \|W_{N}^{\diamond (2j)}\|_{H^{-(1+\eps)}}$ which satisfies the requirements for the bound \eqref{eq:absorb_aim}. 
	
	For $\ell=1$, it follows from duality and Cauchy-Schwarz that
	\begin{equation*}
		\begin{split}
			\phantom{111}N^{-(2j-4)(1-\alpha)} \big| \scal{W_{N}^{\diamond (2j-1)}, U_{N}} \big|
			&\leq N^{-(2j-4)(1-\alpha)} \|W_N^{\diamond (2j-1)}\|_{H^{-\alpha}} \|U_{N}\|_{H^\alpha}\\
			&\leq \delta^{-1} N^{-(4j-8)(1-\alpha)} \|W_N^{\diamond (2j-1)}\|_{H^{-\alpha}}^{2} + \delta \|U_N\|_{H^\alpha}^{2}\;. 
		\end{split}
	\end{equation*}
	By Lemma~\ref{le:Wick_bd}, the quantity $N^{-(4j-8)(1-\alpha)} \E \|W_{N}^{\diamond (2j-1)}\|_{H^{-\alpha}}^{2}$ is uniformly bounded in $N$ as long as $1 \wedge \alpha > 3(1-\alpha)$, which is the case for $\alpha \in (\frac{3}{4},1)$. So the desired bound \eqref{eq:absorb_aim} is true for $\ell=1$. 
	
	For $\ell = 2$, let $\beta>0$ to be specified later, and $p, q, q_1, q_2 \in (1,+\infty)$ be such that
	\begin{equation*}
		\frac{1}{p} + \frac{1}{q} = 1 \quad \text{and} \quad \frac{1}{q_1} + \frac{1}{q_2} = \frac{1}{q}\;. 
	\end{equation*}
	By duality of $W^{-\beta,p}$ and $W^{\beta,q}$ and then Lemma~\ref{le:fractional_Leibniz}, we have
	\begin{equation*}
		\begin{split}
			\big| \scal{W_{N}^{\diamond (2j-2)}, U_{N}^2} \big| &\leq \|W_{N}^{\diamond (2j-2)}\|_{W^{-\beta,p}} \|U_N^2\|_{W^{\beta,q}}\\
			&\lesssim \|W_{N}^{\diamond (2j-2)}\|_{W^{-\beta,p}} \|U_N\|_{W^{\beta,q_1}} \|U_N\|_{L^{q_2}}\;. 
		\end{split}
	\end{equation*}
	We furthermore choose $q$ sufficiently close to $1$ so that $q_1 \leq 2$ and $q_2 \leq 4$, and choose $\beta \in ( 2(1-\alpha),\alpha)$. This is possible as long as $\alpha > \frac{2}{3}$, which is in the range of our assumption. 
	
	Then multiplying both sides by $N^{-(2j-4)(1-\alpha)}$ and using H\"older to split the three terms, we get
	\begin{equation*}
		\begin{split}
			&\phantom{11}N^{-(2j-4)(1-\alpha)} \big| \scal{W_{N}^{\diamond (2j-2)} U_{N}^2} \big|\\
			&\lesssim N^{-(2j-4)(1-\alpha)} \|W_{N}^{\diamond (2j-2)}\|_{W^{-\beta,p}} \|U_N\|_{H^\alpha} \|U_{N}\|_{L^4}\\
			&\lesssim \delta^{-3} N^{-4(2j-4)(1-\alpha)} \|W_{N}^{\diamond (2j-2)}\|_{W^{-\beta,p}}^{4} + \delta \big( \|U_N\|_{H^\alpha}^2 + \|U_N\|_{L^4}^4 \big)\;, 
		\end{split}
	\end{equation*}
	where the proportionality constant does not depend on $\delta$. By Lemma~\ref{le:Wick_bd}, the first term above has finite (uniform-in-$N$) expectation since $\beta > 2(1-\alpha)$. Hence, it is of the form of the right hand side of \eqref{eq:absorb_aim}. The completes the case $\ell=2$. 
	
	For $\ell=3$, by duality and Lemma~\ref{le:fractional_Leibniz}, we have
	\begin{equation*}
		\begin{split}
			\big| \scal{W_{N}^{\diamond (2j-3)}, U_N^3} \big| &\leq \|W_{N}^{\diamond (2j-3)}\|_{W^{-\beta,\frac{1+\eps}{\eps}}} \|U_N^3\|_{W^{\beta,1+\eps}}\\
			&\lesssim \|W_{N}^{\diamond (2j-3)}\|_{W^{-\beta,\frac{1+\eps}{\eps}}} \|U_N\|_{W^{\beta,\frac{2(1+\eps)}{1-\eps}}} \|U_N\|_{L^4}^2\;, 
		\end{split}
	\end{equation*}
	where $\eps, \beta > 0$ are to be specified later. By Proposition~\ref{pr:general_Sobolev}, for $\beta < \alpha$, we have
	\begin{equation*}
		\|U_N\|_{W^{\beta,\frac{2(1+\eps)}{1-\eps}}} \lesssim_\eps \|U_N\|_{W^{\alpha,p}}^{\frac{\beta}{\alpha}} \|U_N\|_{L^{4}}^{1-\frac{\beta}{\alpha}} \lesssim \|U_N\|_{H^{\alpha}}^{\frac{\beta}{\alpha}} \|U_N\|_{L^4}^{1-\frac{\beta}{\alpha}}\;, 
	\end{equation*}
	where
	\begin{equation*}
		p = \frac{4(1+\eps)\beta}{(1-3\eps)\alpha + (1+\eps)\beta} < 2
	\end{equation*}
	if $\beta<\alpha$ and $\eps$ is sufficiently small (depending on $\alpha$, $\beta$), and hence the second inequality above (relaxing $W^{\alpha,p}$ to $H^\alpha$) is valid. Plugging it back into the original term and applying H\"older, we get
	\begin{equation*}
		\begin{split}
			&\phantom{11}N^{-(2j-4)(1-\alpha)} \big| \scal{W_{N}^{\diamond (2j-3)}, U_N} \big|\\
			&\lesssim_\eps C_{\delta} \Big( N^{-(2j-4)(1-\alpha)} \|W_{N}^{\diamond (2j-3)}\|_{W^{-\beta,\frac{1+\eps}{\eps}}} \Big)^{\frac{4\alpha}{\alpha-\beta}} + \delta \big( \|U_N\|_{H^\alpha}^2 + \|U_N\|_{L^4}^{4} \big)\;. 
		\end{split}
	\end{equation*}
	Again by Lemma~\ref{le:Wick_bd}, if we choose $\beta > 1-\alpha$, then the expectation of the first term above will be uniformly bounded in $N$, and hence satisfies the form of \eqref{eq:absorb_aim}. Recall that we have also required $\beta<\alpha$ when applying Proposition~\ref{pr:general_Sobolev} in the previous step. This is possible if $1-\alpha < \alpha$, which is true as long as $\alpha > \frac{1}{2}$ (which satisfies our assumption $\alpha \in (\frac{3}{4},1)$). 
	
	We have thus established the desired bound for $0 \leq \ell \leq 3$.

	\subsubsection{The case $4 \leq \ell \leq 2m-1$}
	
	We now turn to the situation when $4 \leq \ell \leq 2m-1$. The relevant terms to control here are $N^{-(2j-4)(1-\alpha)} \scal{W_{N}^{\diamond (2j-\ell)}, U_{N}^{\ell}}$ where $4 \leq \ell \leq 2m-1$ and $2j-\ell \geq 1$. We will prove the following proposition. 
	
	\begin{prop}
		Fix $4 \leq \ell \leq 2m-1$ and $j \leq m$ such that $2j-\ell \geq 1$. Let $m_0 = \lfl \frac{\ell}{2} \rfl + 1$. Then $m_0 \leq m$, and for every $\delta > 0$, there exists $C_\delta$ such that
		\begin{equation*}
			\begin{split}
				&\phantom{111}N^{-(2j-4)(1-\alpha)} \big| \scal{W_{N}^{\diamond (2j-\ell)}, U_{N}^{\ell}} \big|\\
				&\leq C_{\delta} \qQ_N(W_N) + \delta \big( \|U_N\|_{H^{\alpha}}^{2} + N^{-(2m_0-4)(1-\alpha)} \|U_N\|_{L^{2 m_0}}^{2m_0} \big)\;, 
			\end{split}
		\end{equation*}
		where $\qQ_N (W_N)$ is a positive function depending on certain negative Sobolev norm of $W_N$, and its expectation is uniformly bounded in $N$. The constant $C_{\delta}$ is independent of $N$. 
	\end{prop}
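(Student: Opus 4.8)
Here is a plan of proof.

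\smallskip

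First, $m_0\le m$: since $\ell\le 2m-1$ we have $m_0=\lfloor\ell/2\rfloor+1\le\lfloor(2m-1)/2\rfloor+1=m$, so $\|U_N\|_{L^{2m_0}}$ is one of the norms furnished by the coercive bound of Proposition~\ref{coercivity}; also $2j-\ell\ge1$ forces $j\ge m_0$, hence $2j-4\ge 2m_0-4$, which we use below. The plan is then to dualise: fix a Sobolev index $\beta\in(0,\alpha)$ (to be chosen) and H\"older conjugates $\frac1p+\frac1q=1$, and estimate
\[
N^{-(2j-4)(1-\alpha)}\big|\scal{W_N^{\diamond(2j-\ell)},U_N^{\ell}}\big|\le N^{-(2j-4)(1-\alpha)}\,\|W_N^{\diamond(2j-\ell)}\|_{W^{-\beta,p}}\,\|U_N^{\ell}\|_{W^{\beta,q}}\,.
\]

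\smallskip

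For the factor $\|U_N^\ell\|_{W^{\beta,q}}$ I would iterate the fractional Leibniz rule (Lemma~\ref{le:fractional_Leibniz}) so as to place the $\beta$ derivatives on a single copy of $U_N$, obtaining $\|U_N^\ell\|_{W^{\beta,q}}\lesssim \|U_N\|_{W^{\beta,q_1}}\|U_N\|_{L^{2m_0}}^{\ell-1}$ with $\frac1{q_1}+\frac{\ell-1}{2m_0}=\frac1q$, and then interpolate by the general Gagliardo--Nirenberg inequality (Proposition~\ref{pr:general_Sobolev}), $\|U_N\|_{W^{\beta,q_1}}\lesssim\|U_N\|_{H^\alpha}^{\beta/\alpha}\|U_N\|_{L^{2m_0}}^{1-\beta/\alpha}$. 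Consistency of exponents fixes $\frac1{q_1}=\frac{\beta}{2\alpha}+\frac{1-\beta/\alpha}{2m_0}$, hence $\frac1q=\frac{\ell}{2m_0}+\frac{\beta(m_0-1)}{2\alpha m_0}$, and
\[
\|U_N^\ell\|_{W^{\beta,q}}\ \lesssim\ \|U_N\|_{H^\alpha}^{\beta/\alpha}\,\|U_N\|_{L^{2m_0}}^{\,\ell-\beta/\alpha}\,.
\]
Because $\ell<2m_0=2\lfloor\ell/2\rfloor+2$, the requirement $q>1$ reads exactly $\beta<\frac{\alpha(2m_0-\ell)}{m_0-1}$, and since $m_0\le\ell-1$ for $\ell\ge4$ this automatically entails $\beta<\alpha$, so only one upper constraint on $\beta$ survives.

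\smallskip

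For the Gaussian factor I would use the Wick estimate of Lemma~\ref{le:Wick_bd} together with hypercontractivity (Proposition~\ref{Wienerchaos}), supplemented by an elementary frequency-localisation (Bernstein-type) bound when $\beta\le(2j-\ell)(1-\alpha)$, which converts the frequency support $|k|\le(2j-\ell)N$ of $W_N^{\diamond(2j-\ell)}$ into regularity; this gives $\big\|\,\|W_N^{\diamond(2j-\ell)}\|_{W^{-\beta,p}}\,\big\|_{L^r(\Omega)}\lesssim N^{((2j-\ell)(1-\alpha)-\beta)_++\eps}$, uniformly in $N$, for every $r<\infty$ and $\eps>0$. Writing $\|U_N\|_{H^\alpha}^{\beta/\alpha}=(\|U_N\|_{H^\alpha}^2)^{\beta/2\alpha}$ and extracting the weight $N^{-(2m_0-4)(1-\alpha)}$ from $\|U_N\|_{L^{2m_0}}^{\ell-\beta/\alpha}$, a three–term Young inequality with exponents $\tfrac1\kappa,\tfrac{2\alpha}{\beta},\tfrac{2m_0}{\ell-\beta/\alpha}$ (where $\kappa=1-\tfrac1q>0$ precisely by $q>1$) yields the claimed bound with
\[
\qQ_N(W_N):=\Big(N^{\,c}\,\|W_N^{\diamond(2j-\ell)}\|_{W^{-\beta,p}}\Big)^{1/\kappa},\qquad c:=-(2j-4)(1-\alpha)+(2m_0-4)(1-\alpha)\tfrac{\ell-\beta/\alpha}{2m_0},
\]
and $C_\delta$ independent of $N$. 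It then remains to check that $\E[\qQ_N(W_N)]$ is bounded uniformly in $N$, i.e.\ $c+\big((2j-\ell)(1-\alpha)-\beta\big)_+\le0$. If $\beta\ge(2j-\ell)(1-\alpha)$ this reduces to $c\le0$, which follows from $(2m_0-4)\tfrac{\ell}{2m_0}\le 2m_0-4\le 2j-4$ (using $\ell<2m_0$ and $j\ge m_0$). If $\beta<(2j-\ell)(1-\alpha)$ it becomes $(1-\alpha)\big[(2m_0-4)\tfrac{\ell-\beta/\alpha}{2m_0}-(\ell-4)\big]<\beta$; the left side equals $(1-\alpha)\tfrac{2(2m_0-\ell)}{m_0}>0$ at $\beta=0$ and is decreasing in $\beta$, so it holds for all $\beta$ above an explicit threshold $\beta_*\le(1-\alpha)\tfrac{2(2m_0-\ell)}{m_0}$.

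\smallskip

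The two constraints $\beta_*<\beta<\tfrac{\alpha(2m_0-\ell)}{m_0-1}$ are compatible: it suffices that $(1-\alpha)\tfrac{2}{m_0}<\tfrac{\alpha}{m_0-1}$, i.e.\ $m_0(2-3\alpha)<2(1-\alpha)$, which holds for all $m_0$ as soon as $\alpha>\tfrac23$; this is the only point where the restriction on $\alpha$ enters for this family of terms. Choosing any such $\beta$ and a small enough $\eps$ closes the argument. The main obstacle is precisely this last balancing of powers of $N$: $W_N^{\diamond(2j-\ell)}$ becomes arbitrarily singular as $j\to m$, its negative-order norm growing like $N^{(2j-\ell)(1-\alpha)-\beta}$, but the prefactor $N^{-(2j-4)(1-\alpha)}$ over-compensates by exactly $N^{-(\ell-4)(1-\alpha)}$ --- here the hypothesis $\ell\ge4$ is essential --- and the residual surplus, once spent against the $N^{-(2m_0-4)(1-\alpha)}$–weighted $L^{2m_0}$ part of the energy (available since $\ell<2m_0$), must still leave a strictly negative power to absorb the $N^{\eps}$ losses. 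Arranging $\beta,p,q,q_1$ to thread this uniformly in the degree $m$ is the crux; everything else is a routine combination of Lemmas~\ref{le:fractional_Leibniz} and \ref{le:Wick_bd} with Propositions~\ref{pr:general_Sobolev} and \ref{Wienerchaos}.
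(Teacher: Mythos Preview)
Your proposal is correct and follows essentially the same route as the paper: dualise, apply fractional Leibniz to isolate one derivative-carrying copy of $U_N$, interpolate that copy between $H^\alpha$ and $L^{2m_0}$ via Gagliardo--Nirenberg, then split with a three-term Young/H\"older and close using the Wick moment bound (Lemma~\ref{le:Wick_bd}). Your constraint $q>1\Leftrightarrow\beta<\tfrac{\alpha(2m_0-\ell)}{m_0-1}$ is exactly the paper's \eqref{eq:beta_constraint_1}, your exponent $c$ equals the paper's $\gamma-(2j-4)(1-\alpha)$, and your lower threshold $\beta_*$ coincides with the paper's \eqref{eq:beta_constraint_2}; the only quantitative difference is that by bounding $\beta_*$ above by its $\beta=0$ value you obtain compatibility for $\alpha>\tfrac{2}{3}$ rather than the paper's $\alpha>\tfrac{1}{2}$, which is immaterial since $\alpha>\tfrac{3}{4}$ throughout.
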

	\begin{proof}
		We divide the argument into several steps. 
		\begin{flushleft}
			\emph{Step 1}:
		\end{flushleft}
		Let $\beta,\eps>0$ be two parameters whose values will be specified later. By duality and repeated applications of Lemma~\ref{le:fractional_Leibniz}, we have
		\begin{equation*}
			\begin{split}
				\big| \scal{W_{N}^{\diamond (2j-\ell)}, U_N^{\ell}} \big| &\leq \|W_N^{\diamond (2j-\ell)}\|_{W^{-\beta,\frac{1+\eps}{\eps}}} \|U_N^{\ell}\|_{W^{\beta,1+\eps}}\\
				&\lesssim_\eps \|W_{N}^{\diamond (2j-\ell)}\|_{W^{-\beta,\frac{1+\eps}{\eps}}} \|U_N\|_{W^{\beta,p_\eps}} \|U_N\|_{L^{2m_0}}^{\ell-1}\;, 
			\end{split}
		\end{equation*}
		where
		$
		p_\eps = \frac{2(1+\eps) m_0}{2m_0 - (1+\eps)(\ell-1)}\;,
		$
		and it decreases to $\frac{2 m_0}{2 m_0 - \ell+1}$ as $\eps \rightarrow 0$. If $\beta < \alpha$, then by Proposition~\ref{pr:general_Sobolev}, we can further control the quantity $\|U_N\|_{W^{\beta,p_\eps}}$ by
		\begin{equation*}
			\|U_N\|_{W^{\beta,p_\eps}} \lesssim \|U_N\|_{W^{\alpha,q_\eps}}^{\frac{\beta}{\alpha}} \|U_N\|_{L^{2m_0}}^{1-\frac{\beta}{\alpha}}\;, 
		\end{equation*}
		where $q_\eps = \frac{2m_0 \beta}{\frac{2 m_0 \alpha}{p_\eps} - (\alpha-\beta)}$,  and $q_\eps$ decreases to $\frac{2 m_0 \beta}{(2m_0-\ell)\alpha+\beta}$ as $\eps \rightarrow 0$. Hence, if we choose $\beta$ such that
		\begin{equation} \label{eq:beta_constraint_1}
			\frac{2 m_0 \beta}{(2m_0-\ell)\alpha + \beta} < 2\;, 
		\end{equation}
		and choose $\eps>0$ sufficiently small (depending on $\beta$), then $q_\eps < 2$ and we can relax $\|U_N\|_{W^{\alpha,q_\eps}}$ to $\|U_N\|_{H^\alpha}$. Also relaxing $\|W_{N}^{\diamond (2j-\ell)}\|_{W^{-\beta,\frac{1+\eps}{\eps}}}$ to $\|W_{N}^{\diamond (2j-\ell)}\|_{\cC^{-\beta}}$, we obtain the bound
		\begin{equation} \label{eq:bd_intermediate_1}
			|\scal{W_{N}^{\diamond (2j-\ell)}, U_N^{\ell}}| \lesssim \|W_N^{\diamond (2j-\ell)}\|_{C^{-\beta}} \|U_N\|_{H^\alpha}^{\frac{\beta}{\alpha}} \|U_N\|_{L^{2m_0}}^{\ell-\frac{\beta}{\alpha}}\;. 
		\end{equation}
		The proportionality constant depends on the parameters $\alpha$ and $\beta$ but is independent of $N$. Note that the right hand side as well as the proportionality constant does not depend on $\eps$. 
		
		Note that we have previously chosen $\beta < \alpha$. But with the assumption on $m_0$, this is implied by the constraint \eqref{eq:beta_constraint_1}. Hence the only constraint for \eqref{eq:bd_intermediate_1} to hold is \eqref{eq:beta_constraint_1}. 
		\begin{flushleft}
			\begin{flushleft}
				\textit{Step 2. }
			\end{flushleft}
		\end{flushleft}
		We re-write the bound \eqref{eq:bd_intermediate_1} as
		\begin{equation*}
			\begin{split}
				\phantom{111}N^{-(2j-4)(1-\alpha)} |\scal{W_{N}^{\diamond (2j-\ell)}, U_N^{\ell}}|
				&\lesssim \frac{\|W_{N}^{\diamond (2j-\ell)}\|_{C^{-\beta}}}{N^{(2j-4)(1-\alpha)-\gamma}} \cdot \|U_N\|_{H^{\alpha}}^{\frac{\beta}{\alpha}} \cdot \big( N^{-\frac{\gamma \alpha}{\ell \alpha-\beta}} \|U_N\|_{L^{2m_0}} \big)^{\ell-\frac{\beta}{\alpha}}\;. 
			\end{split}
		\end{equation*}
		Hence, if we choose $\gamma$ such that
		\begin{equation} \label{eq:gamma_choice}
			\gamma \cdot \frac{2m_0 \alpha}{\ell \alpha - \beta} = (2m_0-4)(1-\alpha) \Longleftrightarrow \gamma = \frac{(m_0-2)(\ell \alpha - \beta)(1-\alpha)}{m_0 \alpha}\;, 
		\end{equation}
		we can use H\"older to separate the three terms in the product above so that
		\begin{equation*}
			\begin{split}
				&\phantom{111}N^{-(2j-4)(1-\alpha)} |\scal{W_{N}^{\diamond (2j-\ell)}, U_N^{\ell}}| \\
				\leq &C_{\delta} \big\| \frac{W_{N}^{\diamond (2j-\ell)}}{N^{(2j-4)(1-\alpha)-\gamma}} \big\|_{\mathcal{C}^{-\beta}}^{\eta} + \delta \big( \|U_N\|_{H^\alpha}^{2} + N^{-(2m_0-4)(1-\alpha)} \|U_N\|_{L^{2m_0}}^{2m_0} \big)\;, 
			\end{split}
		\end{equation*}
		where
		\begin{equation*}
			\eta = \frac{2 m_0 \alpha}{(2m_0 - \ell)\alpha - (m_0-1)\beta}\;. 
		\end{equation*}
		Note that the use of H\"older and hence the above bound is valid if $\eta > 1$, which is implied by the constraint \eqref{eq:beta_constraint_1}. 
		
		\begin{flushleft}
			\textit{Step 3. }
		\end{flushleft}
		It then remains to show that for every $\alpha \in (\frac{3}{4}, 1)$, there exists $\beta$ satisfying \eqref{eq:beta_constraint_1} such that for $\gamma$ given in \eqref{eq:gamma_choice} and
		\begin{equation*}
			\qQ_N(W_N) = \frac{\|W_{N}^{\diamond (2j-\ell)}\|_{\cC^{-\beta}}}{N^{(2j-4)(1-\alpha)-\gamma}}\;, 
		\end{equation*}
		one has
		\begin{equation*}
			\sup_{N} \E |\qQ_N(W_N)|^{\eta} < +\infty. 
		\end{equation*}
		This is equivalent to the following two constraints on $(\beta,\gamma)$: 
		\begin{enumerate}
			\item $(2j-4)(1-\alpha) - \gamma \geq 0$; 
			\item $\beta \wedge 1 > \gamma - (\ell-4)(1-\alpha)$. 
		\end{enumerate}
		We first check the second one. Note that \eqref{eq:beta_constraint_1} implies $\beta < \alpha < 1$, so the left hand side $\beta \wedge 1$ could be replaced by $\beta$. Routine algebraic calculations then show that the second constraint above is equivalent to
		\begin{equation} \label{eq:beta_constraint_2}
			\beta > \frac{2 (2m_0 - \ell) \alpha (1-\alpha)}{m_0 + 2 \alpha - 2}\;. 
		\end{equation}
		Combing \eqref{eq:beta_constraint_1} and \eqref{eq:beta_constraint_2}, we see that a possible choice of $\beta$ exists if
		\begin{equation*}
			\frac{2 (2m_0 - \ell) \alpha (1-\alpha)}{m_0 + 2 \alpha - 2} < \frac{2m_0 - \ell}{m_0 - 1} \cdot \alpha\;, 
		\end{equation*}
		which is true as long as $\alpha > \frac{1}{2}$. 
		
		It remains to check the first constraint above. This can be reduced to
		\begin{equation} \label{eq:beta_constraint_3}
			\beta \geq \frac{(4m_0 - 2 \ell) - (2j-\ell) m_0}{m_0 - 2} \cdot \alpha\;. 
		\end{equation}
		Combing it with \eqref{eq:beta_constraint_1}, we see that a possible choice of $\beta$ exists if
		\begin{equation*}
			\frac{(4m_0 - 2 \ell) - (2j-\ell) m_0}{m_0 - 2} < \frac{2 m_0 - \ell}{m_0 - 1}\;, 
		\end{equation*}
		which holds if $2j-\ell \geq 1$ and $m_0 = \lfl \frac{\ell}{2} \rfl + 1 \leq \ell -1$. 
		
		We have thus shown that for $\alpha \in (\frac{3}{4}, 1)$, there exists choice of $\beta$ and $\gamma$ as specified above so that all the bounds hold. This completes the proof of the proposition. 
	\end{proof}

	\section{The wave dynamics}
	\label{sec:wave_convergence}

	Consider the wave dynamics:
	\begin{equation}\label{Universality-N}
		\begin{cases} 
			&\d_t^2 u_N + |\nabla|^{2\alpha} u_N + \Pi_N V_N'(\Pi_N u_N) = 0\;,\\
			& (u_N, \d_t u_N)|_{t=0} = \Pi_N\vec{\phi},
		\end{cases}
	\end{equation}
where
$$ \Pi_N\vec{\phi}:=\frac{1}{2\pi}\Big(\sum_{|k|\leq N}\frac{g_k(\omega)}{\sqrt{1+|k|^{2\alpha}}}\mathrm{e}^{ik\cdot x},\; 
\sum_{|k|\leq N}h_k(\omega)\mathrm{e}^{ik\cdot x}
\Big).
$$
	Denote by
	\begin{align*}  V_N(\varphi):=\ov{a}_{1,N}N^{2\beta}H_2(\varphi;\widetilde{\sigma}_N^2)+W_N(\varphi),\quad W_N(\varphi):=\sum_{j=2}^m\ov{a}_{j,N}N^{-(2j-4)\beta}H_{2j}(\varphi;\widetilde{\sigma}_N^2),
	\end{align*}
	where $\kappa_N:=2a_{1,N}N^{2\beta}-1$. We rewrite the equation \eqref{Universality-N} as
	\begin{equation}\label{Universality-N'}
		\begin{cases} 
		&\d_t^2 u_N + (\dD^{\alpha})^2 u_N +\kappa_N\Pi_Nu_N+ \Pi_N W_N'(\Pi_N u_N) = 0\;,\\ &(u_N, \d_t u_N)|_{t=0} = \Pi_N\vec{\phi}\;.
		\end{cases}
	\end{equation}
	Note that for each fixed $N$, \eqref{Universality-N'} is globally well-posed. Indeed, when writing in Fourier variables, the equation \eqref{Universality-N'} is a finite-dimensional system and its local well-posedness is ensured by the Cauchy-Lipschitz Theorem. Moreover, the conserved energy
	$$ \mathcal{E}(u_N(t)):=\int_{\T^2}\big(\frac{1}{2}(|\partial_tu_N|^2+||\nabla|^{\alpha}u_N|^2 )+V_N(\Pi_Nu_N)\big)  {\rm d}x
	$$ 
	is a Lyapunov functional that controls the quantity 
	$$ \|\partial_tu_N(t)\|_{L^2(\T^2)}^2+\|u_N\|_{H^{\alpha}(\T^2)}^2+N^{-(2m-4)\beta}\|u_N\|_{L^{2m}(\T^2)}^{2m}-C_{N,m}\|u_N\|_{L^2(\T^2)}^2.
	$$ 
	Since
	$$ \|u_N\|_{L^2(\T^2)}^2\leq C\|u_N\|_{L^{2m}(\T^2)}^{2}\leq \frac{1}{2C_{N,m}}N^{-(2m-4)\beta}\|u_N\|_{L^{2m}(\T^2)}^{2m}+C'_{N,m},
	$$
	we deduce that $u_N$ cannot blowup in finite time. We denote by $\vec{\Phi}_N(t)$ the flow of \eqref{Universality-N'}, and we recall that $\vec{\nu}_N$ is invariant under $\vec{\Phi}_N(t)$.

	In this section, we will prove Theorem \ref{thm:dynamics} with more precise statements: the well-posedness of the renormalized and the convergence of \eqref{Universality-N}.
	Heuristically, recall from Proposition \ref{convergeneclinear} that $\kappa_N\rightarrow \kappa\in\R$ and
	$$ |\kappa_N-\kappa|\leq CN^{-(2\alpha-1)}.
			$$
	Then formal analysis suggests that as $N\rightarrow\infty$, \eqref{Universality-N'} should converge to the renormalized cubic wave equation
	\begin{align}\label{cubic} \partial_t^2u+(\dD^{\alpha})^2u+\kappa u+4\ov{a}_2 u^{\diamond 3}=0,\quad (u,\partial_tu)|_{t=0}=\vec{\phi}
	\end{align}
	where $$u^{\diamond 3}:=\lim_{N\rightarrow\infty}\Pi_NH_4(\Pi_Nu;\widetilde{\sigma}_N^2)$$ is a well-defined object on the support of $\mu$.
	The goal of this section is to rigorously justify the above convergence. 

\subsection{More notations}	
	Before presenting the main propositions, we need more notations.
	Define the linear propagators $\sS(t)$ and $\sS'(t)$ by
			\begin{equation*}
				\sS(t) \vec{f} = \cos(t \dD^{\alpha}) f + \frac{\sin(t \dD^{\alpha})}{\dD^{\alpha}} f'\;, \quad \sS'(t) \vec{f} = - \scal{\dD}^{\alpha} \sin(t \dD^{\alpha}) f + \cos(t \dD^{\alpha}) f'\;, 
			\end{equation*}
			and let $\vec{\sS}(t) = \big(\sS(t), \sS'(t)\big)$, where we again  write $\vec{f} = (f,f')$. 
				For every $\vec{\phi} \in \dD'(\T^2) \times \dD'(\T^2)$, denote
			\begin{equation*}
				\<1>(t, \cdot) = \<1>(\vec{\phi})(t, \cdot):= \sS(t) \vec{\phi}\; \text{ and }\vec{\<1>}:=\vec{\mathcal{S}}(t)\vec{\phi}.
			\end{equation*}
			Sometimes we will omit the dependence on $\vec{\phi}$ in the notation $\<1>$ for simplicity. For an integer $N\in\N$, denote
			$ \<1>_N:=\Pi_N\<1>
			$ and $\vec{\<1>}_N:=\Pi_N\vec{\<1>}$.

We will frequently use two	mall parameters $\eps,\theta_0$ such that
	$$  \theta_0 \ll \eps\ll 1.
	$$			
Throughout this section, the symbol $\eps\ll 1$ always means that
$$  \eps<2^{-100m}\times \beta^{100}. 
$$
Recall that	$ \beta=1-\alpha,s_0=4\alpha-3.
$
	 Since the flow of the wave equation is vector-valued, we denote by
		$$ \mathcal{H}^s:=H^s\times H^{s-\alpha},\quad \mathcal{W}^{s,r}:=W^{s,r}\times W^{s-\alpha,r}.
		$$
 For given functions $f,\vec{f}=(f,f')$ and $I\subset\R$, we define for $\sigma\in\R$ the norms
$$ \|f\|_{Y^{\sigma}(I)}:=\|f\|_{L_t^{\infty}H^{\sigma}(I)}+\|f\|_{L_t^{2+\theta_0}L_x^{2+\frac{4}{\theta_0}}(I) }
$$
and
$$ \|\vec{f}\|_{\mathcal{Y}^{\sigma}(I)}:=\|\vec{f}\|_{L_t^{\infty}\mathcal{H}^{\sigma}(I)}+\|f\|_{L_t^{2+\theta_0}L_x^{2+\frac{4}{\theta_0}}(I) }.
$$
Note that the norm $\big(2+\theta_0,2+\frac{4}{\theta_0}
\big)$ is Strichartz admissible. For the solution $u$ of
$$ \partial_t^2u+(\dD^{\alpha})^2u=F, \quad (t,x)\in I\times \T^2,
$$
 we will use in particular the following inequality
\begin{align}\label{Strichartz2d} 
\|(u(t),\partial_tu(t))\|_{Y^{s_1}(I)}\lesssim \|(u(t_0),\partial_tu(t_0))\|_{\mathcal{H}^{s_1}}+\|F\|_{L_t^1H^{s_1-\alpha}(I)},	
\end{align}
provided that $s_1>\frac{2-\alpha}{2+\theta_0}$ and $t_0\in I$.

	
	\subsection{Well-posedness for the cubic equation}
	We sketch the almost sure global well-posedness of \eqref{cubic} whenever $\alpha>\frac{8}{9}$. The local well-posedness follows the recentering scheme of Bourgain \cite{Bo}, while the global well-posedness follows the invariant argument of Bourgain \cite{Bo}. 
	
	Consider the truncated equation
	\begin{align}\label{cubicN} \partial_t^2v_N+(\dD^{\alpha})^{2}v_N+\Pi_N(\kappa u_N+4\ov{a}_2u_N^{\diamond 3})=0,\quad (u_N,\partial_tu_N)|_{t=0}=\Pi_N\vec{\phi}.
	\end{align}
	Denote by
	$$ \mathcal{J}_{t_0}:=\int_{t_0}^t\frac{\sin((t-t')\dD^{\alpha} )}{\dD^{\alpha}}dt', 
	$$
	the Duhamel operator starting at time $t_0$, and
	we decompose the solution $v_N(t)$ of \eqref{cubicN} as
	$ v_N(t)=\<1>_N(t)+w_N(t),
	$
	then $w_N(t)$ solves the integral equation
	$$ w_N(t)=\Pi_N\mathcal{J}_0(\kappa(\<1>_N+w_N)+4\ov{a}_2(\<1>_N+w_N)^{\diamond 3} ).
	$$
	The remainder $w(t)$ is pretended to be in a more regular space $L_t^{\infty}H_x^s$ with $s=s_0-\epsilon$.
	For $q\in[2,\infty)$, by the large deviation estimate, $R$-certainly, i.e. outside a set of $\mu$-measure $<e^{-cR^{c'}}$, we have
			$$ \|\<1>_N^{\diamond l}\|_{L_t^qW_x^{-(3-l)\beta-\eps,\infty}([0,1])}\leq R,\quad l=1,2,3.
			$$
	by Lemma \ref{le:wave_cubic}, for $\tau\ll R^{-\frac{4}{3}}\ll 1$, 
	$$ \big\|\Pi_N\mathcal{J}_0(\kappa(\<1>_N+w_N)+4\ov{a}_2(\<1>_N+w_N)^{\diamond 3} )\big\|_{L_t^{\infty}H_x^s([0,\tau]\times\T^2)}\leq CR\tau^{\frac{3}{4}}\ll 1.
	$$
	Therefore, $R$-certainly we have local well-posedness on $[0,\tau]$, with a reminder $w_N\in Y^s([0,\tau])$ as well as the  convergence $w_N\rightarrow w$ in $Y^s([0,\tau])$ for $s=4\alpha-3-\eps$. To iterate the local well-posedness (convergence) to a long time interval, we make use of the invariance of the Gibbs measure 
	$$ \widetilde{\nu}_N(d\phi):=\exp\Big(-\int_{\T^2}\kappa (\Pi_N\phi)^{\diamond 2}+4\ov{a}_2(\Pi_N\phi)^{\diamond 4}\Big)\mu_{\alpha}(d\phi).
	$$ 
	Though the sign of $\kappa$ may not be positive, due to the defocusing nature $\ov{a}_2>0$, $\widetilde{\nu}_N\rightarrow \nu$, the Gibbs measure associated to \eqref{cubic}. The rest globalization argument is standard (see for example \cite{STz}) and we omit the detail. Furthermore, we have the invariance of $\vec{\nu}:=\nu\otimes\mu'$ along the flow $\vec{\Phi}(t)$ of \eqref{cubic}. To summarize, the version of well-posedness for the cubic equation is as follows:

	\begin{prop} \label{th:wave_cubicdetailed}
		Let $T>0$, $\alpha \in \big(\frac{8}{9},1\big),0<\eps\ll 1,$ be given. Assume that  and $s=s_0-\eps$. Then there exists a measurable set $\Sigma_0 \subset \mathcal{H}^{-\beta-\eps}$ with $\vec{\mu}(\Sigma_0) = 1$ and a flow map
		\begin{equation*}
			\vec{\Phi}(t) = \big( \Phi(t), \Phi'(t) \big)
		\end{equation*}
		defined on $\Sigma_0$ with the following properties:
		\begin{enumerate}
			
			\item $u(t):=\Phi(t)\vec{\phi}$ is the unique limit in $C([0,T];H^{-\beta-\eps}(\T^2))$ of the sequence of smooth solutions $v_N$ of
		\eqref{cubicN}.
			\item $\vec{\Phi}(t)(\Sigma_0) = \Sigma_0$ for every $t \in \R$ and the flow property holds for $\vec{\Phi}(t)$. 
			\item The measure $\vec{\nu}$ is invariant under the flow $\vec{\Phi}(t)$.
			\item For every $\vec{\phi} \in \Sigma_0$ the function
			\begin{equation*}
				(w(t),\partial_tw(t)) := \vec{\Phi}(t) \vec{\phi} - \vec{\<1>}(\phi)
			\end{equation*}
			solves the equation
			\begin{equation*}
				\begin{cases}
					&\d_t^2 w + (\dD^{\alpha})^2w+\kappa w+ 4\ov{a}_2  \big( \<1>^{\diamond 3} + 3 \<1>^{\diamond 2} w + 3 \<1> w^2 + w^3 \big) = 0 \;,\\
					&\big( w(0), \d_t w(0) \big) = (0,0) \;.
				\end{cases}
			\end{equation*}
			in $\cC([0,T]; H^{s}(\T^2))\cap L_t^{2+\theta_0}L_x^{\frac{2(\theta_0+2)}{\theta_0}}([0,T]\times\T^2)$ in the sense that the corresponding Duhamel formula holds. Furthermore, the random object $\<1>(\phi)$ verifies
			$$ \|\<1>(\phi)^{\diamond l}\|_{L_t^{10m}W_x^{-l\beta-\eps,\infty}([0,T]) }<\infty,\quad l=1,2,3. 
			$$
		\end{enumerate}
	\end{prop}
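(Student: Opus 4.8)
The plan is to run the by-now classical invariant-measure argument of Bourgain \cite{Bo, STz}: a probabilistic local well-posedness for the remainder in the decomposition $v_N = \<1>_N + w_N$, globalized using the invariance of the truncated Gibbs measures together with their convergence (Theorem~\ref{th:main_measure}). \textbf{Step 1 (probabilistic local theory for the remainder).} Fix $\alpha \in (\frac 89,1)$, $0<\eps\ll 1$, and set $s=s_0-\eps = 4\alpha-3-\eps>0$. Writing the solution of \eqref{cubicN} as $v_N=\<1>_N+w_N$ with $\<1>_N=\Pi_N\sS(t)\vec\phi$, the remainder solves the Duhamel equation
\[
  w_N(t) = -\,\Pi_N\mathcal{J}_0\Big(\kappa\,(\<1>_N+w_N)+4\ov{a}_2\,(\<1>_N+w_N)^{\diamond 3}\Big),
\]
whose right-hand side expands into the linear terms and the four multilinear contributions $\<1>_N^{\diamond 3}$, $\<1>_N^{\diamond 2}w_N$, $\<1>_N w_N^2$, $w_N^3$. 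I would solve this by a contraction mapping in $Y^s([0,\tau])$, estimating the Duhamel integral of each term via the Strichartz inequality \eqref{Strichartz2d} and the trilinear bound of Lemma~\ref{le:wave_cubic}, so that the right-hand side is controlled by a product of the stochastic norms $\|\<1>_N^{\diamond l}\|_{L_t^q W_x^{-l\beta-\eps,\infty}([0,1])}$ ($l=1,2,3$) and powers of $\|w_N\|_{Y^s([0,\tau])}$, with an extra gain $\tau^{3/4}$ from H\"older in time. By hypercontractivity (Proposition~\ref{Wienerchaos}) together with a union bound over dyadic frequency blocks, the event $\Omega_R := \{\,\|\<1>_N^{\diamond l}\|_{L_t^q W_x^{-l\beta-\eps,\infty}([0,1])}\le R,\ l=1,2,3\,\}$ has $\vec\mu$-probability at least $1-e^{-cR^{c'}}$ uniformly in $N$; on $\Omega_R$ the contraction closes on $[0,\tau]$ with $\tau\sim R^{-4/3}$ independent of $N$, producing a unique $w_N\in Y^s([0,\tau])$. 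Running the same estimate on $w_N-w_M$ and using $\<1>_N^{\diamond l}\to\<1>^{\diamond l}$ in $L_t^q W_x^{-l\beta-\eps,\infty}$ shows $(w_N)$ is Cauchy in $Y^s([0,\tau])$ with limit $w$; since $\<1>_N\to\<1>$ in $C([0,T];H^{-\beta-\eps})$, this gives the local versions of (1) and of the Duhamel identity in (4), while the final bound on $\<1>(\phi)^{\diamond l}$ in (4) is exactly the defining inequality of $\Omega_R$ with $q=10m$.

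\textbf{Step 2 (globalization).} The local time $\tau$ degrades as $\|\vec\phi\|$ grows and the cubic flow is only conditionally stable, so Step 1 cannot be iterated deterministically; instead I use the measure. By Theorem~\ref{th:main_measure} and $\ov{a}_2>0$, the densities $\frac{{\rm d}\widetilde\nu_N}{{\rm d}\mu}$ are bounded uniformly in every $L^p(\mu)$ and converge to the density of the fractional $\Phi^4_2$ measure $\nu$; moreover $\widetilde\nu_N\otimes\mu_N'$ is invariant under \eqref{cubicN}. A standard Bourgain argument (as in \cite{Bo, STz}) then produces, for each $T>0$ and $\delta>0$, a set $\Sigma_{T,\delta}$ with $\vec\mu(\Sigma_{T,\delta}^c)<\delta$ on which the truncated flows $\vec\Phi_N$ exist up to time $T$ with remainders bounded in $Y^s([0,T])$ and converge in $C([0,T];\mathcal{H}^{-\beta-\eps})$; taking $\Sigma_0 := \bigcup_{n\ge1}\bigcap_{T\in\N,\ T\le n}\Sigma_{T,2^{-n}}$ gives $\vec\mu(\Sigma_0)=1$ and a globally defined limit flow $\vec\Phi(t)$ on $\Sigma_0$, which is (1). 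The flow property and $\vec\Phi(t)(\Sigma_0)=\Sigma_0$ in (2) follow from the construction together with the reversibility of \eqref{cubicN} and the invariance of $\vec\mu$ under the finite-dimensional truncated flows, and the invariance of $\vec\nu$ in (3) is obtained by passing to the limit in the invariance of $\widetilde\nu_N\otimes\mu_N'$, combining the $L^p(\mu)$-convergence of the densities with the $C([0,T];\mathcal{H}^{-\beta-\eps})$-convergence $\vec\Phi_N\to\vec\Phi$ from Step 1.

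\textbf{Main obstacle.} The crux is the trilinear estimate of Lemma~\ref{le:wave_cubic} and the regularity budget it leaves. The purely stochastic term $\mathcal{J}_0\<1>^{\diamond 3}$ is the worst: $\<1>^{\diamond 3}$ only lies in $\mathcal{C}^{-3\beta-\eps}$ for every $\eps>0$, so the $\alpha$ derivatives gained by the Duhamel operator place it in $H^{s}$ with $s=4\alpha-3-\eps=s_0-\eps$ and no better, which is why the remainder must be sought at this regularity; for the contraction to close one then needs the auxiliary Strichartz component of the $Y^s$ norm to be available at this regularity, i.e.\ $s_0>\frac{2-\alpha}{2+\theta_0}$, and letting $\theta_0\to0$ this is exactly $4\alpha-3>\frac{2-\alpha}{2}$, i.e.\ $9\alpha>8$ --- the source of the restriction $\alpha>\frac 89$. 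The mixed terms $\<1>^{\diamond 2}w$ and $\<1> w^2$ are the technically most delicate: one must split them by paraproducts and bound the resonant and high--low interactions of a $\mathcal{C}^{-2\beta-\eps}$ (resp.\ $\mathcal{C}^{-\beta-\eps}$) distribution against an $H^s$ function using the fractional Leibniz rule (Lemma~\ref{le:fractional_Leibniz}), Gagliardo--Nirenberg interpolation (Proposition~\ref{pr:general_Sobolev}) and the Strichartz smoothing; here it is essential that $w$ is measured in $L_t^{2+\theta_0}L_x^{2+\frac{4}{\theta_0}}$ and not merely in $L_t^\infty H^s$, since $H^s\not\hookrightarrow L^\infty(\T^2)$ for $s<1$ and the energy norm alone does not close the estimate. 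Accordingly I would carry out Step 1 in full detail and merely invoke \cite{Bo, STz} for the (routine, once the measures converge) globalization of Step 2.
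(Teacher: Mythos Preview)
Your proposal is correct and follows essentially the same route as the paper's (sketched) argument: the Bourgain recentering $v_N=\<1>_N+w_N$ with a contraction for $w_N$ in $Y^s([0,\tau])$ via Strichartz \eqref{Strichartz2d} and Lemma~\ref{le:wave_cubic}, $\tau\sim R^{-4/3}$, on a set of $\vec\mu$-probability $\geq 1-e^{-cR^{c'}}$, followed by the invariant-measure globalization from \cite{Bo,STz} (which the paper likewise only invokes). One minor remark: Lemma~\ref{le:wave_cubic} treats the mixed terms $\<1>^{\diamond 2}w$ and $\<1> w^2$ directly by duality, fractional Leibniz, and Sobolev embedding rather than an explicit paraproduct decomposition, and the convergence of the truncated cubic Gibbs measures $\widetilde\nu_N$ is the easy Nelson-type special case rather than Theorem~\ref{th:main_measure} itself, but neither point affects the validity of your outline.
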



	\subsection{Convergence of higher order systems}
	Now we study the dynamical weak universality problem by proving the following result which leads to Theorem \ref{thm:dynamics}:
		\begin{prop}\label{convergenceN}
		Let $T>0,\alpha\in \big(\frac{8}{9},1\big), 0<\eps\ll 1$. Let $s=s_0-\eps$ and $s_1=s-2\eps$. Then there exists a full $\vec{\mu}$ measure set $\Sigma\subset\mathcal{H}^{-\beta-\eps}$, such that for any $\vec{\phi}\in\Sigma$, the solutions $\vec{u}_N(t)=\vec{\Phi}_N(t)\vec{\phi}$ of \eqref{Universality-N} admit a decomposition $\vec{u}_N(t)=\vec{\<1>}_N(t)+\vec{w}_N(t)$ and converge in $C([0,T];\mathcal{H}^{-\beta-\eps})$ to the solution  $\vec{\Phi}(t)\vec{\phi}$ of the cubic equation constructed in Proposition \ref{th:wave_cubicdetailed}. Moreover, the nonlinear remainders $w_N(t)$ converge in a smoother space:
		$$ \lim_{N\rightarrow\infty}\|w_N(t)-w(t)\|_{L_{t}^{\infty}H_x^{s_1}([0,T]) }=0.
		$$
	\end{prop}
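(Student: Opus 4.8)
The plan is to compare \eqref{Universality-N'} with the cubic equation \eqref{cubic} through a uniform-in-$N$ local stability estimate, and then to promote this to the interval $[0,T]$ by Bourgain's invariant-measure globalisation; the new point is that the high-degree part of $W_N'$ has to be shown to be negligible thanks to the prefactors $N^{-(2j-4)\beta}$. Writing $\vec u_N=\vec{\<1>}_N+\vec w_N$, the additive property \eqref{additive} (with $w_N$ treated as deterministic) gives $H_{2j-1}(\<1>_N+w_N;\widetilde\sigma_N^2)=\sum_{l=0}^{2j-1}\binom{2j-1}{l}\<1>_N^{\diamond l}w_N^{2j-1-l}$, so that $w_N$ satisfies
\[
w_N=-\Pi_N\mathcal J_0\Big(\kappa_N(\<1>_N+w_N)+4\ov{a}_{2,N}\big(\<1>_N^{\diamond 3}+3\<1>_N^{\diamond 2}w_N+3\<1>_Nw_N^2+w_N^3\big)+\mathcal R_N\Big),
\]
with $\mathcal R_N:=\sum_{j=3}^m2j\,\ov{a}_{j,N}N^{-(2j-4)\beta}\sum_{l=0}^{2j-1}\binom{2j-1}{l}\<1>_N^{\diamond l}w_N^{2j-1-l}$. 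Comparing with the Duhamel equation of Proposition~\ref{th:wave_cubicdetailed}(4), the difference $w_N-w$ is driven by four effects: (i) $\kappa_N\to\kappa$, with $|\kappa_N-\kappa|\lesssim N^{-(2\alpha-1)}$ by Proposition~\ref{convergeneclinear}; (ii) $\ov{a}_{2,N}\to\ov{a}_2$ and $\Pi_N\to\mathrm{Id}$; (iii) the cubic terms, which are Lipschitz in $(w_N,w)$ on bounded sets of $\mathcal Y^s$; (iv) the remainder $\mathcal R_N$, which must tend to $0$ in $L^1_tH^{s_1-\alpha}$.

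For the local step I would show: there are $c,c'>0$ such that on the event $\Omega_R:=\{\|\<1>_N^{\diamond l}\|_{L^q_tW^{-l\beta-\eps,\infty}_x([0,1])}\le R,\ 1\le l\le 2m-1\}$, which has $\mu$-measure $\ge1-e^{-cR^{c'}}$ by large-deviation bounds (Proposition~\ref{Wienerchaos}), and with $\tau\sim R^{-4/3}$, one has on any $[t_0,t_0+\tau]\subset[0,1]$ with $\|(w_N,\d_t w_N)(t_0)\|_{\mathcal H^s}\le R$ a solution of \eqref{Universality-N'} with $\|\vec w_N\|_{\mathcal Y^s}\lesssim R$, with $\|\mathcal R_N\|_{L^1_tH^{s_1-\alpha}}\lesssim N^{-c}\mathrm{poly}(R)$, and with the analogous $\mathcal Y^{s_1}$-closeness of $w_N$ to the corresponding cubic remainder. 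For the cubic terms this is Lemma~\ref{le:wave_cubic}, which is where $\alpha>\tfrac89$ enters (via the term $\<1>w^2$). For $\mathcal R_N$ I would bound each summand $N^{-(2j-4)\beta}\<1>_N^{\diamond l}w_N^{2j-1-l}$ by placing $\<1>_N^{\diamond l}$ in $\mathcal C^{-l\beta-\eps}$ with the stochastic bound (no frequency loss), distributing the $\approx l\beta$ derivatives on $w_N^{2j-1-l}$ through Lemma~\ref{le:fractional_Leibniz} and Proposition~\ref{pr:general_Sobolev}, and absorbing the remaining $L^p$-norms of $w_N$ via a priori bounds that follow from the invariance of $\vec\nu_N$; the cutoff $\Pi_N$, present throughout, converts the rough negative regularities into powers of $N$ that are dominated by $N^{-(2j-4)\beta}$, the balance being favourable for all $\alpha>\tfrac89$ regardless of $m$.

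For globalisation I would use that $\vec\nu$ is invariant under $\vec\Phi(t)$ (Proposition~\ref{th:wave_cubicdetailed}), that $\vec\nu_N$ is invariant under the finite-dimensional flow \eqref{Universality-N'}, and that $\vec\nu_N\to\vec\nu$ with $L^p$-bounded densities (Theorem~\ref{th:main_measure}). Restarting the free evolution at the times $k\tau$ — on $[k\tau,(k{+}1)\tau]$ write $\vec u_N=\vec{\sS}(\cdot-k\tau)\vec u_N(k\tau)+\vec{\tilde w}^{(k)}_N$ with $\vec{\tilde w}^{(k)}_N(k\tau)=0$ — the invariance gives $\vec u_N(k\tau)\sim\vec\nu_N$, and since $\vec\nu_N\ll\vec\mu_N$ with uniformly bounded density the stochastic bounds for these restarted linear evolutions (and their Wick powers) hold, up to a $p$-loss, on an event of probability $\ge1-e^{-cR^{c'}}$; a union bound over the $O(T/\tau)$ restarts yields $\Sigma_{N,R,T}$ of $\vec\mu$-measure $\ge1-C_Te^{-cR^{c'}}$ on which the local step iterates up to time $T$, with $\|\vec w_N\|_{\mathcal Y^s([0,T])}\lesssim_{T,R}1$ and total remainder error $O\big((T/\tau)N^{-c}\mathrm{poly}(R)\big)=o_{N\to\infty}(1)$, hence $\|w_N-w\|_{\mathcal Y^{s_1}([0,T])}\to0$. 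Since $\vec{\<1>}_N\to\vec{\<1>}$ trivially in $C([0,T];\mathcal H^{-\beta-\eps})$ and $\mathcal H^{s_1}\hookrightarrow\mathcal H^{-\beta-\eps}$, this also yields $\vec u_N\to\vec\Phi(\cdot)\vec\phi$ in $C([0,T];\mathcal H^{-\beta-\eps})$; letting $R\to\infty$ along a sequence and applying Borel--Cantelli produces the full-$\vec\mu$-measure set $\Sigma$.

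The main obstacle is the control of $\mathcal R_N$ in the local step: there is no usable local Cauchy theory for the undamped high-power equation — it is supercritical, even with respect to the probabilistic scaling, for large $m$ — so the smallness of $\mathcal R_N$ must be extracted solely from the interplay of the prefactors $N^{-(2j-4)\beta}$, the truncation $\Pi_N$, the dispersive estimates and the $\vec\nu_N$-a priori bounds, and this must be done uniformly in $m$ while keeping the threshold at $\alpha>\tfrac89$. This is precisely where the heuristic that negative powers of $N$ beat high powers of singular objects must be realised with dispersive, rather than merely Sobolev, tools.
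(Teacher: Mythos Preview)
Your high-level plan—decompose $u_N=\<1>_N+w_N$, compare with the cubic remainder via Strichartz/product estimates, and globalise through invariance—is the paper's plan. But your concrete scheme for $\mathcal R_N$ has a genuine gap, and your globalisation differs from the paper's in a way that matters for closing it.

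\textbf{The remainder $\mathcal R_N$.} You propose to place $\<1>_N^{\diamond l}$ in $\mathcal C^{-l\beta-\eps}$ (uniformly in $N$) and absorb $l\beta$ derivatives on $w_N^{2j-1-l}$. This cannot close uniformly in $m$: for instance the term $\<1>_N^{\diamond (2j-2)}w_N$ (your $l=2j-2$) with $j\ge4$ requires $w_N\in W^{(2j-2)\beta+,\,p}$, and for $\alpha$ near $\tfrac89$ one has $(2j-2)\beta>s_1$ already at $j=4$; no product estimate rescues this without losing powers of $N$ that are \emph{not} dominated by $N^{-(2j-4)\beta}$. The paper splits the remainder according to the power of $w_N$ (its $l$), and treats the two regimes differently. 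For $l\le3$ it places $\<1>_N^{\diamond(2j-1-l)}$ not in its natural space but in $W^{-(3-l)\beta-2\eps,\,1/\eps}$, accepting the growth $N^{(2j-4)\beta-\eps}$, which the prefactor cancels with a net $N^{-\eps}$. For $l\ge4$ it puts $\<1>_N^{\diamond(2j-1-l)}$ and $l-4$ copies of $w_N$ in $L^\infty$ with the crude bounds $N^{(2j-1-l)\beta+\eps}$ and $N^{(l-4)(\beta+2\eps)}$, three copies of $w_N$ in $H^{s_1}$ via Lemma~\ref{le:wave_cubic}, and \emph{one} copy of $w_N$ in the Strichartz space $L_t^{2+\theta_0}L_x^{2+4/\theta_0}$; the arithmetic $-(2j-4)\beta+(2j-1-l)\beta+(l-4)\beta=-\beta$ then gives $N^{-\beta+O(m\eps)}$. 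Without the single Strichartz factor the count would only give $N^{O(m\eps)}$—this is precisely the dispersive input your last paragraph anticipates, but your concrete scheme does not realise it.

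\textbf{Globalisation.} You propose Bourgain's restarting (recentre the linear piece at each $k\tau$). The paper instead runs a Bourgain--Bulut argument: Lemma~\ref{newlargedeviation} uses the invariance of $\vec\nu_N$ (and the $L^p$-bounded density from Theorem~\ref{th:main_measure}) to produce directly a \emph{global} bound $\|\vec\Phi_N(t)\vec\phi\|_{L^{10m}_t\mathcal W^{-\beta-\eps,\infty}_x([0,T])}\le R$, which together with Bernstein gives $\|w_N\|_{L^{10m}_tL^\infty_x}\lesssim N^{\beta+2\eps}R$—the crucial input for the $L^\infty$-counting above. The cubic remainder $w$ is controlled globally by Proposition~\ref{th:wave_cubicdetailed}, and the comparison is then a bootstrap on $\|w_N\|_{Y^{s_1}}$ over intervals of length $\tau_0\sim R^{-100m}$ (not $R^{-4/3}$), with $R=R_N=(\log N)^{\theta}$ so that the iteration loss $K_0^{T/\tau_0}$ is beaten by $N^{-\eps/2}$. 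Your restarting scheme changes the decomposition at each step (the recentred linear piece is not $\<1>_N$), so the statement about the \emph{global} $w_N=u_N-\<1>_N$ would need an extra resummation; more importantly, it does not by itself supply the global $L^\infty$ bound on $w_N$ that the $\mathcal R_N$ estimate needs.
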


	 The main ingredient to prove the almost sure convergence of \eqref{Universality-N'} to \eqref{cubicN} in $C([0,T];H^{-\beta-\eps}(\T^2))$ is a variant of the Bourgain-Bulut type argument (\cite{BB}). Briefly, we will use two global information, the first one is the invariance of measures $\vec{\nu}_N$ along the truncated flow $\vec{\Phi}_N(t)$. This will allow us to essentially control the $L_x^{\infty}$ norm of the solution $\vec{\Phi}_N(t)\vec{\phi}$ by $N^{\beta+}$. The second one is the solution of the cubic equation, thanks to Proposition \ref{th:wave_cubicdetailed}. Technically, since we deal with solutions in the space of negative regularity, it would be more convenient to work with the nonlinear part of the flow that leaves in the spaces of positive regularity.
	
	Writing
	$$ u_N=\<1>_N+w_N,
	$$
	we expand the nonlinearity $\kappa_N\Pi_Nu_N+ \Pi_N W_N'(\Pi_N u_N)$ as
	\begin{align}\label{decomposition}  \kappa_N(\<1>_N+w_N)+4\sum_{l=0}^3 \binom{3}{l}\ov{a}_{2,N}w_N^l\<1>_N^{\diamond 3-l}+\sum_{l=0}^{2m-1}\mathcal{R}_{N,l}w_N^l,
	\end{align}
	where
	\begin{align}\label{RlN} 
		\mathcal{R}_{N,l}=\sum_{j = 2 \vee (\lfl \frac{\ell}{2} \rfl + 1)}^{m}\frac{(2j)!}{l!(2j-l-1)!}\ov{a}_{j,N}N^{-(2j-4)\beta}\<1>_N^{\diamond (2j-l-1)}.
	\end{align}

	\subsubsection{Large deviation estimates}
	First, we prove the following lemma that allows us to pass from $\nu_N$ measure to $\mu$:
	\begin{lem}\label{large-deviationVN}
		For any $R>0$ and $N\in\N$,
		$$ \mu\Big\{\phi:\Big|\int_{\T^2}V_N(\Pi_N\phi)dx\Big|>R \Big\}\leq e^{-cR^{\frac{1}{2m}}}.
		$$
	\end{lem}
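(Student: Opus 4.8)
The plan is to prove a uniform-in-$N$ bound $\sup_N\|X_N\|_{L^2(\mu)}<\infty$ for $X_N := \int_{\T^2} V_N(\Pi_N\phi)\,{\rm d}x$, promote it to all $L^p(\mu)$ via hypercontractivity, and close with the usual Chebyshev-and-optimise argument. Writing $\phi_N:=\Pi_N\phi$ and taking Wick powers with respect to the Gaussian structure of $\mu$, I would split
$$ X_N \;=\; \overline{a}_{1,N}N^{2\beta}\int_{\T^2}\phi_N^{\diamond 2}\,{\rm d}x \;+\; \int_{\T^2} W_N(\phi_N)\,{\rm d}x \;=:\; X_N^{(1)}+X_N^{(2)}, $$
so that $X_N^{(1)}\in\mathcal{H}_2$, $X_N^{(2)}\in\bigoplus_{j=2}^m\mathcal{H}_{2j}$, and hence $X_N\in\bigoplus_{k\le 2m}\mathcal{H}_k$.

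For $X_N^{(2)}$ the convergence $X_N^{(2)}\to\lambda\int_{\T^2}\phi^{\diamond 4}\,{\rm d}x$ in $L^2(\mu)$ is exactly the hypercontractivity statement recorded just before Proposition~\ref{pr:uniform_bd_density} (this is where $\alpha>\frac{3}{4}$ is used: the negative powers $N^{-(2j-4)\beta}$ beat the growth of the truncated covariance kernel), so $\sup_N\|X_N^{(2)}\|_{L^2(\mu)}<\infty$. For $X_N^{(1)}$ the standard second-moment computation gives $\E^\mu\big|\int_{\T^2}\phi_N^{\diamond 2}\,{\rm d}x\big|^2\lesssim\sum_{|k|\le N}(1+|k|^{2\alpha})^{-2}$, which is bounded uniformly in $N$ since $4\alpha>2$; moreover $\overline{a}_1=0$ by Assumption~\ref{as:V}(1), so Proposition~\ref{convergeneclinear} yields $\overline{a}_{1,N}N^{2\beta}=\lambda_0+O(N^{2\beta-1})+O(N^{-2\beta})=O(1)$, using $\beta=1-\alpha<\frac{1}{2}$. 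Altogether $\sup_N\|X_N\|_{L^2(\mu)}\le C$ with $C=C(m,V,\alpha)$ independent of $N$.

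By Proposition~\ref{Wienerchaos} (with $k=2m$) we then get $\|X_N\|_{L^p(\mu)}\le(p-1)^m\|X_N\|_{L^2(\mu)}\le C(p-1)^m$ for every $p\ge 2$, uniformly in $N$. Markov's inequality gives, for each such $p$,
$$ \mu\big\{\phi:\ |X_N|>R\big\}\le R^{-p}\,\E^\mu|X_N|^p\le\Big(\frac{C(p-1)^m}{R}\Big)^p. $$
Choosing $p=1+(R/(eC))^{1/m}$, which is $\ge 2$ once $R\ge R_0:=eC$, makes the right-hand side $\exp\!\big(-(R/(eC))^{1/m}\big)$; for $R<R_0$ the bound is trivial after shrinking the constant. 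Since $R^{1/m}\ge R^{1/(2m)}$ for $R\ge 1$, this proves the claim (indeed with the stronger exponent $R^{1/m}$). The only substantive point is the uniform-in-$N$ second-moment control, namely that the negative powers $N^{-(2j-4)\beta}$ together with the renormalised coefficient $\overline{a}_{1,N}N^{2\beta}=O(1)$ exactly absorb the divergence of the truncated covariance; but this has already been established in Propositions~\ref{pr:uniform_bd_density} and~\ref{convergeneclinear}, so the lemma is a short corollary.
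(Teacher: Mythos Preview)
Your proof is correct and follows essentially the same route as the paper: bound $\|X_N\|_{L^2(\mu)}$ uniformly in $N$, lift to $L^p$ via the Wiener-chaos estimate (Proposition~\ref{Wienerchaos}), then Chebyshev with optimised $p$. The paper computes the second moment directly via the Wick covariance identity and the convolution bound (Lemma~\ref{lem:convolutionBasic}), whereas you split off the $j=1$ chaos and appeal to Proposition~\ref{convergeneclinear} for $\overline{a}_{1,N}N^{2\beta}=O(1)$ and to the $L^2$-convergence statement preceding Proposition~\ref{pr:uniform_bd_density} for the $j\ge 2$ part; this is only a presentational difference. One small imprecision: your final sentence cites Proposition~\ref{pr:uniform_bd_density} for the second-moment control, but that proposition concerns exponential moments---the relevant input is the $L^2$-convergence asserted just before it, which you already invoked correctly earlier.
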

	\begin{proof}
		Since $\int_{\T^2}V_N(\Pi_N\phi)dx$ is a linear combination of multi-linear Gaussians of degree smaller than or equal to $2m$, by the Wiener-chaos estimate
		\begin{align}\label{EmuLp} 
			\Big(\mathbb{E}^{\mu}\Big[\Big|\int_{\T^2}V_N(\Pi_N\phi)dx\Big|^p\Big]\Big)^{\frac{1}{p}}\leq Cp^m\Big(\mathbb{E}^{\mu}\Big[\Big|\int_{\T^2}V_N(\Pi_N\phi)dx\Big|^2\Big]\Big)^{\frac{1}{2}}
		\end{align}
		for any $p\geq 2$. Using the identity (see \eqref{esperance-scalaire}) 
		$$ \mathbb{E}^{\mu}[(\Pi_N\phi)^{\diamond k}(x)\cdot (\Pi_N\phi)^{\diamond j}(y)]=k!\delta_{kj}\Big(\sum_{|k|\leq N}\frac{1}{\langle k\rangle^{2\alpha}}\mathrm{e}^{ik\cdot(x-y)}\Big)^4,
		$$
		we deduce that
		\begin{align*}
			\mathbb{E}^{\mu}\Big[\Big|\int_{\T^2}V_N(\Pi_N\phi)dx \Big|^2 \Big]=&\sum_{l=1}^m|\ov{a}_{l,N}|^2N^{-4(l-2)\beta}l!\cdot\sum_{\substack{ 
					k_1+k_2+k_3+k_4=0\\
					|k_j|\leq N
				}
			}\prod_{j=1}^4\frac{1}{\langle k_j\rangle^{2\alpha}}.
		\end{align*}
		By Lemma~\ref{lem:convolutionBasic} and the fact that $\alpha>\frac{3}{4}$, the quantity
		$$ \sum_{\substack{ 
				k_1+k_2+k_3+k_4=0\\
				|k_j|\leq N
			}
		}\prod_{j=1}^4\frac{1}{\langle k_j\rangle^{2\alpha}}
		$$
		is uniformly bounded in $N$. This implies that the right hand side of \eqref{EmuLp} is bounded by $Cp^m$. The desired estimate then follows from the Chebyshev's inequality. 
	\end{proof}
	The following Lemma crucially uses the invariance of the measure $\vec{\nu_N}$, in the spirit of Bourgain-Bulut:
	\begin{lem}\label{newlargedeviation}
		Let $T>0,\gamma>\beta=1-\alpha$ and $2\leq q,r<\infty$. There exist two positive constants $C_{T,\gamma,q,r},c_{T,\gamma,q,r}$ such that for all $\lambda>1$, $M<N$,
		\begin{align*}
			\vec{\mu}\Big(\{\vec{\phi}: \|\pi_M^{\perp}\vec{\Phi}_N(t)\vec{\phi}\|_{L_t^q\mathcal{W}_x^{-\gamma,r}([0,T]) }>\lambda  \}\Big)\leq C_{T,\gamma,q,r}\exp\big(-(T^{-\frac{1}{q}}M^{\gamma-\beta}\lambda)^{c_{T,\gamma,q,r}}\big).
		\end{align*}
		Here $\pi^{\perp}_M=\mathrm{Id}-\pi_M$ and $\pi_M$ is some smooth cutoff\footnote{The same statement holds if we replace the smooth cutoff $\pi_M$ by $\Pi_M$. Here we state the lemma with $\pi_M$ since $\Pi_M$ is not bounded in $L^p(\T^2)$, $1<p<\infty$. }.  
	\end{lem}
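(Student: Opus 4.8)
The plan is to run the Bourgain--Bulut invariant-measure scheme, shuttling the estimate between $\vec\mu$ (where everything is an explicit Gaussian), the invariant measure $\vec\nu_N$ (where we may use that $\Law_{\vec\nu_N}(\vec\Phi_N(t)\vec\phi)=\vec\nu_N$ to kill the $t$-dependence), and back to $\vec\mu$. Write $A=\{\vec\phi:\|\pi_M^{\perp}\vec\Phi_N(\cdot)\vec\phi\|_{L_t^q\mathcal W_x^{-\gamma,r}([0,T])}>\lambda\}$. First I would pass from $\vec\mu$ to $\vec\nu_N$: both $A$ and $\int_{\T^2}\widetilde{V_N}(\Pi_N\phi)\,{\rm d}x$ depend only on the frequencies $\le N$ of $\vec\phi$, and $\vec\nu_N$ has density $\zZ_N^{-1}e^{-\int_{\T^2}\widetilde{V_N}(\Pi_N\phi){\rm d}x}$ with respect to $\vec\mu$. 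Splitting $A$ according to whether $|\int_{\T^2}\widetilde{V_N}(\Pi_N\phi){\rm d}x|\le R$ or not, on the first event $e^{-\int\widetilde{V_N}}\ge e^{-R}$ so that $\one_A\le e^{R}\zZ_N\,({\rm d}\vec\nu_N/{\rm d}\vec\mu)$ there, while on the complement we invoke Lemma~\ref{large-deviationVN}, whose proof applies verbatim with $\widetilde{V_N}$ in place of $V_N$ since the extra Wick-quadratic term does not raise the degree. Using $\sup_N\zZ_N<\infty$ from Theorem~\ref{th:main_measure}, this gives
\[
\vec\mu(A)\ \le\ C_0\,e^{R}\,\vec\nu_N(A)\ +\ C\,e^{-cR^{\frac{1}{2m}}}.
\]

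Next I would bound $\vec\nu_N(A)$. Chebyshev at an exponent $p\ge\max(q,r)$ and Minkowski's integral inequality in $L^{p/q}(\vec\nu_N)$ bring the expectation inside the $t$-integral, and the invariance of $\vec\nu_N$ under $\vec\Phi_N(t)$ removes the time variable altogether:
\[
\vec\nu_N(A)\ \le\ \lambda^{-p}\,\E^{\vec\nu_N}\Big[\Big(\int_0^T\|\pi_M^{\perp}\vec\Phi_N(t)\vec\phi\|_{\mathcal W^{-\gamma,r}_x}^{q}\,{\rm d}t\Big)^{p/q}\Big]\ \le\ \lambda^{-p}\,T^{p/q}\,\E^{\vec\nu_N}\big[\|\pi_M^{\perp}\vec\phi\|_{\mathcal W^{-\gamma,r}_x}^{p}\big].
\]
To evaluate the last expectation I would return to $\vec\mu$ via Cauchy--Schwarz, $\E^{\vec\nu_N}[F]\le\zZ_N^{-1}(\zZ_N^{(2)})^{1/2}(\E^{\vec\mu}[F^2])^{1/2}$, with $\sup_N(|\log\zZ_N|+|\log\zZ_N^{(2)}|)<\infty$ by Theorem~\ref{th:main_measure}/Proposition~\ref{pr:uniform_bd_density}. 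It then remains to estimate a Gaussian moment: since $\pi_M^{\perp}$ localizes to $\{|k|\gtrsim M\}$ and $\gamma>\beta$,
\[
\E^{\vec\mu}\big|\langle\nabla\rangle^{-\gamma}\pi_M^{\perp}\phi(x)\big|^2+\E^{\vec\mu}\big|\langle\nabla\rangle^{-\gamma-\alpha}\pi_M^{\perp}\psi(x)\big|^2\ \lesssim\ \sum_{|k|\gtrsim M}\langle k\rangle^{-2\gamma-2\alpha}\ \sim\ M^{-2(\gamma-\beta)}
\]
uniformly in $x$, which is precisely where the hypothesis $\gamma>\beta$ enters (it makes the series converge). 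As these pointwise quantities are Gaussian, Minkowski in $x$ together with scalar Gaussian moments (or Proposition~\ref{Wienerchaos}) yields $(\E^{\vec\mu}\|\pi_M^{\perp}\vec\phi\|_{\mathcal W^{-\gamma,r}}^{P})^{1/P}\lesssim\sqrt P\,M^{-(\gamma-\beta)}$ for $P\ge p_0(r)$. Combining and writing $X:=T^{-\frac1q}M^{\gamma-\beta}\lambda$, we arrive at $\vec\nu_N(A)\le(C\sqrt p\,X^{-1})^{p}$ for every $p\ge p_0(q,r)$.

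Finally, I would feed this back into the first display with the choice $R=p$, obtaining $\vec\mu(A)\le(eCC_0^{1/p}\sqrt p\,X^{-1})^{p}+C\,e^{-cp^{\frac{1}{2m}}}$, and then optimize: taking $p$ a small multiple of $X^2$ (precisely $p=\lfl (X/(2eCC_0))^2\rfl$, so that the base is $\le\tfrac12$) makes the first term $\le 2^{-p}$, and both $2^{-p}$ and $e^{-cp^{\frac{1}{2m}}}$ are then bounded by $\exp(-c'X^{\frac1m})$; for $X$ in a bounded range the asserted estimate is trivial after enlarging $C_{T,\gamma,q,r}$. This gives the claim, with the exponent $c_{T,\gamma,q,r}$ being any fixed power $\le\frac{1}{2m}$. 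The identical argument works with $\Pi_M$ in place of $\pi_M$, since the Gaussian computation above is purely $L^2$-based and never uses boundedness of the cutoff on $L^p$.

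The main obstacle is the transfer between $\vec\mu$ and $\vec\nu_N$: the density ${\rm d}\vec\nu_N/{\rm d}\vec\mu$ is unbounded and lies in $L^p(\vec\mu)$ only with the favorable sign, so $\vec\mu$ cannot be dominated by $\vec\nu_N$ directly; the two-sided truncation of $\int_{\T^2}\widetilde{V_N}(\Pi_N\phi){\rm d}x$ combined with the uniform partition-function bounds of Theorem~\ref{th:main_measure} is exactly what makes both directions of the comparison legitimate uniformly in $N$. A secondary, purely bookkeeping, point is that the tail of Lemma~\ref{large-deviationVN} decays only like $\exp(-cR^{\frac{1}{2m}})$, so the final rate is a fractional power of $X=T^{-\frac1q}M^{\gamma-\beta}\lambda$ rather than $X$ (or $X^2$) itself; since $m$ is fixed this is harmless, and it is the source of the parameter $c_{T,\gamma,q,r}$ in the statement.
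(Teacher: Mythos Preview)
Your proof is correct and follows essentially the same Bourgain--Bulut scheme as the paper: split according to the size of the potential, pass to $\vec\nu_N$ using the density and Lemma~\ref{large-deviationVN}, apply Chebyshev/Minkowski and the invariance of $\vec\nu_N$ under $\vec\Phi_N(t)$ to remove the time dependence, return to $\vec\mu$ via Cauchy--Schwarz and the uniform partition function bounds, and conclude with the Gaussian moment computation and optimization. The only cosmetic differences are that the paper works with $V_N$ rather than $\widetilde{V_N}$ in the splitting (harmless, as you note) and brings the expectation inside both the $t$- and $x$-integrals before invoking invariance pointwise, whereas you stop at the spatial norm; both are valid.
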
		
	
	\begin{proof}
		In the proof, we denote by $\langle\vec{\nabla}\rangle^{-\gamma}:=\langle\nabla\rangle^{-\gamma}\otimes \langle\nabla\rangle^{-\gamma-\alpha}$. The notation $L_t^q\mathcal{X}_x$ will stand for $L_t^q\mathcal{X}_x([0,T])$.
		
		Take a parameter $\lambda_1>0$ to be fixed later, we have
		\begin{align*}
			\vec{\mu}\Big\{\vec{\phi}:\|\pi_M^{\perp}\vec{\Phi}_N(t)\vec{\phi} \|_{L_t^q\mathcal{W}_x^{-\gamma,r} }>\lambda \Big\} \leq 
			&\underbrace{\vec{\mu}\Big\{\vec{\phi}:\|\pi_M^{\perp}\vec{\Phi}_N(t)\vec{\phi} \|_{L_t^q\mathcal{W}_x^{-\gamma,r} }>\lambda, \int_{\T^2}V_N(\Pi_N\phi)dx\leq \lambda_1 \Big\}}_{\mathrm{I}}\\
			+&\underbrace{\vec{\mu}\Big\{\vec{\phi}:\|\pi_M^{\perp}\vec{\Phi}_N(t)\vec{\phi} \|_{L_t^q\mathcal{W}_x^{-\gamma,r} }>\lambda, \int_{\T^2}V_N(\Pi_N\phi)dx> \lambda_1 \Big\}}_{\mathrm{II}}.
		\end{align*} 
		By Lemma \ref{large-deviationVN},
		\begin{align}\label{II} 
			\mathrm{II}\leq e^{-c\lambda_1^{\frac{1}{2m}}}.
		\end{align}
		To estimate I, we recall that $$\vec{\nu}_N(d\vec{\phi})=\frac{1}{\mathcal{Z}_N}e^{-\int_{\T^2}V_N(\Pi_N\phi)}\vec{\mu}(d\vec{\phi}),$$
		then
		\begin{align*}
			\mathrm{I}\leq \mathcal{Z}_Ne^{\lambda_1}\vec{\nu}_N\Big\{\vec{\phi}:\|\pi_M^{\perp}\vec{\Phi}_N(t)\vec{\phi} \|_{L_t^q\mathcal{W}_x^{-\gamma,r} }>\lambda \Big\}.
		\end{align*}
		Take $q_1\geq \max\{q,r\}$ to be specified, by Chebyshev's inequality and Minkowski's inequality, we have
		\begin{align*}
			\mathrm{I}\leq \frac{\mathcal{Z}_Ne^{\lambda_1}}{\lambda^{q_1}}\Big\|
			\Big(\int_{\mathcal{H}^{-\beta_{\eps}}} |\langle\vec{\nabla}\rangle^{-\gamma}\pi_M^{\perp}(\vec{\Phi}_N(t)\vec{\phi})  |^{q_1}\vec{\nu}_N(d\vec{\phi}) \Big)^{\frac{1}{q_1}}
			\Big\|_{L_t^qL_x^r}^{q_1}.
		\end{align*}
		By the invariance of $\vec{\nu}_N$ along $\vec{\Phi}_N(t)$, we deduce that, for a.e.$x\in\T$ and $t\in[0,T]$ (see Lemma 7.1 of \cite{STz} for a rigorous proof)
		$$ \int_{\mathcal{H}^{-\beta_{\eps}}} |\langle\vec{\nabla}\rangle^{-\gamma}\pi_M^{\perp}(\vec{\Phi}_N(t)\vec{\phi})  |^{q_1}(x)\vec{\nu}_N(d\vec{\phi})=\int_{\mathcal{H}^{-\beta_{\eps}}} |\langle\vec{\nabla}\rangle^{-\gamma}\pi_M^{\perp}(\vec{\phi})  |^{q_1}(x)\vec{\nu}_N(d\vec{\phi}).
		$$
		Hence 
		\begin{align*}
			\mathrm{I}\leq \frac{\mathcal{Z}_Ne^{\lambda_1}T^{\frac{q_1}{q}}}{\lambda^{q_1}}\Big\|
			\int_{\mathcal{H}^{-\beta_{\eps}}} |\langle\vec{\nabla}\rangle^{-\gamma}\pi_M^{\perp}\vec{\phi}  |^{q_1}\vec{\nu}_N(d\vec{\phi}) \Big\|_{L_x^{\frac{r}{q_1}}}.
		\end{align*}
		By Cauchy-Schwarz, the boundedness of $\mathcal{Z}_N,\mathcal{Z}_N^{-1}$ and Proposition \ref{pr:uniform_bd_density}, the above quantity can be controlled by
		$$ \frac{Ce^{\lambda_1}T^{\frac{q_1}{q}}}{\lambda^{q_1}}\Big\|\Big(\int_{\mathcal{H}^{-\beta_{\eps}}} |\langle\vec{\nabla}\rangle^{-\gamma}\pi_M^{\perp}\vec{\phi} |^{2q_1}\vec{\mu}(d\vec{\phi}) \Big)^{\frac{1}{2}}\Big\|_{L_x^{\frac{r}{q_1}}}\leq C^{q_1}T^{\frac{q_1}{q}}e^{\lambda_1}\frac{q_1^{\frac{q_1}{2}}M^{-(\gamma-\beta)q_1} }{\lambda^{q_1}}.
		$$
		So for any $q_1\geq q,r, \lambda_1\leq \lambda$, we have
		\begin{align*}
			\mathrm{I}+\mathrm{II}\leq e^{\lambda_1}\Big(\frac{CT^{\frac{1}{q}}\sqrt{q}M^{-(\gamma-\beta)} }{\lambda}\Big)^{q_1}+e^{-\lambda^{\frac{1}{2m}}}.
		\end{align*}
		By optimizing the choice of $\lambda_1,q_1$, we complete the proof of Lemma \ref{newlargedeviation}.
	\end{proof}	
The following Lemma consists of key arguments of the proof of Proposition \ref{convergenceN}.
	\begin{lem}\label{Firstcomparison}
		Let $T\geq 1$, $\eps\ll 1$. Let $R\gg 1, N\gg 1$ be large parameters.
		Assume that $\vec{\phi}\in\mathcal{H}^{-\beta-\eps}$ satisfies
		\begin{align}\label{globalbound} 
			\|\vec{\Phi}_N(t)\vec{\phi}\|_{L_t^{10m}\mathcal{W}_x^{-\beta-\eps,\infty}}\leq R,\quad \|\vec{\<1>}_N\|_{L_t^{10m}\mathcal{W}_x^{-\beta-\eps,\infty} }\leq R,
		\end{align}
		and 
		\begin{align*}
			&\|\<1>_N^{\diamond k}\|_{L_{t}^{10m}L_x^{\infty}}\leq N^{k\beta+\eps},\quad\quad\quad\quad\quad\quad\quad\quad\;\; \|\<1>_N^{\diamond l}\|_{L_{t}^{10m}W_x^{-l\beta-2\eps,\frac{1}{\eps}}  }\leq R,\\
			& 
		 \|\<1>_N^{\diamond (n-l-1)}\|_{L_t^{10m}W_x^{-(3-l)\beta-2\eps,\frac{1}{\eps}} }\leq N^{(n-4)\beta-\eps}, \quad
			\|\<1>_{N}^{\diamond l}-\<1>^{\diamond l} \|_{L_{t}^{10m}\mathcal{W}_x^{-l\beta-2\eps,\frac{1}{\eps}} }\leq N^{-\frac{\eps}{2}},
		\end{align*}
		for all $1\leq k\leq 2m-1$, $4\leq n\leq 2m-1$ and $l\in\{1,2,3\}$, where $L_t^{q}\mathcal{X}$ stands for $L^{q}([0,T];\mathcal{X})$.
		Moreover, assume that on $[0,T]$, for all $1\leq l\leq 3$,
		\begin{align}\label{cubicbd}  \sum_{l=1}^3\|\<1>^{\diamond l}\|_{L_t^{10m}W_x^{-l\beta-\eps,\infty}([0,T]) }+ \|\vec{\Phi}(t)\vec{\phi}-\vec{\<1>}\|_{\mathcal{Y}^s([0,T])} \leq R.
		\end{align}
		Then for any $\frac{2-\alpha}{2+\theta_0}<s_1=s_0-2\eps$\footnote{Under the constraint $\alpha\in(\frac{8}{9},1)$, for $0<\theta_0\ll \epsilon\ll 1$, $s_1>\frac{2-\alpha}{2+\theta_0}$. }, there exist constants $C=C_{m,\eps,\beta,s_1}>0$ and $K_0>0$, such that if the parameters $R, N$ satisfy the constraint
		$$ (K_0)^{TR^{100m}}<N^{\frac{\eps}{2}}, \text{ or equivalently, } R<\Big(\frac{\eps \log N}{2T\log K_0}\Big)^{\frac{1}{100m}},
		$$
		then
		$$ \|(\vec{\Phi}_N(t)\vec{\phi}-\vec{\<1>}_N)-(\vec{\Phi}(t)\vec{\phi}-\vec{\<1>})\|_{\mathcal{Y}^{s_1}([0,T])}
		\leq C_{\eps}N^{-\frac{\eps}{4}}.
		$$
	\end{lem}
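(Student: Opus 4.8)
The plan is to run a Bourgain--Bulut type iteration for the \emph{difference} of the nonlinear remainders of \eqref{Universality-N'} and \eqref{cubic} in the energy-Strichartz space $\mathcal{Y}^{s_1}$, using that both remainders start from zero data and that every way in which the two dynamics disagree is quantitatively small. Write $\vec u_N=\vec{\<1>}_N+\vec w_N$ and $\vec\Phi(t)\vec\phi=\vec{\<1>}+\vec w$ as in Proposition~\ref{th:wave_cubicdetailed}, and set $\vec d_N:=\vec w_N-\vec w$, so that $\vec d_N(0)=0$ and $\|\vec d_N\|_{\mathcal{Y}^{s_1}([0,T])}$ is exactly the quantity to bound. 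Subtracting the two Duhamel formulas and inserting the expansion \eqref{decomposition}, $\vec d_N$ solves a fractional wave equation with zero data whose forcing splits as $F_N=F_N^{\mathrm{lin}}+F_N^{\mathrm{err}}$: the part $F_N^{\mathrm{lin}}$ collects all contributions at least linear in $d_N$ (up to constants, $\kappa d_N$, $\<1>^{\diamond 2}d_N$, $\<1>(w_N+w)d_N$, $(w_N^2+w_Nw+w^2)d_N$, and their $\<1>_N$-counterparts), while $F_N^{\mathrm{err}}$ collects the terms that would vanish if the two dynamics coincided: $(\Pi_N-\mathrm{Id})$ applied to the cubic nonlinearity of \eqref{cubic}; the coefficient mismatches $(\kappa_N-\kappa)\Pi_N u_N$ and $(\ov{a}_{2,N}-\ov{a}_2)\Pi_N(\cdots)$; the linear tail $\Pi_N^\perp\<1>$; the Wick mismatches $\<1>_N^{\diamond l}-\<1>^{\diamond l}$ ($l=1,2,3$); and the genuinely higher-order terms $\Pi_N(\mathcal{R}_{N,l}w_N^l)$, $1\le l\le 2m-1$, with $\mathcal{R}_{N,l}$ as in \eqref{RlN}.

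First I would prove a local estimate. On a subinterval $I\subset[0,T]$ with $|I|\le\tau$, the Strichartz bound \eqref{Strichartz2d} in $\mathcal{Y}^{s_1}$ (legitimate since $s_1>\tfrac{2-\alpha}{2+\theta_0}$) together with Lemma~\ref{le:wave_cubic} and its higher-degree analogues gives
\[
\|\vec d_N\|_{\mathcal{Y}^{s_1}(I)}\le C\|\vec d_N(t_I)\|_{\mathcal{H}^{s_1}}+C\|F_N^{\mathrm{err}}\|_{L^1_tH^{s_1-\alpha}(I)}+C\,\Theta_I\big(\|\vec d_N\|_{\mathcal{Y}^{s_1}(I)}+\|\vec d_N\|_{\mathcal{Y}^{s_1}(I)}^2+\|\vec d_N\|_{\mathcal{Y}^{s_1}(I)}^3\big),
\]
where $t_I$ is the left endpoint of $I$ and $\Theta_I\lesssim R^{c_1}|I|^{c_2}$ for some $c_1,c_2>0$ depending only on $m,\alpha$, by H\"older in time and the global $L^{10m}_t$ controls \eqref{globalbound}, \eqref{cubicbd}. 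Choosing $\tau\sim R^{-c_0}$ for an absolute power $c_0$ of $m$ makes $C\Theta_I<\tfrac12$, so on the bootstrap assumption $\|\vec d_N\|_{\mathcal{Y}^{s_1}(I)}\le1$ the nonlinear self-interaction is absorbed and $\|\vec d_N\|_{\mathcal{Y}^{s_1}(I)}\le 2C\|\vec d_N(t_I)\|_{\mathcal{H}^{s_1}}+2C\|F_N^{\mathrm{err}}\|_{L^1_tH^{s_1-\alpha}(I)}$.

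Second I would estimate $F_N^{\mathrm{err}}$ in $L^1_tH^{s_1-\alpha}=L^1_tH^{-3\beta-2\eps}$, which is where the smallness is harvested. The coefficient mismatches are $O(N^{-2\beta})$ by Proposition~\ref{convergeneclinear}; the operators $\Pi_N-\mathrm{Id}$ and $\Pi_N^\perp$ gain a positive power of $N$ because the cubic nonlinearity and $\<1>$ are slightly smoother than the target space; the Wick mismatches are $O(N^{-\eps/2})$ by hypothesis; and for the higher-order terms the prefactor $N^{-(2j-4)\beta}$ dominates the (at worst $L^\infty_x$) growth of $\<1>_N^{\diamond (2j-l-1)}$ permitted by the hypotheses, so $\mathcal{R}_{N,l}$ is $O(N^{-\eps})$ in a suitable negative Sobolev norm and, using $\|w_N\|_{\mathcal{Y}^{s_1}}\le\|w\|_{\mathcal{Y}^{s}}+\|\vec d_N\|_{\mathcal{Y}^{s_1}}\lesssim R$,
\[
\|\Pi_N(\mathcal{R}_{N,l}w_N^l)\|_{L^1_tH^{s_1-\alpha}(I)}\lesssim N^{-\eps}(R+1)^{l}.
\]
Fitting all of these products into $H^{-3\beta-2\eps}$ — in particular the resonant term $\<1>^{\diamond 2}d_N$ and the negative-regularity factors $\<1>_N^{\diamond l}-\<1>^{\diamond l}$ — is precisely what forces $\alpha>\tfrac89$. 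Summing over $I$, $\|F_N^{\mathrm{err}}\|_{L^1_tH^{s_1-\alpha}([0,T])}\lesssim N^{-\eps/2}R^{c_3}$.

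Finally I would iterate over the $n\sim T/\tau\sim TR^{c_0}$ subintervals. Since $\vec d_N(0)=0$, the per-interval inequality telescopes to
\[
\|\vec d_N\|_{\mathcal{Y}^{s_1}([0,T])}\lesssim n(2C)^{n}\,\|F_N^{\mathrm{err}}\|_{L^1_tH^{s_1-\alpha}([0,T])}\lesssim K_0^{TR^{c_0}}R^{c_3}\,N^{-\eps/2}
\]
for an absolute constant $K_0>1$. Because $c_0$ and $c_3$ lie far below $100m$ while $R\gg1$ and $T\ge1$, the constraint $K_0^{TR^{100m}}<N^{\eps/2}$ leaves enough room that $K_0^{TR^{c_0}}R^{c_3}\le N^{\eps/4}$, whence $\|\vec d_N\|_{\mathcal{Y}^{s_1}([0,T])}\le C_\eps N^{-\eps/4}$; in particular this is $<1$ once $N$ is large, which closes the continuity bootstrap used in the local step. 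The main obstacle is the control of the higher-order terms $\Pi_N(\mathcal{R}_{N,l}w_N^l)$ for $4\le l\le 2m-1$: these are exactly the contributions for which no deterministic local Cauchy theory exists, and their estimate rests entirely on the interplay between the frequency truncation $\Pi_N$, the weights $N^{-(2j-4)\beta}$, and the stochastic bounds $\|\<1>_N^{\diamond k}\|_{L^{10m}_tL^\infty_x}\le N^{k\beta+\eps}$, within the tight web of regularity inequalities that pins $\alpha$ to $(\tfrac89,1)$.
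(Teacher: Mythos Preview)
Your overall architecture --- Strichartz on short intervals, splitting the forcing into a part linear in $d_N$ and an error $F_N^{\mathrm{err}}$, then iterating --- matches the paper. The gap is in your treatment of the genuinely supercritical terms $\Pi_N(\mathcal{R}_{N,l}w_N^l)$ for $4\le l\le 2m-1$, which is precisely where the difficulty of the lemma lives.

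You claim $\|\Pi_N(\mathcal{R}_{N,l}w_N^l)\|_{L^1_tH^{s_1-\alpha}(I)}\lesssim N^{-\eps}(R+1)^l$ using only $\|w_N\|_{\mathcal{Y}^{s_1}}\lesssim R$. This cannot be achieved. Since $H^{s_1}\hookrightarrow L^{1/(2\beta+\eps)}$ and the target space is $H^{-3\beta-2\eps}$, controlling $w_N^l$ through Sobolev embedding alone requires $l\lesssim (1+3\beta)/(4\beta)$; for $\alpha$ near $8/9$ this allows only $l\le 3$, whereas $l$ runs up to $2m-1$ with $m$ arbitrary. The Strichartz component of $\mathcal{Y}^{s_1}$ does not help here because it is only $L_t^{2+\theta_0}$ and cannot be used on more than one factor. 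You invoke ``Bourgain--Bulut'' in your opening sentence but never actually deploy its content: the paper derives from \eqref{globalbound} and Bernstein (using that $u_N$ is frequency-truncated) the a priori bound $\|w_N\|_{L_t^{10m}L_x^\infty}\lesssim N^{\beta+2\eps}R$, and this $N$-growing $L^\infty$ control is what makes the argument go through for all $m$.

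Concretely, the paper places three factors of $w_N$ in $H^{s_1}$ via Lemma~\ref{le:wave_cubic}, one factor in the Strichartz space $L_t^{2+\theta_0}L_x^{2+4/\theta_0}$, and the remaining $l-4$ factors together with $\<1>_N^{\diamond(2j-l-1)}$ in $L_t^{10m}L_x^\infty$. The powers of $N$ then balance: the $(l-4)$ factors of $N^{\beta+2\eps}$ from $w_N$, the $N^{(2j-l-1)\beta+\eps}$ from the Wick power, and the weight $N^{-(2j-4)\beta}$ leave a net $N^{-\beta+O(m\eps)}$, which is small since $\eps\ll\beta/m$. The paper stresses that putting one $w_N$ in Strichartz is essential; using the $L^\infty$ bound on all factors would leave $N^{O(l\eps)}$ with no decay. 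This accounting also forces the bootstrap to be on $\|w_N\|_{Y^{s_1}}\le 2R^{10}$ rather than on $\|d_N\|\le 1$, because the $B_{N,l}$ estimate produces a factor $\|w_N\|_{Y^{s_1}}^4$ that must be controlled a priori; the choice $\tau_0=R^{-100m}$ absorbs the resulting $R^{O(m)}$ loss, and the bootstrap closes since the final bound $\|d_N\|\le C_\eps N^{-\eps/4}$ together with $\|w\|\le R$ returns $\|w_N\|_{Y^{s_1}}<2R$.
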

	\begin{proof}

		We write
		$$ u_N(t)=\Phi_N(t)\vec{\phi}=\<1>_N+w_N(t),\quad u(t)=\Phi(t)\vec{\phi}=\<1>+w(t).
		$$
		By \eqref{globalbound}, \eqref{cubicbd} and Bernstein, we deduce that
		\begin{align}\label{basebound}  \|w_N(t)\|_{L_t^{10m}L_x^{\infty}([0,T])}\leq CN^{\beta+2\eps}R,\quad  \|\vec{w}(t)\|_{\mathcal{Y}^s([0,T]) }\leq R.
		\end{align}
		
		\noi
		$\bullet${\bf Step 1: Recursive inequality}

		Fix $t_0\in[0,T-\tau_0]$ and $I_{t_0,\tau_0}:=[t_0,t_0+\tau_0]$, where $\tau_0$ is a small parameter to be chosen later.
			Throughout the proof, the symbol $A \lesssim B$ stands for $A\leq CB$ for some constant $C$ that is \emph{independent of} parameters $R,N,\tau_0,t_0$.

		By the Strichartz inequality \eqref{Strichartz2d}, we have
		\begin{align}\label{recurrence'} 
			\|\vec{w}_N(t)-\vec{w}(t)\|_{\mathcal{Y}^{s_1}(I_{t_0,\tau_0}) } \lesssim& \|w_N(t_0)-w(t_0)\|_{H_x^{s_1}}\\+ &\Big[A_{N}(I_{t_0,\tau_0})+\sum_{l=4}^{2m-1}B_{N,l}(I_{t_0,\tau_0})+\sum_{l=0}^3\sum_{j=3}^mC_{N,j,l}(I_{t_0,\tau_0})\Big],
		\end{align}
		where
		\begin{align*}
			&A_N(I_{t_0,\tau_0}):=|\lambda-\ov{a}_{2,N}|\|\<1>^{\diamond 3} + 3 \<1>^{\diamond 2} w + 3 \<1> w^2 + w^3  \|_{L_{t}^1H_x^{s_1-\alpha}(I_{t_0,\tau_0}) }+\|\<1>_N^{\diamond 3}-\<1>^{\diamond 3} \|_{L_{t}^1H_x^{s_1-\alpha}(I_{t_0,\tau_0}) } \\
			&\hspace{2cm}+ \|\<1>_N^{\diamond 2}\cdot w_N-\<1>^{\diamond 2}w \big\|_{L_{t}^{1}H_x^{s_1-\alpha}(I_{t_0,\tau_0}) }
			+\|\<1>_Nw_N^2-\<1>\cdot w^2 \|_{L_{t}^{1}H_x^{s_1-\alpha}(I_{t_0,\tau_0}) }\\&\hspace{2cm}+\|w_N^3-w^3\|_{L_{I_{t}}^{1}H_x^{s_1-\alpha}(I_{t_0,\tau_0}) }+\|\Pi_N^{\perp}\vec{w}\|_{L_{t}^{\infty}\mathcal{H}_x^{s_1}(I_{t_0,\tau_0}) }+\tau_R|\kappa_N-\kappa|\|\vec{w}\|_{L_t^{1}\mathcal{H}_x^{s_1}(I_{t_0,\tau_0})}\\&\hspace{2cm}+\tau_R|\kappa_N|\|\vec{w}_N-\vec{w}\|_{L_t^{\infty}\mathcal{H}_x^{s_1}(I_{t_0,\tau_0}) } ,\\
			&B_{N,l}:=\|\mathcal{R}_{l,N}\cdot w_N^l \|_{L_{t}^{1}H_x^{s_1-\alpha}(I_{t_0,\tau_0}) },\quad 4\leq l\leq 2m-1  \\
			&C_{N,j,l}:=N^{-(2j-4)\beta}\|\<1>_N^{\diamond(2j-1-l)}\cdot w_N^l \|_{L_{t}^{1}H_x^{s_1-\alpha}(I_{t_0,\tau_0}) },\quad 0\leq l\leq 3\text{ and } 3\leq j\leq m.	
		\end{align*}

		From Lemma \ref{le:wave_cubic}, we have for sufficiently small $\eps>0$ and $q>1$ large enough,
		\begin{align}\label{b1}
			\|\<1>_N^{\diamond 2}w_N-\<1>^{\diamond 2}w\|_{L_{t}^1H_x^{s_1-\alpha}(I_{t_0,\tau_0}) }\lesssim_{\eps} & \|\<1>_N^{\diamond 2}-\<1>^{\diamond 2}\|_{L_{t}^1W_x^{-2\beta-2\eps,\frac{1}{\eps}}(I_{t_0,\tau_0}) }\|w\|_{L_{t}^{\infty}H_x^{s_1}(I_{t_0,\tau_0})}\notag \\
			+&\tau_0^{\frac{1}{2}}\|\<1>_N^{\diamond 2}\|_{L_{t}^{2}W_x^{-2\beta-2\eps,\frac{1}{\eps}}(I_{t_0,\tau_0}) }\|w_N-w\|_{L_{t}^{\infty}H_x^{s_1}(I_{t_0,\tau_0}) }.
		\end{align}
		\begin{align}\label{b2} 
			\|\<1>_Nw_N^2-\<1>w^2\|_{L_{t}^{1}H_x^{s_1-\alpha}(I_{t_0,\tau_0}) }\lesssim_{\eps} &\|\<1>_N-\<1>\|_{L_{1}^1W_x^{-\beta-2\eps,\frac{1}{\eps}}(I_{t_0,\tau_0}) }\|w\|_{L_{t}^{\infty}H_x^{s_1}(I_{t_0,\tau_0}) }^2\notag \\
			+&\tau_0^{\frac{1}{2}}\|\<1>_N\|_{L_{t}^{4}W_x^{-\beta-2\eps,\frac{1}{\eps}}(I_{t_0,\tau_0}) }\|w_N+w\|_{L_{t}^{4}H_x^{s_1}(I_{t_0,\tau_0}) }
			\|w_N-w\|_{L_{t}^{\infty}H_x^{s_1}(I_{t_0,\tau_0}) }
		\end{align}
		and
		\begin{align}\label{b3} 
			\|w_N^3-w^3\|_{L_t^1H_x^{s_1-\alpha}(I_{t_0,\tau_0})}\lesssim &\tau_0^{\frac{1}{2}}
			\|w_N-w\|_{L_{t}^{\infty}H_x^{s_1}(I_{t_0,\tau_0})}\big(\|w_N\|_{L_{t}^{4}H_x^{s_1}(I_{t_0,\tau_0})}^2+
			\|w\|_{L^{4}_{t}H_x^{s_1}(I_{t_0,\tau_0}) }^2 \big),
		\end{align}
		where all the implicit constants are independent of $N, R,\tau_0,t_0$, but can depend on $m$ and $\eps$. Therefore,
		\begin{align}\label{ANbound} A_{N}(I_{t_0,\tau_0})\lesssim D_N(I_{t_0,\tau_0})+\tau_0^{\frac{1}{2}}F_N(I_{t_0,\tau_0})\|w_N-w\|_{L_t^{\infty}H_x^{s_1}(I_{t_0,\tau_0}) }+\tau_0|\kappa_N-\kappa|\|\vec{w}\|_{L_t^{\infty}\mathcal{H}_x^{s_1}(I_{t_0,\tau_0})},
		\end{align}
		where
		\begin{align*}
			D_N(I_{t_0,\tau_0}):=&|\lambda-\ov{a}_{2,N}|\|\<1>^{\diamond 3} + 3 \<1>^{\diamond 2} w + 3 \<1> w^2 + w^3  \|_{L_{t}^1H_x^{s_1-\alpha}(I_{t_0,\tau_0}) }+\|\<1>_N^{\diamond 3}-\<1>^{\diamond 3} \|_{L_{t}^1H_x^{s_1-\alpha} (I_{t_0,\tau_0})}\\
			+&\|\<1>_N^{\diamond 2}-\<1>^{\diamond 2}\|_{L_{t}^1W_x^{-2\beta-\eps,\frac{1}{\eps}}(I_{t_0,\tau_0}) }\|w\|_{L_{t}^{\infty}H_x^{s_1}(I_{t_0,\tau_0})
			}\notag 
			+\|\<1>_N-\<1>\|_{L_{t}^1W_x^{-\beta-\eps,\frac{1}{\eps}}(I_{t_0,\tau_0}) }\|w\|_{L_{t}^{\infty}H_x^{s_1}(I_{t_0,\tau_0})
			}^2\notag\\
			+&\|\Pi_N^{\perp}w\|_{L_{t}^{\infty}H_x^{s_1}(I_{t_0,\tau_0})
			},\\
			F_N(I_{t_0,\tau_0})=:&\|\<1>_N^{\diamond 2}\|_{L_{t}^1W_x^{-2\beta-\eps,\frac{1}{\eps}}(I_{t_0,\tau_0}) }+
			\|\<1>_N\|_{L_{t}^1W_x^{-\beta-\eps,\frac{1}{\eps}}(I_{t_0,\tau_0})
			}
			\big(\|w_N\|_{L_{t}^{\infty}H_x^{s_1}(I_{t_0,\tau_0}) }+
			\|w\|_{L_{t}^{\infty}H_x^{s_1}(I_{t_0,\tau_0}) }
			\big)\\
			+&\|w_N\|_{L_{t}^{\infty}H_x^{s_1}(I_{t_0,\tau_0}) }^2+\|w\|_{L_{t}^{\infty}H_x^{s_1}(I_{t_0,\tau_0}) }^2+1.
		\end{align*}
		Applying Lemma \ref{le:wave_cubic}, Cauchy-Schwarz and the fact that
		$ \frac{1}{\eps}\leq 2+\frac{4}{\theta_0},
		$		 we have (here it is important that $l\geq 4$)
		\begin{align}\label{BNbound}
			&B_{N,l}(I_{t_0,\tau_0})\\ \lesssim_{\eps} & \sum_{\frac{l+1}{2}\leq j\leq m}N^{-(2j-4)\beta}\|\<1>_N^{\diamond (2j-1-l)}w_N^{l-3} \|_{L_t^1L_x^{2+\frac{4}{\theta_0}}(I_{t_0,\tau_0})}\|w_N\|_{L_t^{\infty}H_x^{s_1}(I_{t_0,\tau_0}) }^3\notag \\
			\lesssim_{\eps,m} &\tau_0^{\frac{1}{4}}N^{-\beta}\|w_N\|_{L_t^{\infty}H_x^{s_1}(I_{t_0,\tau_0}) }^3\|w_N\|_{L_t^2L_x^{2+\frac{4}{\theta_0}}(I_{t_0,\tau_0}) }\sup_{\frac{l+1}{2}\leq j\leq m}N^{-(2j-5)\beta}\|\<1>_N^{\diamond (2j-l-1)}w_N^{l-4}\|_{L_t^{4}L_x^{\infty}(I_{t_0,\tau_0})}\notag \\
			\lesssim &\tau_0^{\frac{1}{4}}N^{-\beta}
			\|w_N\|_{Y^{s_1}(I_{t_0,\tau_0}) }^4
			\sup_{2\leq j\leq m}N^{-(2j-5)\beta}\|\<1>_N^{\diamond (2j-l-1)}\|_{L_t^{10m}L_x^{\infty}(I_{t_0,\tau_0})}\|w_N\|_{L_t^{10m}L_x^{\infty}}^{l-4}\notag \\
			\lesssim &
			 \tau_0^{\frac{1}{4}}N^{-\beta+2m\eps}
			 \|w_N\|_{Y^{s_1}(I_{t_0,\tau_0}) }^4,
			 \notag 
		\end{align}
	where to the last step, we have used \eqref{basebound} and the $L_t^{10m}L_x^{\infty}$ bound for $\<1>_N^{\diamond (k)}$. 
		Note that here it is crucial to put one $w_N$ in the space $L_t^{2+\theta_0}L_x^{2+\frac{4}{\theta_0}}$ in order to gain some negative power of $N$, as putting \eqref{basebound} on all $w_N$ will lead to a bound $N^{l\eps}$ that does not converge to zero as $N\rightarrow\infty$. 
		
		Similarly,
		\begin{align}\label{CNbound}
			C_{N,j,l}(I_{t_0,\tau_0})\lesssim_{\eps,m}N^{-(2j-4)\beta}\tau_0^{\frac{1}{2}}\|\<1>_N^{\diamond (2j-l-1)}\|_{L_t^{2}W_x^{-(3-l)\beta-2\eps,\frac{1}{\eps}}(I_{t_0,\tau_0}) } \|w_N\|_{L_{t}^{\infty}H_x^{s_1}(I_{t_0,\tau_0}) }^l.
		\end{align}
		
		\noi
		$\bullet${\bf Step 2: Bootstrap argument}
		
		We first claim that if for some $T_1\in(0,T)$,
		\begin{align}\label{bootstrapbound1}  \|w_N\|_{Y^{s_1}([0,T_1]) }\leq 2R^{10},
		\end{align}
		then for $R,N$ large enough, there exist $C_{\eps}>0$ and absolute constant $K_0>0$, such that 
		\begin{align}\label{claim}  \|\vec{w}_N-\vec{w}\|_{\mathcal{Y}^{s_1}([0,T_1]) }\leq C_{\eps}K_0^{TR^{50m}}N^{-\frac{\eps}{2}}.
		\end{align}		
		Indeed, we decompose $[0,T_1]$ into $k_0$ intervals of size $\tau_0=\tau_0(R)=R^{-100m}$, and denote by 
		$$\mathbf{x}_k:=\|\vec{w}_N-\vec{w}\|_{\mathcal{Y}^{s_1}(J_k) },
		$$
		where $J_k=(k\tau_0,(k+1)\tau_0]$.
		By \eqref{recurrence'}, \eqref{ANbound},\eqref{BNbound},\eqref{CNbound}, \eqref{basebound}, we deduce that
		\begin{align*}
			\mathbf{x}_k\leq C_{\eps}\tau_0^{\frac{1}{4}}R^{20m}\mathbf{x}_k+C_0\mathbf{x}_{k-1}+C_{\eps}R^{3}\tau_0N^{-\frac{\eps}{2}}+C_{\eps}\tau_0^{\frac{1}{2}}R^{50m}N^{-\beta+2m\eps}.
		\end{align*}
		For $R$ large enough, $\tau_0$ small enough such that $$C_{\eps}\tau_0^{\frac{1}{4}}R^{20m}<\frac{1}{2},$$ we deduce that (provided that $\eps<\beta/2m$)
		$$ \mathbf{x}_k\leq 2C_0\mathbf{x}_{k-1}+C_{\eps}\tau_0^{\frac{1}{2}}R^{50m}N^{-\frac{\eps}{2}}. 
		$$
		This yields
		$$ \mathbf{x}_k\leq (2C_0)^{\frac{T_1}{\tau_0}}\mathbf{x}_0+(2C_0)^{\frac{T_1}{\tau_0}}C_{\eps}T_1\tau_0^{\frac{1}{2}}R^{50m}N^{-\frac{\eps}{2}}\leq C_{\eps}(2C_0)^{\frac{T}{\tau_0}}T\tau_0^{\frac{1}{2}}R^{50m}N^{-\frac{\eps}{2}}.
		$$
		Hence \eqref{claim} follows.
		
		To finish the proof, it suffices to prove the bootstrap assumption \eqref{bootstrapbound1} up to time $T_1=T$, with slightly smaller upper bound $R^{10}$ instead of $2R^{10}$. More precisely, Let $T_*\leq T_1$ be the largest number such that 
		$$\|w_N\|_{Y^{s_1}([0,T_*]) }\leq R^{10}.
		$$
		Since for fixed $N$, $w_N$ solves an ODE in the finite-dimensional space, we deduce that the function
		$$ t\mapsto \|w_N\|_{Y^{s_1}([0,t]) }
		$$ 
		is continuous, thus $T_*>0$. On the other hand, if $T_*<T_1$, again by continuity, there exists $\delta_*\in(0,T_1-T_*)$, such that
		$$  \|w_N\|_{Y^{s_1}([0,T_*+\delta_*]) }<R^{10}+1<2R^{10}.
		$$
		Therefore, we deduce that \eqref{claim} holds with $T_1=[0,T_*+\delta_*]$. In particular,
		\begin{align*}  \|w_N\|_{Y^{s_1}([0,T_*+\delta_*]) }\leq R+C_{\eps}(K_0)^{T_0R^{100m}}N^{-\frac{\eps}{2}}<R(1+C_{\eps}N^{-\frac{\eps}{4}})<2R,
		\end{align*}
		provided that $N$ is large enough such that $C_{\eps}N^{-\frac{\eps}{4}}<1$. This contradicts to the definition of $T_*$.  So we must have $T_*=T$. The proof of Lemma \ref{Firstcomparison} is complete.		
	\end{proof}
\vspace{0.3cm}

\noi
$\bullet$
{\bf Proof of Proposition \ref{convergenceN}}:
	
First we note that by choosing $R_N=(\log N)^{\theta}$ for $\theta\ll 1$, Lemma \ref{Firstcomparison} allows to prove the almost sure convergence of the dyadic sequence. To prove the convergence of the full sequence, we first define properly the good data set. For each dyadic number $N$, let $R_N=(\log N)^{\theta}$, $M_N=(\log N)^{A_0}$ for $A_0\gg 1,0<\theta\ll 1$. Define
	\begin{align*} 
	 \Sigma_{1,N}:=&\bigcap_{l=0}^3\bigcap_{k=4}^{2m-1}\bigcap_{N_1=N}^{2N_1}\big\{
	 \|\<1>_N^{\diamond l}-\<1>_{N_1}^{\diamond l}\|_{L_t^{10m}W_x^{-l\beta-\eps,\frac{1}{\eps}}}\leq N^{-\frac{\eps}{2}}
	 \big\}\\
	 &\hspace{1.5cm}\cap\big\{
	\|\<1>_{N_1}^{\diamond k}\|_{L_t^{10m}L_x^{\infty}}\leq N^{k\beta+\eps},\; \|\<1>_N^{\diamond (k-l)}\|_{L_t^{10m}W_x^{-(3-l)\beta-2\eps,\frac{1}{\eps}} }\leq N^{(k-3)\beta-\eps}
	 \big\},\\
	 \Sigma_{2,N}:=&\big\{ 	\|\vec{\Phi}_N(t)\vec{\phi}\|_{L_t^{10m}\mathcal{W}_x^{-\beta-\eps,\infty}}\leq R_N,\; \|\vec{\<1>}_N\|_{L_t^{10m}\mathcal{W}_x^{-\beta-\eps,\infty} }\leq R_N \big\},\\
	 \Sigma_{3,N}:=&\bigcap_{l=0}^3\big\{
	 \|\<1>^{\diamond l}\|_{L_t^{10m}W_x^{-l\beta-\eps,\frac{1}{\eps}}}+ \|\vec{\Phi}(t)\vec{\phi}-\vec{\<1>}\|_{\mathcal{Y}^s([0,T])} \leq R_N
	  \big\},\\
	 \Sigma_{4,N}:=&\bigcap_{l=0}^3\bigcap_{k=4}^{2m-1}\big\{
	 \|\<1>_N^{\diamond l}\|_{L_{t}^{10m}W_x^{-l\beta-\eps,\frac{1}{\eps} } }\leq R_N,\;
	 \|\<1>_N^{\diamond (k-l)}\|_{L_t^{10m}W_x^{-(3-l)\beta-2\eps,\frac{1}{\eps}}}\leq N^{(k-3)\beta-\eps}\big\}\\ 
&\hspace{1.5cm}\cap\big\{	 \|\<1>_{N}^{\diamond l}-\<1>^{\diamond l} \|_{L_{t}^{10m}W_x^{-l\beta-\eps,\frac{1}{\eps} }}\leq N^{-\frac{\eps}{2}}
	 \big\},\\
	 \Sigma_{5,N}:=&\{\sup_{N\leq N_1\leq 2N_1}\|\pi_{M_N}^{\perp}\vec{\Phi}_N(t)\vec{\phi}\|_{L_t^{10m}\mathcal{W}_x^{-\beta-\eps,\infty} }\leq 1 \}.
	\end{align*}
 
\begin{lem}\label{largedeviation} 
There exist $C>0$ and $\delta(\eps)>0$ such that for $i\in\{1,2,3,4,5\}$, there holds
\begin{align*}
\vec{\mu}(\Sigma_{i,N}^c)\leq CN^{-\delta(\eps)}.
\end{align*}
\end{lem}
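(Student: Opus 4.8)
The plan is to handle the five sets by a union bound over their defining events---there are $O(mN)$ of them---bounding the probability of each individual event either by $C_pN^{-p}$ for every $p$ (when its threshold is a power of $N$) or by a stretched exponential in $\log N$ (when its threshold is $R_N=(\log N)^{\theta}$); in both cases the extra factor $N$ from the union bound is absorbed by taking the moment exponent large. The events split into two groups. The first, consisting of $\Sigma_{1,N}$, $\Sigma_{4,N}$ and the $\<1>$-terms of $\Sigma_{3,N}$, involves only the free field $\<1>_N$ and its Wick powers, and will be treated by hypercontractivity. The second, consisting of $\Sigma_{2,N}$, $\Sigma_{5,N}$ and the remainder $\vec{\Phi}(t)\vec{\phi}-\vec{\<1>}$ in $\Sigma_{3,N}$, involves the nonlinear flows and will be treated using the invariance of $\vec{\nu}_N$ (as in Lemma~\ref{newlargedeviation}) and the construction in Proposition~\ref{th:wave_cubicdetailed}.

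\emph{Gaussian terms.} For fixed $t$ each $\<1>_N^{\diamond l}(t)$ lies in the homogeneous chaos of degree $l\le 2m$, so Proposition~\ref{Wienerchaos} reduces every $L^p_\omega$ bound on a space-time norm of $\<1>_N^{\diamond l}$ (or of a difference $\<1>_N^{\diamond l}-\<1>_{N_1}^{\diamond l}$) to the corresponding $L^2_\omega$ bound, which is computed from the covariance identity \eqref{esperance-scalaire} together with the convolution estimate (Lemma~\ref{lem:convolutionBasic}) and $\alpha>\tfrac34$, exactly as in Lemma~\ref{large-deviationVN} and Lemma~\ref{le:Wick_bd}. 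For a threshold of the form $N^{c}$ --- e.g.\ $\|\<1>_{N_1}^{\diamond k}\|_{L_t^{10m}L_x^\infty}\le N^{k\beta+\eps}$, where one first passes through $W^{\eps/2,p}\hookrightarrow L^\infty$ with $p\gg1$ --- the $L^2_\omega$ norm is $O(N^{k\beta})$ up to a polylogarithmic factor, so the surplus $N^{\eps}$ gives a genuine $N^{-p}$ tail after Chebyshev. For a difference threshold $\|\<1>_N^{\diamond l}-\<1>_{N_1}^{\diamond l}\|\le N^{-\eps/2}$ with $N\le N_1\le 2N$, the two truncations differ only in the shell $|k|\sim N$ and one has $\eps$ of spare regularity in $W^{-l\beta-\eps,1/\eps}$, so the $L^2_\omega$ norm of the difference is $O(N^{-\eps})$ and Chebyshev at level $N^{-\eps/2}$ yields $N^{-p\eps/2}$. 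For the $R_N$-thresholds on $\|\<1>_N^{\diamond l}\|$ in $\Sigma_{3,N},\Sigma_{4,N}$ one optimises the moment exponent to get a tail $\exp(-c(\log N)^{c'})$.

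\emph{Nonlinear-flow terms.} For $\Sigma_{5,N}$, after the embedding $\mathcal{W}^{-\beta-\eps/2,r}\hookrightarrow\mathcal{W}^{-\beta-\eps,\infty}$ for $r\gg1$, the bound is a direct application of Lemma~\ref{newlargedeviation} with $\gamma=\beta+\eps/2$, $M=M_N=(\log N)^{A_0}$ and $\lambda=1$, giving $\vec{\mu}(\Sigma_{5,N}^c)\lesssim\exp(-c(\log N)^{A_0c''})$, which is $\le CN^{-\delta(\eps)}$ once $A_0$ is chosen large. For $\Sigma_{2,N}$ I would run the scheme of the proof of Lemma~\ref{newlargedeviation} directly on the full object $\vec{\Phi}_N(t)\vec{\phi}$ (no frequency splitting, hence no $M$-gain): split off $\{\int_{\T^2}V_N(\Pi_N\phi)>\lambda_1\}$, of probability $\le e^{-c\lambda_1^{1/2m}}$ by Lemma~\ref{large-deviationVN}, and on its complement use $\tfrac{{\rm d}\vec{\mu}}{{\rm d}\vec{\nu}_N}=\zZ_N e^{\int V_N}\le\zZ_N e^{\lambda_1}$ together with the invariance $\mathrm{Law}_{\vec{\nu}_N}(\vec{\Phi}_N(t)\vec{\phi})=\vec{\nu}_N$, Minkowski in $(t,\omega)$, the moment bound on $\E^{\vec{\mu}}\|\vec{\phi}\|^{p}_{\mathcal{W}^{-\beta-\eps,\infty}}$ (a degree-one chaos, so $\lesssim(Cp)^{p/2}$), Cauchy--Schwarz and Proposition~\ref{pr:uniform_bd_density}; optimising $\lambda_1$ and the moment exponent gives again a stretched-exponential-in-$\log N$ bound. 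The remainder part of $\Sigma_{3,N}$ follows from the globalisation in Proposition~\ref{th:wave_cubicdetailed}: the $R$-certain local estimates on intervals of length $\sim R^{-4/3}$ combined with Bourgain's invariant-measure iteration yield $\vec{\nu}\big(\|\vec{\Phi}(t)\vec{\phi}-\vec{\<1>}\|_{\mathcal{Y}^s([0,T])}>\lambda\big)\le C_Te^{-c\lambda^{c'}}$, which transfers to $\vec{\mu}$ by the same density-truncation argument.

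The main obstacle is the control of $L^\infty_x$-based (Besov $\mathcal{C}^\gamma$) norms of the nonlinear flow $\vec{\Phi}_N(t)\vec{\phi}$, for which there is no deterministic Cauchy theory to lean on: the sole tool is invariance of $\vec{\nu}_N$, which is why Lemma~\ref{newlargedeviation} is stated with the smooth cutoff $\pi_M$ (recall $\Pi_M$ is unbounded on $L^p(\T^2)$) and why the passage from $\vec{\nu}_N$ to $\vec{\mu}$ must be made by truncating where the density $\zZ_N e^{\int V_N}$ is large, not by a naive H\"older split (the latter fails since $e^{+c\int V_N}$ is not $\vec{\mu}$-integrable). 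One point worth flagging: since the constraint in Lemma~\ref{Firstcomparison} forces $R_N$ to be only a power of $\log N$, the events in $\Sigma_{2,N},\Sigma_{3,N}$ and the $R_N$-part of $\Sigma_{4,N}$ are controlled only by $\exp(-c(\log N)^{c'})$ rather than a literal power of $N$; this decay is still summable along the dyadic sequence of $N$'s, which is all that is used in the sequel, so the statement may be read with this (sufficient) bound, the genuine polynomial rate $CN^{-\delta(\eps)}$ holding for $i\in\{1,5\}$.
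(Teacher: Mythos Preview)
Your approach is essentially the same as the paper's: the Gaussian events ($\Sigma_{1,N}$, $\Sigma_{4,N}$, and the $\<1>$-parts of $\Sigma_{3,N}$) are reduced via hypercontractivity (the paper packages this as Lemma~\ref{continuity}) to $L^2_\omega$ bounds on Wick powers, computed from \eqref{esperance-scalaire} and the convolution lemma (Corollary~\ref{cor:convolution2} for the differences, Lemma~\ref{le:Wick_bd} for the rest); the nonlinear-flow events $\Sigma_{2,N},\Sigma_{5,N}$ go through the Sobolev embedding $W^{-\beta-\eps/2,r}\hookrightarrow W^{-\beta-\eps,\infty}$ for large $r$ and Lemma~\ref{newlargedeviation}.

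Two points where you are actually more careful than the paper's written proof. First, you explicitly treat the remainder $\|\vec{\Phi}(t)\vec{\phi}-\vec{\<1>}\|_{\mathcal{Y}^s}\le R_N$ appearing in $\Sigma_{3,N}$ via the globalisation/invariance argument underlying Proposition~\ref{th:wave_cubicdetailed}; the paper's proof of the lemma only lists the Gaussian estimates \eqref{B2}--\eqref{B5} for $i=3$ and does not mention this nonlinear term. Second, your flag at the end is correct: for $\Sigma_{2,N}$ (and the $R_N$-parts of $\Sigma_{3,N},\Sigma_{4,N}$) the threshold $R_N=(\log N)^{\theta}$ with $\theta\ll 1$ produces, through Lemma~\ref{newlargedeviation} or Lemma~\ref{continuity}, only a bound of the form $\exp(-c(\log N)^{c'})$ rather than a genuine $N^{-\delta}$. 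Since the only use of the lemma is the Borel--Cantelli step $\sum_{N\in 2^{\N}}\vec{\mu}(\Sigma_N^c)<\infty$, this weaker decay is sufficient and the argument is unaffected.
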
 
\begin{proof}
In order not to perturb the main line of argument, we set aside the proof of this lemma in Appendix \ref{appendix:largedeviation}
\end{proof}
Next, we define
	$$ \Sigma_N=\bigcap_{j=1}^5\Sigma_{j,N}.
	$$
 Then by Lemma \ref{largedeviation}, we have
	$$ \sum_{N\in 2^{\N}}\vec{\mu}(\Sigma_N^c)<\infty.
	$$
	Therefore, by Borel-Cantelli, the set
	$$ \Sigma:=\limsup_{j\rightarrow\infty}\Sigma_{2^j} 
	$$
	has full $\vec{\mu}$ measure, i.e. $\mu(\Sigma)=1$.
	To finish the proof, we need to show that for any $\vec{\phi}\in\Sigma$, $\vec{\Phi}_N(t)\vec{\phi}$ converges to $\vec{\Phi}(t)\vec{\phi}$ in $C([0,T];\mathcal{H}_x^{-\beta-\eps})$.
	
	By definition, there exists $N_0$ such that $\vec{\phi}\in\Sigma_N$ for all dyadic number $N\geq N_0$. Pick $N_1\in[N,2N]$, not necessarily a dyadic number, our goal is to compare $\vec{w}_{N_1}$ and $\vec{w}$ in $L_t^{\infty}\mathcal{H}_x^{s_1}$. 
	We will essentially follow the argument of the proof of Lemma \ref{Firstcomparison}, with an additional care that we do not have the bound $$
	\|w_{N_1}\|_{L_t^{10m}L_x^{\infty}([0,T]) }\leq N^{\beta+2\eps}R_N$$
	in a priori.
	 Nevertheless, the choice of $\Sigma_N$ provides a control
	$$ \|\pi_{M_N}^{\perp}w_{N_1}\|_{L_t^{\infty}W_x^{-\beta-\eps,\infty}([0,T])}\leq 1.
	$$
	Thus by the Sobolev embedding and Bernstein's inequality,
	\begin{align*}
		\|w_{N_1}\|_{L_t^{\infty}W_x^{-\beta-\eps,\infty}}\leq &\|\pi_{M_N}w_{N_1}\|_{L_t^{\infty}W_x^{-\beta-\eps,\infty}}+1\\
		\leq &\|\pi_{M_N}w_{N}\|_{L_t^{\infty}W_x^{-\beta-\eps,\infty}}+M_N\|w_{N_1}-w_N\|_{L_t^{\infty}H_x^{s_1}}+1.
	\end{align*}			
	By Bernstein again,
	\begin{align}
		\|w_{N_1}\|_{L_t^{\infty}L_x^{\infty}} \lesssim_{\eps} \|\pi_{2N_1}w_{N_1}\|_{L_t^{\infty}W_x^{2\eps,\frac{1}{\eps}}}\lesssim &N_1^{\beta+3\eps}\|w_{N_1}\|_{L_t^{\infty}W_x^{-\beta-\eps,\infty}}\notag \\ \lesssim &N^{\beta+4\eps}R_N+(\log N)^{A_0}N^{\beta+\eps}\|w_{N_1}-w\|_{L_t^{\infty}H_x^{s_1}}, \label{wN1Linfty}
	\end{align}
	for $N$ large enough.
	
	Now we argue as in the Step 2 in the proof of Lemma \ref{Firstcomparison}. Assuming first that for some $T_1\in(0,T)$, \begin{align}\label{bootstrapbound2}
		\|w_{N_1}\|_{Y^{s_1}([0,T_1]) }\leq 2R_N^{10}
	\end{align}
	holds. Consequently, we have very roughly estimate
	$$ \|\vec{w}_{N_1}-\vec{w}\|_{\mathcal{Y}^{s_1}([0,T_1])}\leq 3R_N^{10}.
	$$
	Thanks to \eqref{wN1Linfty} and the choice $R_N=(\log N)^{\theta}$, we deduce that 
	$$ \|w_{N_1}\|_{L_t^{\infty}L_x^{\infty}([0,T_1])}\leq N^{\beta+10\eps}.
	$$			
	The same iterative argument yields(by choosing $\tau_0=R_N^{-50m}$) 
	\begin{align}\label{claim2}  \|\vec{w}_{N_1}-\vec{w}\|_{\mathcal{Y}^{s_1}([0,T_1]) }\leq C_{\eps}(K_0)^{TR_N^{100m}}N^{-\frac{\eps}{2}}\leq C_{\eps}N^{-\frac{\eps}{4}},
	\end{align}	
	provided that $\theta\ll 1$ such that $R(N)^{100m}=(\log N)^{100m\theta}\ll \eps(\log N)$.
	
	Following the same bootstrap argument as in Step 2, we deduce that \eqref{claim2} is indeed true up to time $T$. This completes the proof of Proposition \ref{convergenceN}.		

	\appendix



	
	\section{Nonlinear estimates and convolution inequalities} \label{appendix:bounds}

	\begin{lem} \label{le:wave_cubic}
		Let $\alpha \in (\frac{8}{9}, 1)$ and $s \in (1-\frac{\alpha}{2}, 4\alpha-3)$. Let $\eps$ be sufficiently small such that
		\begin{equation}\label{constraint} 
			1 - \frac{\alpha}{2} + 2\eps < s < 4 \alpha - 3 - 2\eps,\; 3\alpha-2>4\eps.
		\end{equation}
		Then we have the following bounds:
		\begin{equation*}
			\begin{split}
				\|F_0\|_{H^{-(\alpha-s)}} \lesssim \|F_0\|_{H^{-3(1-\alpha)-2\eps}},\quad \|F_l u^l\|_{H^{-(\alpha-s)}} \lesssim \|F_1\|_{W^{-(3-l)(1-\alpha)-2\eps, \frac{1}{\eps}}} \|u\|_{H^s}^l
			\end{split}
		\end{equation*}
		for $l=1,2,3$.
	\end{lem}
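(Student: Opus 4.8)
The first bound is a triviality I would dispatch immediately: on $\T^2$ one has the embedding $H^{b}\hookrightarrow H^{a}$ whenever $a\le b$, and the hypothesis $s<4\alpha-3-2\eps$ from \eqref{constraint} is precisely equivalent to $-(\alpha-s)\le-3(1-\alpha)-2\eps$. Hence $\|F_0\|_{H^{-(\alpha-s)}}\le\|F_0\|_{H^{-3(1-\alpha)-2\eps}}$ with no further work.

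For the family of product bounds (where $F_1$ in the statement should read $F_l$), the plan is to dualise. Since $s<4\alpha-3<\alpha$ we have $\alpha-s>0$, so
$$\|F_l u^l\|_{H^{-(\alpha-s)}}=\sup_{\|g\|_{H^{\alpha-s}}\le1}\big|\langle F_l u^l,g\rangle\big|=\sup_{\|g\|_{H^{\alpha-s}}\le1}\big|\langle F_l,\,u^l g\rangle\big|.$$
Writing $\sigma_l:=(3-l)(1-\alpha)+2\eps\ge0$ for $l\le3$ and using the duality of $W^{-\sigma_l,\,1/\eps}$ with $W^{\sigma_l,\,1/(1-\eps)}$, it then suffices to prove
$$\big\|u^l g\big\|_{W^{\sigma_l,\,1/(1-\eps)}}\lesssim\|u\|_{H^s}^{l}\,\|g\|_{H^{\alpha-s}}.$$
Because $\sigma_l\ge0$, I would expand the left-hand side by applying the fractional Leibniz rule (Lemma~\ref{le:fractional_Leibniz}) repeatedly to the $l+1$ factors $u,\dots,u,g$, reducing to a finite sum of terms in which $\langle\nabla\rangle^{\sigma_l}$ falls on exactly one factor while the remaining $l$ factors are kept in pure Lebesgue spaces.

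Each such term is then estimated by Hölder's inequality in $x$ together with the Sobolev embeddings $H^{s}\hookrightarrow W^{\gamma,q}$ and $H^{\alpha-s}\hookrightarrow W^{\gamma,q}$ on $\T^2$. The only point to verify is that the Lebesgue exponents forced by these embeddings can be chosen in $(1,\infty)$ with reciprocals summing to $1-\eps$; a short computation shows this is possible exactly when $(1-l)s\le(4-l)\alpha-2-4\eps$, which for $l=1$ reads $3\alpha-2\ge4\eps$, for $l=3$ reads $s\ge1-\tfrac{\alpha}{2}+2\eps$, and for $l=2$ follows by combining the two. In other words, the constraints \eqref{constraint} are tailored precisely so that the exponent count closes, with a little room because all the inequalities there are strict.

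The main (and essentially the only) obstacle in turning this into a full proof is the bookkeeping: for each of the $O(l)$ terms produced by the iterated Leibniz rule one must check that the leftover smoothness $s$ (respectively $\alpha-s$) after subtracting $\sigma_l$ derivatives still suffices to reach the required Lebesgue index, and that every intermediate Lebesgue exponent appearing along the iteration stays strictly between $1$ and $\infty$. None of this is deep, but it is the part that has to be carried out carefully, and it is where the precise numerology of \eqref{constraint} and the restriction $\alpha\in(\tfrac{8}{9},1)$ get used.
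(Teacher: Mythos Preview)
Your proposal is correct and follows essentially the same route as the paper: dualise, apply the fractional Leibniz rule, and close with Sobolev embeddings, with the constraints \eqref{constraint} providing exactly the numerical room needed. The only organisational difference is that the paper first splits $u^l\cdot G$ into two Leibniz pieces and then handles $\langle\nabla\rangle^{\gamma}(u^l)$ case by case in $l$, whereas you iterate Leibniz over all $l+1$ factors at once; the resulting exponent conditions coincide, and your observation that the $l=2$ inequality follows from the $l=1$ and $l=3$ ones (via $3\alpha-2>4\eps$) is accurate.
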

	\begin{proof}
		
		The first inequality is trivial. To prove the second, by duality, it suffices to show that, for any $G\in H^{\alpha-s}$ such that $\|G\|_{H^{\alpha-s}}\leq 1$ and $H\in L^{\frac{1}{\eps}}$, we have
		\begin{align}\label{duality} 
			\Big|\int_{\T^2}H\cdot \langle\nabla\rangle^{(3-l)(1-\alpha)+2\eps}(u^lG)dx\Big|\lesssim \|H\|_{L^{\frac{1}{\eps}}}\|u\|_{H^s}^l.
		\end{align}
		By H\"older and Lemma \ref{le:fractional_Leibniz}, the left hand side of \eqref{duality} is bounded by
		$$ \|H\|_{L^{\frac{1}{\eps}}}\big(\|\langle\nabla\rangle^{\gamma} (u^l)\|_{L^{\frac{2}{1+\alpha-s-2\eps}}}\|G\|_{L^{\frac{2}{1-\alpha+s}}}
		+\|u^l\|_{L^{\frac{2}{1+\alpha-s-2\eps-\gamma } }} \|\nabla^{\gamma}G\|_{L^{\frac{2}{1-\alpha+s+\gamma}}}
		\big),
		$$
		where $\gamma=(3-l)(1-\alpha)+2\eps$. Using the Sobolev embedding $$H^{\alpha-s}(\T^2)\hookrightarrow L^{\frac{2}{1-\alpha+s}}(\T^2),\quad H^{\alpha-s}(\T^2)\hookrightarrow W^{\gamma,\frac{2}{1-\alpha+s+\gamma}}(\T^2),$$ the two norms of $G$ are controlled by $\|G\|_{H^{\alpha-s}}\leq 1$. Thanks to the conditions $s>1-\frac{\alpha}{2}+2\eps$ and $3\alpha-2>4\eps$, we have
		$ \frac{2l}{1+\alpha-s-\gamma-2\eps}\leq \frac{2}{1-s},
		$ for $l=1,2,3$, thus by H\"older,
		$$ \|u^l\|_{L^{\frac{2}{1+\alpha-s-2\eps-\gamma}}}\lesssim \|u\|_{H^s}^l. 
		$$
		When $l=1$, $\gamma=2(1-\alpha)+2\eps$, since $3\alpha-2\geq 4\eps$, by H\"older and Sobolev's embedding we have
		$$\|\langle \nabla\rangle^{2(1-\alpha)+2\eps}u\|_{L^{\frac{2}{1+\alpha-s-2\eps}}}\lesssim 
		\|\langle \nabla\rangle^{2(1-\alpha)+2\eps}u\|_{L^{\frac{2}{1+2(1-\alpha)+2\eps-s}}}\lesssim \|u\|_{H^s}.
		$$
		For $l=2,3$, using Lemma \ref{le:fractional_Leibniz}, H\"older's inequality and the constraint \eqref{constraint}, we get
		$$
		\|\langle\nabla\rangle^{\gamma} (u^l)\|_{L^{\frac{2}{1+\alpha-s-2\eps}}}\lesssim
		\|u\|_{W^{\gamma,\frac{2}{\alpha-2\eps-(l-2)(1-s) }}}\|u^{l-1}\|_{L^{\frac{2}{(l-1)(1-s)}}}\lesssim \|u\|_{H^s}^{l}.
		$$
		This completes the proof of Lemma \ref{le:wave_cubic}.
	\end{proof}

	\begin{lem}\label{lem:convolutionBasic}
		Let $0\leq \eta_1\leq \eta_2$ and $\eta_1+\eta_2>d$. Then there exists $C_0>0$, such that for every $k_0\in\Z^d$:
		\begin{itemize}
			\item[(i)] If $\eta_2<d$, we have
			$$  \sum_{k\in\Z^d}\frac{1}{\langle k\rangle^{\eta_1}\langle k-k_0\rangle^{\eta_2}}\leq \frac{C_0}{\langle k_0\rangle^{\eta_1+\eta_2-d}}.
			$$
			\item[(ii)] If $\eta_2=d$, then 
			$$ \sum_{k\in\Z^d}\frac{1}{\langle k\rangle^{\eta_1}\langle k-k_0\rangle^{\eta_2}}\leq \frac{C_0\log(k_0)}{\langle k_0\rangle^{\eta_1}}.
			$$
			\item[(iii)] If $\eta_2>d$, then
			$$ \sum_{k\in\Z^d}\frac{1}{\langle k\rangle^{\eta_1}\langle k-k_0\rangle^{\eta_2}}\leq \frac{C_0}{\langle k_0\rangle^{\eta_1}}.
			$$
			\item[(iv)]	If $\frac{(n-1)d}{n} < \eta < d$, then we have the bound
			\begin{equation*}
				\sum_{\substack{(k_1,\cdots,k_n)\in(\Z^d)^n\\k_1 + \cdots + k_n = k }} \frac{1}{\langle k_1\rangle^{\eta}} \cdots \frac{1}{\langle k_n\rangle^{\eta}} \lesssim \frac{1}{\langle k\rangle^{n \eta - (n-1)d}}\;, 
			\end{equation*}
		\end{itemize}
	\end{lem}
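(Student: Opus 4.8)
The plan is to prove parts (i)--(iii) by comparing the lattice sum with an integral over $\R^d$ and estimating that integral through a region decomposition, and then to deduce part (iv) from part (i) by induction on $n$.

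For (i)--(iii), since each weight $\langle\cdot\rangle^{-\eta}$ is comparable on any unit cube to its value at the cube's centre, a routine comparison of the sum with the integral gives $\sum_{k\in\Z^d}\langle k\rangle^{-\eta_1}\langle k-k_0\rangle^{-\eta_2}\lesssim\int_{\R^d}\langle x\rangle^{-\eta_1}\langle x-k_0\rangle^{-\eta_2}\,dx$, so it suffices to estimate $I(y):=\int_{\R^d}\langle x\rangle^{-\eta_1}\langle x-y\rangle^{-\eta_2}\,dx$ with $y=k_0$; the case $|y|\lesssim 1$ is trivial, so assume $|y|\geq 2$. I would split $\R^d$ into the four regions $B(0,|y|/2)$, $B(y,|y|/2)$, the annular set $\{|y|/2\le|x|\le 2|y|\}\setminus B(y,|y|/2)$, and the exterior $\{|x|>2|y|\}$. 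On $B(0,|y|/2)$ one has $\langle x-y\rangle\sim\langle y\rangle$, so this piece is $\lesssim\langle y\rangle^{-\eta_2}\int_{|x|\lesssim|y|}\langle x\rangle^{-\eta_1}\,dx$, which is $\lesssim\langle y\rangle^{d-\eta_1-\eta_2}$ when $\eta_1<d$ (with an extra $\log\langle y\rangle$, resp. no loss, when $\eta_1=d$, resp. $\eta_1>d$, cases that can occur only in (iii)); on $B(y,|y|/2)$, symmetrically, $\langle x\rangle\sim\langle y\rangle$ and the piece is $\lesssim\langle y\rangle^{-\eta_1}\int_{|z|\lesssim|y|}\langle z\rangle^{-\eta_2}\,dz$, which is $\langle y\rangle^{d-\eta_1-\eta_2}$ when $\eta_2<d$, $\langle y\rangle^{-\eta_1}\log\langle y\rangle$ when $\eta_2=d$, and $\langle y\rangle^{-\eta_1}$ when $\eta_2>d$; on the annular region $\langle x\rangle\sim\langle y\rangle$ and the integral in $z=x-y$ runs over a bounded-ratio annulus, giving $\lesssim\langle y\rangle^{d-\eta_1-\eta_2}$ with no logarithm; and on the exterior $\langle x-y\rangle\sim\langle x\rangle$, so the integrand is $\lesssim\langle x\rangle^{-(\eta_1+\eta_2)}$ and, as $\eta_1+\eta_2>d$, the tail integral is $\lesssim\langle y\rangle^{d-\eta_1-\eta_2}$. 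Summing the four contributions: in case (i) every piece is $\lesssim\langle y\rangle^{d-\eta_1-\eta_2}$; in case (ii) every piece is $\lesssim\langle y\rangle^{-\eta_1}$ up to a factor $\log\langle y\rangle$ coming from the near-diagonal pieces; and in case (iii) every piece is $\lesssim\langle y\rangle^{-\eta_1}$, using $d-\eta_1-\eta_2<-\eta_1$ for those that scale like $\langle y\rangle^{d-\eta_1-\eta_2}$.

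For (iv) I would induct on $n$, the case $n=1$ being trivial since $n\eta-(n-1)d=\eta$. For the step from $n-1$ to $n$ (with $n\geq 2$) I would sum over $k_n$ last, writing the sum as $\sum_{k_n}\langle k_n\rangle^{-\eta}\big(\sum_{k_1+\cdots+k_{n-1}=k-k_n}\prod_{j=1}^{n-1}\langle k_j\rangle^{-\eta}\big)$. Since $\tfrac{(n-2)d}{n-1}<\tfrac{(n-1)d}{n}$, the hypothesis $\eta>\tfrac{(n-1)d}{n}$ already guarantees the hypothesis required to invoke the inductive statement at level $n-1$, so the inner sum is $\lesssim\langle k-k_n\rangle^{-\nu}$ with $\nu:=(n-1)\eta-(n-2)d$. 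It then remains to bound $\sum_{k_n}\langle k_n\rangle^{-\eta}\langle k-k_n\rangle^{-\nu}$, which after the substitution $k_n\mapsto k-k_n$ is exactly of the form treated in (i) for the pair of exponents $(\nu,\eta)$. One checks the hypotheses of (i): $0\le\nu\le\eta<d$ follows from $\eta<d$ together with $\eta>\tfrac{(n-1)d}{n}>\tfrac{(n-2)d}{n-1}$, and $\nu+\eta=n\eta-(n-2)d>d$ is precisely the assumption $\eta>\tfrac{(n-1)d}{n}$. Hence (i) gives $\langle k\rangle^{-(\nu+\eta-d)}=\langle k\rangle^{-(n\eta-(n-1)d)}$, which closes the induction.

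I expect no serious obstacle: these are all standard estimates and the work is essentially bookkeeping. The three points that need some care are the sum-versus-integral comparison near the lattice points $k=0$ and $k=k_0$ (harmless, since the weights are built from $\langle\cdot\rangle$ and never blow up), tracking whether each of $\eta_1,\eta_2$ lies below, at, or above $d$ so that every ball integral is assigned the correct power — and the correct logarithm in case (ii) — and, in (iv), verifying that the full chain of admissibility conditions needed to apply (i) collapses to the single sharp hypothesis $\eta>\tfrac{(n-1)d}{n}$. Thus the only genuinely delicate part is the exponent arithmetic, not any analytic difficulty.
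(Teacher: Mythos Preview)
Your argument is correct and is the standard way to prove these convolution estimates; the region decomposition for (i)--(iii) and the induction via (i) for (iv) are both routine and carried out accurately, including the exponent bookkeeping. The paper itself gives no proof beyond the single sentence ``The proof follows from elementary calculus,'' so your proposal is precisely the kind of elementary argument the authors have in mind, only with the details filled in.
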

	\begin{proof}
		The proof follows from elementary calculus.
	\end{proof}
	
	\begin{remarque}
		For $(\mathrm{iv})$, we only need $\eta > \frac{(n-1)d}{n}$ for the left hand side above to be summable, while $\eta < d$ is needed in order for the exponent of $\langle k\rangle$ to be $-n \eta + (n-1)d$. 
	\end{remarque}

	\begin{cor}\label{cor:convolution2} 
		Let $l\in \N$ and $1\leq j\leq l$. Then for all $\alpha\in(0,1), 0<\eps\ll 1$ and $\gamma=l(1-\alpha)+\eps$, there exists $C_{\alpha,\eps}>0$, such that for any $N<M$, we have
		\begin{align*}
			&\sum_{\substack{N< |k_1|,\cdots,|k_j|\leq M\\
		|k_{j+1}|,\cdots,|k_l|\leq M 	
		} }\frac{1}{\langle k_1+\cdots+k_l\rangle^{2\gamma}} \prod_{i=1}^{l}\frac{1}{\langle k_i\rangle^{2\alpha}}
		 \leq C_{\alpha,\eps}N^{-2\eps}.
		\end{align*}
	\end{cor}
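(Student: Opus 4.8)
The plan is to first peel off the decaying factor $N^{-2\eps}$ using a single one of the low‑frequency constraints, thereby reducing the statement to an $N$‑free estimate that is \emph{uniform in} $M$, which I would then prove by induction on $l$. For the reduction: since $j\geq 1$, I would keep only the constraint $|k_1|>N$ and discard the constraints $|k_i|>N$ for $2\leq i\leq j$, which only enlarges the sum. It then suffices to establish
\begin{equation}\label{eq:cor_aux}
\sup_{M\in\N}\,\sup_{k_1\in\Z^2}\,\langle k_1\rangle^{2(1-\alpha)+2\eps}\sum_{|k_2|,\dots,|k_l|\leq M}\frac{1}{\langle k_1+k_2+\cdots+k_l\rangle^{2\gamma}}\prod_{i=2}^{l}\frac{1}{\langle k_i\rangle^{2\alpha}}<+\infty ,
\end{equation}
since then the left‑hand side of the corollary is $\lesssim\sum_{N<|k_1|\leq M}\langle k_1\rangle^{-2\alpha-2(1-\alpha)-2\eps}=\sum_{N<|k_1|\leq M}\langle k_1\rangle^{-2-2\eps}\lesssim_{\eps}N^{-2\eps}$, by comparison with an integral on $\R^2$. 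The exponent $2(1-\alpha)+2\eps$ in \eqref{eq:cor_aux} is exactly the one forced by homogeneity: the $l-1$ summation variables carry total decay $2\gamma+2(l-1)\alpha=2l-2\alpha+2\eps$ over a lattice of dimension $2(l-1)$, leaving a surplus of $2(1-\alpha)+2\eps$ attached to $k_1$.

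For \eqref{eq:cor_aux} I would induct on $l$. The base case $l=1$ is the identity $\langle k_1\rangle^{-2\gamma}=\langle k_1\rangle^{-2(1-\alpha)-2\eps}$. For the step, write $2\gamma=2\gamma_{l-1}+2(1-\alpha)$ with $2\gamma_{l-1}:=2(l-1)(1-\alpha)+2\eps$, put $P:=k_1+k_2+\cdots+k_{l-1}$, and aim to show $\sum_{|k_l|\leq M}\langle k_l\rangle^{-2\alpha}\langle P+k_l\rangle^{-2\gamma}\lesssim\langle P\rangle^{-2\gamma_{l-1}}$; this reduces the whole sum to the $(l-1)$‑instance of \eqref{eq:cor_aux} and closes the induction. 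When $2\gamma<2$ this is immediate from Lemma~\ref{lem:convolutionBasic}(i): both exponents are below $2$, their sum $2\alpha+2\gamma>2$ is equivalent to $(l-1)(1-\alpha)+\eps>0$, and the output exponent $2\alpha+2\gamma-2$ equals $2\gamma_{l-1}$.

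The hard part is the regime $2\gamma\geq 2$, i.e.\ once the running exponent on the total‑frequency slot exceeds the ambient dimension $d=2$. There I would split the $k_l$‑sum into the region $|k_l|\gtrsim\langle P\rangle$, where $\langle P+k_l\rangle\sim\langle k_l\rangle$ and the sum is $\lesssim\sum_{|k_l|\gtrsim\langle P\rangle}\langle k_l\rangle^{-2\alpha-2\gamma}\lesssim\langle P\rangle^{-(2\alpha+2\gamma-2)}=\langle P\rangle^{-2\gamma_{l-1}}$, and the near‑cancellation region $|k_l|\lesssim\langle P\rangle$. In the near‑cancellation region a single use of Lemma~\ref{lem:convolutionBasic}(iii) is too wasteful — it returns only $\langle P\rangle^{-2\alpha}$, which would impose an unwanted restriction $\alpha>1-\tfrac1l$ depending on $\deg V$ — so instead I would spend the large surplus of $q$‑decay by first collapsing a block $B$ of the remaining variables via Lemma~\ref{lem:convolutionBasic}(iv) (legitimate for $|B|<\tfrac1{1-\alpha}$, where it trades $|B|$ factors $\langle\cdot\rangle^{-2\alpha}$ for a single $\langle\cdot\rangle^{-(2|B|\alpha-2(|B|-1))}$), and then run the one‑variable estimates, interleaving blocking and peeling so that each step reproduces exactly a lower instance of \eqref{eq:cor_aux}; the homogeneity budget computed above always leaves exactly the surplus $2(1-\alpha)+2\eps$, so this can be made to close. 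Throughout, the standing assumption $\alpha>\tfrac12$ (with $\eps$ small compared to $1-\alpha$) is what keeps every intermediate one‑variable sum summable with an $M$‑independent bound, and this exponent bookkeeping across the induction — especially in the near‑cancellation regime once $2\gamma>d$ — is the main obstacle.
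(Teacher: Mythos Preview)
Your reduction to a uniform-in-$M$ estimate and your treatment of the case $2\gamma<2$ are correct and coincide with the paper's proof: one applies part (i) of Lemma~\ref{lem:convolutionBasic} successively in $k_l,k_{l-1},\dots,k_2$, each step turning the running exponent $2\gamma_i=2i(1-\alpha)+2\eps$ on the partial sum into $2\gamma_{i-1}$, and finishes with $\sum_{|k_1|>N}\langle k_1\rangle^{-2-2\eps}\lesssim N^{-2\eps}$. The only requirement is $2\gamma_i<2$ for every $i$, i.e.\ $l(1-\alpha)+\eps<1$.

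The genuine gap is in your ``hard part'' $2\gamma\geq 2$: there the inequality is simply \emph{false}, so no blocking--peeling scheme can close. If $l(1-\alpha)\geq 1$, restrict the sum to the resonant slice $k_l=-(k_1+\cdots+k_{l-1})$ and to $|k_1|,\dots,|k_{l-1}|\in[R,2R]$ with $N<R<M/(2l)$; this already contributes
\[
\gtrsim R^{2(l-1)}\cdot R^{-2l\alpha}=R^{2l(1-\alpha)-2}\geq 1,
\]
hence the full sum diverges as $M\to\infty$. For a concrete instance take $l=3$, $\alpha=3/5$ (so $\alpha>\tfrac12$): the sum grows like $M^{2/5}$. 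Thus your assertion that ``$\alpha>\tfrac12$ keeps every intermediate one-variable sum summable with an $M$-independent bound'' is incorrect in this regime, and the near-cancellation analysis you sketch cannot produce the needed $\langle P\rangle^{-2\gamma_{l-1}}$ --- there is not enough total decay available.

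In the paper's context this is a non-issue: the corollary is only invoked with $l\leq 3$ and $\alpha>\tfrac34$ (in fact $\alpha>\tfrac89$), so $l(1-\alpha)<\tfrac34<1$ and only the easy case occurs. The hypothesis ``$\alpha\in(0,1)$'' in the statement is imprecise; the proof the paper gives --- and your argument when $2\gamma<2$ --- uses exactly the implicit restriction $l(1-\alpha)<1$.
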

	\begin{proof}
	Thanks to the assumption on $\alpha$ and $\gamma$, we repeatedly applied (i) in Lemma \ref{lem:convolutionBasic} $l-1$ times, we control the desired summation by
	$$ \sum_{N<|k_1|\leq M}\langle k_1\rangle^{-2(\alpha+\gamma-(l-1)(1-\alpha) )}= \sum_{N<|k_1|\leq M}\langle k_1\rangle^{-2(1+\eps)}\lesssim N^{-2\eps}. 
	$$
	This completes the proof.
	\end{proof}

	\section{Large deviation estimates} \label{appendix:largedeviation}

	\begin{lem}\label{continuity} 
		Let $\xi$ be a random process of the form
		such that for any $s,t\in\R$, $\xi(s,\cdot)$ and $\xi(t,\cdot)$ have the same law that is stationary in $x\in\T^2$. Assume that
		for some $\gamma\in\R$, $(\langle\nabla\rangle^{\gamma}\xi)(t,x)$ belongs to  $\mathcal{H}_{\leq l}$, the space of Wiener chaos of degree less than $l$, and moreover
		$$ \sup_{t\in\R}\mathbb{E}[\|\xi(t,x)\|_{H^{\gamma}(\T^2)}^2]\leq A^2
		$$
		for some $A>0$.
		Then for any $\gamma_1<\gamma$, there exist $C_{\gamma,q,r},c_{\gamma,q,r}>0$, such that for all $\lambda>1$, $T>1$ and $q\geq 2$, $r\geq 2$,
		\begin{align}\label{eq:continuity1} 
			\mathbb{P}\Big[\|\xi\|_{L_t^qW_x^{\gamma,r}([0,T]\times\T^2))}> \lambda \Big]\leq C_{\gamma,q,r}\exp\big(-c_{\gamma,q,r}T^{-\frac{2}{ql}}A^{-\frac{2}{l}}\lambda^{\frac{2}{l}}\big).
		\end{align}
	\end{lem}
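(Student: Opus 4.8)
The plan is to run the standard moment-generating route: estimate all polynomial moments of the space-time norm, then optimize the exponent in Chebyshev's inequality. (Note that the norm in the conclusion should be read with $\gamma_1$, not $\gamma$; we prove it with the weaker weight $\gamma_1\le\gamma$, the interesting case being $\gamma_1<\gamma$.) Set $\|\xi\|_\star:=\|\xi\|_{L_t^qW_x^{\gamma_1,r}([0,T]\times\T^2)}=\big\|\,\|(\langle\nabla\rangle^{\gamma_1}\xi)(t,\cdot)\|_{L_x^r}\,\big\|_{L_t^q}$. The target of the main estimate is: for every $p\ge\max(q,r)$,
\[
\big\|\,\|\xi\|_\star\,\big\|_{L^p(\Omega)}\le C_{q,r}\,p^{l/2}\,A\,T^{1/q}.
\]
Granting this, Chebyshev gives $\mathbb P[\|\xi\|_\star>\lambda]\le\big(C_{q,r}p^{l/2}AT^{1/q}/\lambda\big)^p$; choosing $p=\big(\lambda/(eC_{q,r}AT^{1/q})\big)^{2/l}$, which exceeds $\max(q,r)$ precisely once $\lambda$ is above a threshold of the shape $eC_{q,r}AT^{1/q}(\max(q,r))^{l/2}$, makes the bracket equal to $e^{-1}$ and yields $\mathbb P[\|\xi\|_\star>\lambda]\le\exp\big(-c_{q,r}T^{-2/(ql)}A^{-2/l}\lambda^{2/l}\big)$. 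For $\lambda$ below that threshold the probability is trivially $\le1$, which is dominated by the right-hand side after enlarging $C_{\gamma,q,r}$; this also makes the hypothesis $\lambda>1$ harmless. Thus everything reduces to the displayed moment bound.

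To obtain the moment bound I would first push the probabilistic $L^p$ norm inside the space-time norms. Since $p\ge q$ and $p\ge r$, two applications of Minkowski's integral inequality give $\big\|\,\|\xi\|_\star\,\big\|_{L^p(\Omega)}\le\big\|\,\big\|\,\|(\langle\nabla\rangle^{\gamma_1}\xi)(t,x)\|_{L^p(\Omega)}\,\big\|_{L_x^r}\,\big\|_{L_t^q}$, so it suffices to bound $\|(\langle\nabla\rangle^{\gamma_1}\xi)(t,x)\|_{L^p(\Omega)}$ uniformly in $(t,x)$, the remaining $t,x$-integrations contributing only the factor $T^{1/q}|\T^2|^{1/r}$. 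For the pointwise bound I would invoke hypercontractivity: $(\langle\nabla\rangle^{\gamma_1}\xi)(t,x)\in\mathcal H_{\le l}$ (see the next paragraph), so Proposition~\ref{Wienerchaos} gives $\|(\langle\nabla\rangle^{\gamma_1}\xi)(t,x)\|_{L^p(\Omega)}\le(p-1)^{l/2}\|(\langle\nabla\rangle^{\gamma_1}\xi)(t,x)\|_{L^2(\Omega)}$. Finally, stationarity in $x$ together with $t$-independence of the law pins down the $L^2(\Omega)$ norm: since $\mathbb E|(\langle\nabla\rangle^{\gamma_1}\xi)(t,x)|^2$ is independent of $(t,x)$, it equals (up to the harmless normalization constant) $\mathbb E\|\xi(t,\cdot)\|_{H^{\gamma_1}}^2\le\mathbb E\|\xi(t,\cdot)\|_{H^{\gamma}}^2\le A^2$, using $\gamma_1\le\gamma$. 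Combining these three facts gives the claimed $\big\|\,\|\xi\|_\star\,\big\|_{L^p(\Omega)}\lesssim_{q,r}p^{l/2}AT^{1/q}$.

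The one point that needs a little care — and the closest thing to an obstacle — is justifying that the weighted, pointwise-evaluated random variable $(\langle\nabla\rangle^{\gamma_1}\xi)(t,x)$ still lies in $\mathcal H_{\le l}$, given only that $(\langle\nabla\rangle^{\gamma}\xi)(t,x)$ does. Here I would argue as follows: spatial stationarity means every translation of $\T^2$ acts as a measure-preserving transformation of the underlying Gaussian space, hence preserves each homogeneous chaos $\mathcal H_k$ and therefore the closed subspace $\mathcal H_{\le l}$ of $L^2(\Omega)$; writing $\langle\nabla\rangle^{\gamma_1}=\langle\nabla\rangle^{\gamma_1-\gamma}\langle\nabla\rangle^{\gamma}$ with $\gamma_1-\gamma\le0$, the operator $\langle\nabla\rangle^{\gamma_1-\gamma}$ is convolution against an integrable kernel on $\T^2$, so $(\langle\nabla\rangle^{\gamma_1}\xi)(t,x)$ is a Bochner average of translates of the $\mathcal H_{\le l}$-valued field $y\mapsto(\langle\nabla\rangle^{\gamma}\xi)(t,y)$, hence remains in $\mathcal H_{\le l}$ (for $\gamma_1=\gamma$ there is nothing to prove). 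The remaining ingredients — the two Minkowski swaps, measurability of the norms, the elementary optimization in $p$, and the absorption of the small-$\lambda$ regime into the prefactor — are routine, so I expect no further difficulty.
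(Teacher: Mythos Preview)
Your proof is correct and follows essentially the same route as the paper: Chebyshev for $p\ge\max(q,r)$, Minkowski to push $L^p_\omega$ inside $L^q_tL^r_x$, hypercontractivity (Proposition~\ref{Wienerchaos}) for the pointwise moment, stationarity in $(t,x)$ to reduce to the $H^\gamma$ bound, then optimization in $p$. The only difference is that the paper works directly with the weight $\gamma$ rather than $\gamma_1$: since the hypothesis already asserts $(\langle\nabla\rangle^{\gamma}\xi)(t,x)\in\mathcal H_{\le l}$, your Bochner-average argument to propagate chaos membership to the $\gamma_1$-weighted field is not needed (the parameter $\gamma_1$ in the statement is in fact unused in the paper's proof and appears to be a vestigial typo).
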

	\begin{proof}
For any $p\geq q,r$, by Chebyshev,
		\begin{align*}
			\mathbb{P}\big[\|\xi\|_{L_t^qW_x^{\gamma,r}([0,T]\times\T^2))}> \lambda \big]\leq & \frac{C_{\gamma,\gamma_1}^p}{\lambda^p}\mathbb{E}[\|\xi\|_{L_t^qW_x^{\gamma,r}([0,T]\times\T^2) }^p].
		\end{align*}
		By Minkowski,
		$$	\big[\mathbb{E}\|\xi(t)\|_{L_t^qW_x^{\gamma,r}([0,T]\times\T^2)}^p\big]^{\frac{1}{p}}\leq \|\langle\nabla\rangle^{\gamma}\xi(t,x)\|_{L_t^qL_x^rL_{\omega}^p}
		$$
		Since for fixed $t,x$ $(\langle\nabla\rangle^{\gamma}\xi)(t,x)\in\mathcal{H}_{\leq l}$
		and $\xi(t)$ is stationary in space and time, by Proposition \ref{Wienerchaos}, we deduce that
		$$ \|\langle\nabla\rangle^{\gamma}\xi(t,x)\|_{L_t^qL_x^rL_{\omega}^p}
		\leq Cp^{\frac{l}{2}}T^{\frac{1}{q}}\sup_{t\in\R}\mathbb{E}\big[\|\xi(t)\|_{H_x^{\gamma}}^2]^{\frac{1}{2}}\leq Cp^{\frac{l}{2}}T^{\frac{1}{q}}A.
		$$
		Therefore,
		\begin{align*}
			\mathbb{P}\big[\|\xi\|_{L_t^qW_x^{\gamma,r}([0,T]\times\T^2))}> \lambda \big]\leq & \frac{C_{\gamma,\gamma_1}^p p^{\frac{lp}{2}}T^{\frac{p}{q}}A^p }{\lambda^p}.
		\end{align*}
		By optimizing the choice of $p$,
		the proof of Lemma \ref{continuity} is now complete.
	\end{proof}



	\begin{lem} \label{le:Fourier_criteria}
		If $\Xi$ be a stationary random distribution on $\T^d$ and belongs to Wiener chaos of order $n$. Let $\{\widehat{\Xi}(k)\}_{k \in \Z^d}$ denote its Fourier coefficients. If there exists $\gamma \in \R$ and $C_0 > 0$ such that
		\begin{equation*}
			\E |\widehat{\Xi}(k)|^{2} \leq C_o \langle k\rangle^{-d+2\gamma}
		\end{equation*}
		for every $k \in \Z^d$, then for every $\sigma> \gamma$ and every $q \in [1,+\infty)$, we have
		\begin{equation*}
		\E \|\Xi\|_{H^{-\sigma}(\T^d)}^{2}+	\E \|\Xi\|_{\cC^{-\sigma}(\T^d)}^{q} \leq C\;, 
		\end{equation*}
		where $C$ depends on $q,n,\gamma,\sigma,d$ and $C_0$ only. In particular, the bound is uniform in the class of stationary processes in order $n$ that satisfies the above bound for Fourier coefficients. 
	\end{lem}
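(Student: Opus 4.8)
The plan is to bound the two norms separately, the $H^{-\sigma}$ part being essentially free and the $\cC^{-\sigma}$ part requiring hypercontractivity together with a Bernstein estimate. First I would record that, since $\Xi$ is stationary, its Fourier modes are pairwise uncorrelated, and therefore
$$\E\|\Xi\|_{H^{-\sigma}(\T^d)}^2 = \sum_{k\in\Z^d}\langle k\rangle^{-2\sigma}\,\E|\widehat\Xi(k)|^2 \leq C_0\sum_{k\in\Z^d}\langle k\rangle^{2\gamma-2\sigma-d},$$
and the last sum is finite precisely because $\sigma>\gamma$ (the exponent is $<-d$), giving a bound depending only on $d,\gamma,\sigma,C_0$.

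For the $\cC^{-\sigma}=B_{\infty,\infty}^{-\sigma}$ bound I would work one Littlewood--Paley block at a time. For each fixed $x$, $\mathbf{P}_j\Xi(x)$ belongs to the Wiener chaos of order $\leq n$, so Proposition~\ref{Wienerchaos} gives $\|\mathbf{P}_j\Xi(x)\|_{L^p(\Omega)}\leq (p-1)^{n/2}\|\mathbf{P}_j\Xi(x)\|_{L^2(\Omega)}$ for every $p\geq 2$, while stationarity (uncorrelated modes) together with the hypothesis yields, uniformly in $x$,
$$\|\mathbf{P}_j\Xi(x)\|_{L^2(\Omega)}^2 \lesssim \sum_{|k|\sim 2^j}\E|\widehat\Xi(k)|^2 \lesssim C_0\sum_{|k|\sim 2^j}\langle k\rangle^{2\gamma-d}\lesssim C_0\,2^{2j\gamma}.$$
I would then invoke Bernstein's inequality on the annulus $|k|\sim 2^j$, namely $\|\mathbf{P}_j\Xi\|_{L^\infty_x}\lesssim 2^{jd/p}\|\mathbf{P}_j\Xi\|_{L^p_x}$; raising to the power $p$, integrating in $x$, and inserting the previous display gives
$$\E\|\mathbf{P}_j\Xi\|_{L^\infty_x}^p \lesssim 2^{jd}\int_{\T^d}\E|\mathbf{P}_j\Xi(x)|^p\,dx \lesssim (p-1)^{np/2}\,C_0^{p/2}\,2^{jd+jp\gamma},$$
hence $\big\|\,2^{-j\sigma}\|\mathbf{P}_j\Xi\|_{L^\infty_x}\,\big\|_{L^p(\Omega)}\lesssim (p-1)^{n/2}C_0^{1/2}\,2^{-j(\sigma-\gamma-d/p)}$.

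To conclude I would pick $p$ large enough that $p\geq\max(2,q)$ and $\sigma-\gamma-d/p>0$, bound the supremum defining $\|\cdot\|_{\cC^{-\sigma}}$ by the $\ell^p_j$ norm, take $L^p(\Omega)$ norms, exchange sum and expectation, and sum the resulting geometric series over $j\geq-1$; this yields $\E\|\Xi\|_{\cC^{-\sigma}}^p\leq C$ with $C=C(d,n,\gamma,\sigma,q,C_0)$, and the claimed bound for a general exponent $q\in[1,\infty)$ follows from $\E\|\Xi\|_{\cC^{-\sigma}}^q\leq(\E\|\Xi\|_{\cC^{-\sigma}}^p)^{q/p}$. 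Since every step is explicit in $C_0$ and the implied constants depend only on $d$ and $p$ (Littlewood--Paley and Bernstein constants), the bound is uniform over the stated class. The main point to be careful about — and the only genuine difficulty — is the passage from the pointwise-in-$x$, $L^2(\Omega)$ control of $\mathbf{P}_j\Xi$ to $L^\infty$-in-$x$ control: I would handle it exactly as above, first upgrading $L^2(\Omega)$ to $L^p(\Omega)$ by hypercontractivity and then $L^p_x$ to $L^\infty_x$ by Bernstein, the cost being the harmless loss $2^{jd/p}$ that is absorbed by taking $p$ large relative to $d/(\sigma-\gamma)$.
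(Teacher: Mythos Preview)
Your proof is correct. The paper actually states this lemma without proof (it is a standard regularity criterion for stationary chaos fields; see e.g.\ \cite{phi43_pedestrians}), so there is no paper argument to compare against. Your approach---hypercontractivity to upgrade the pointwise second moment of each Littlewood--Paley block to an $L^p(\Omega)$ moment, then Bernstein to pass from $L^p_x$ to $L^\infty_x$ at the cost $2^{jd/p}$, and finally choosing $p$ large enough that $\sigma-\gamma-d/p>0$ so the geometric series over $j$ converges---is exactly the standard one. One minor remark: for the $H^{-\sigma}$ bound you do not actually need the uncorrelatedness of Fourier modes, since $\|\Xi\|_{H^{-\sigma}}^2=\sum_k\langle k\rangle^{-2\sigma}|\widehat\Xi(k)|^2$ already; the uncorrelatedness (equivalently, stationarity) is genuinely used only in your computation of $\E|\mathbf P_j\Xi(x)|^2$, where it kills the off-diagonal terms and makes the bound independent of $x$.
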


	\begin{lem} \label{le:Wick_bd}
		Recall that
		\begin{equation*}
			W_{N} \stackrel{law}{=} \sum_{k \in \Z^2,\; |k|\leq N} \frac{g_k }{\langle k\rangle^{\alpha}} \mathrm{e}^{ik\cdot x}
		\end{equation*}
		is the fractional Gaussian field on $\T^2$, where $\rho\in \mathcal{S}(\R)$. If $\sigma>0$, $\theta \geq 0$ and $n \in \N$ satisfy
		\begin{equation*}
			\sigma \wedge 1 > n(1-\alpha)-\theta>0\;, 
		\end{equation*}
		then for every $q \in [1,+\infty)$, there exists $C=C(q,n,\alpha,\sigma,\theta)$ such that
		\begin{equation*}
		\sup_{N} \Big( N^{-2 \theta} \E \|W_{N}^{\diamond n}\|_{H^{-\sigma}}^{2} \Big)+	\sup_{N} \Big( N^{-q \theta} \E \|W_{N}^{\diamond n}\|_{\cC^{-\sigma}}^{q} \Big) < C\;.
		\end{equation*}
		As a consequence, the same is true when $\cC^{-\sigma}$ is replaced by $W^{-\sigma,p}$ for every $p \geq 1$. 
	\end{lem}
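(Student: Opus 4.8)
The plan is to reduce the statement, via Lemma~\ref{le:Fourier_criteria}, to a uniform-in-$N$ pointwise bound on the second moments of the Fourier coefficients of $N^{-\theta} W_N^{\diamond n}$. Write $\mu := n(1-\alpha)$, so the hypothesis reads $0 < \mu - \theta < \sigma \wedge 1$. Since $W_N(x)$ is a centred stationary Gaussian field with $\E|W_N(x)|^2 = \widetilde{\sigma}_N^2 = \sum_{|k|\le N}\langle k\rangle^{-2\alpha}$, the Wick power $W_N^{\diamond n}(x) = H_n(W_N(x);\widetilde{\sigma}_N^2)$ lies in the $n$-th homogeneous Wiener chaos, is stationary in $x$, and has the multilinear expansion $\widehat{W_N^{\diamond n}}(k) = \sum_{k_1+\cdots+k_n=k,\ |k_i|\le N} \langle k_1\rangle^{-\alpha}\cdots\langle k_n\rangle^{-\alpha}\, {:}g_{k_1}\cdots g_{k_n}{:}$, where ${:}\cdot{:}$ denotes the Wick product of the Gaussians. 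Orthogonality of Wiener chaoses together with $\overline{g_k}=g_{-k}$ then gives, for every $k\in\Z^2$, the bound $\E|\widehat{W_N^{\diamond n}}(k)|^2 \le n!\, S_N(k)$, where $S_N(k) := \sum_{k_1+\cdots+k_n=k,\ |k_i|\le N}\prod_{i=1}^n\langle k_i\rangle^{-2\alpha}$. (The leftover ``$\rho\in\mathcal S(\R)$'' in the statement refers to a possible smooth cutoff in place of the sharp one; since $0\le\rho\le 1$ this only improves the estimate and nothing below changes.)

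The core of the argument is the convolution estimate $S_N(k) \lesssim_n N^{2\theta}\langle k\rangle^{-2+2(\mu-\theta)}$ with a constant independent of $N$. To prove it I set $\alpha' := \theta/n$. From $\mu-\theta>0$ one gets $\alpha+\alpha'<1$, and from $\mu-\theta<1$ one gets $\alpha+\alpha'>\tfrac{n-1}{n}$; hence $\tfrac{n-1}{n}<\alpha+\alpha'<1$, which is exactly the admissible window of Lemma~\ref{lem:convolutionBasic}(iv) for $\eta=2(\alpha+\alpha')$ and $d=2$. Using the elementary inequality $\langle k_i\rangle^{-2\alpha}\mathbf 1_{|k_i|\le N}\le \langle N\rangle^{2\alpha'}\langle k_i\rangle^{-2(\alpha+\alpha')}\lesssim N^{2\alpha'}\langle k_i\rangle^{-2(\alpha+\alpha')}$ on each of the $n$ factors, followed by Lemma~\ref{lem:convolutionBasic}(iv), gives $S_N(k)\lesssim N^{2n\alpha'}\langle k\rangle^{-(2n(\alpha+\alpha')-2(n-1))} = N^{2\theta}\langle k\rangle^{-2+2(\mu-\theta)}$, as claimed. (When $\theta=0$ one necessarily has $\mu<1$ and this is just Lemma~\ref{lem:convolutionBasic}(iv) with $\eta=2\alpha$; the boundary case $\mu=1$ is ruled out by $\mu-\theta>0$. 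The same bound can alternatively be read off from the type of computation in Corollary~\ref{cor:convolution2}.)

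Setting $\gamma := \mu-\theta\in(0,\sigma\wedge 1)$ and $\Xi_N := N^{-\theta}W_N^{\diamond n}$, the two displays above yield $\E|\widehat{\Xi_N}(k)|^2 \le C_0\langle k\rangle^{-2+2\gamma}$ with $C_0$ independent of $N$, and $\Xi_N$ is stationary and lies in the $n$-th chaos. Lemma~\ref{le:Fourier_criteria} with $d=2$ then gives, for every $q\in[1,\infty)$ and every $\sigma>\gamma$ (which includes the given $\sigma$), the bound $\E\|\Xi_N\|_{H^{-\sigma}}^2 + \E\|\Xi_N\|_{\cC^{-\sigma}}^q \le C(q,n,\alpha,\sigma,\theta)$, uniformly in $N$ because the constant there depends only on the chaos order and on $C_0$. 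Unwinding $\Xi_N=N^{-\theta}W_N^{\diamond n}$ produces the two asserted supremum bounds. The $W^{-\sigma,p}$ version follows by choosing $\gamma<\sigma'<\sigma$, applying the $\cC^{-\sigma'}$ bound just obtained, and using the Besov embedding $\cC^{-\sigma'}=B^{-\sigma'}_{\infty,\infty}\hookrightarrow B^{-\sigma'}_{\infty,1}\hookrightarrow W^{-\sigma,p}(\T^2)$ valid for any $p\in[1,\infty)$.

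The only genuinely non-routine step is the convolution estimate $S_N(k)\lesssim N^{2\theta}\langle k\rangle^{-2+2(\mu-\theta)}$; everything else is bookkeeping. The mechanism is that each sharply truncated factor $\langle k_i\rangle^{-2\alpha}\mathbf 1_{|k_i|\le N}$ can be traded for the slightly more summable weight $\langle k_i\rangle^{-2(\alpha+\theta/n)}$ at the cost of a factor $N^{2\theta/n}$, which is \emph{exactly} enough to make the $n$-fold convolution of the heavier weight summable — since $n\bigl(1-\alpha-\tfrac{\theta}{n}\bigr)=\mu-\theta<1$ — while generating precisely $N^{2\theta}$ overall. The point requiring care is that the shifted exponent $\alpha+\theta/n$ must remain inside the window $\bigl(\tfrac{n-1}{n},1\bigr)$ demanded by Lemma~\ref{lem:convolutionBasic}(iv), and this is guaranteed exactly by the two inequalities $0<\mu-\theta<1$ built into the hypothesis; one must also make sure the constants coming out of Lemma~\ref{le:Fourier_criteria} are truly $N$-independent, which is why it is phrased as a uniform bound over the class of order-$n$ stationary processes with a given Fourier-coefficient bound.
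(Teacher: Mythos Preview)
Your proof is correct and follows essentially the same route as the paper: compute $\E|\widehat{W_N^{\diamond n}}(k)|^2$ via Wick orthogonality, trade each truncated factor $\langle k_i\rangle^{-2\alpha}\mathbf 1_{|k_i|\le N}$ for $N^{2\theta/n}\langle k_i\rangle^{-2(\alpha+\theta/n)}$, apply Lemma~\ref{lem:convolutionBasic}(iv) in the window $\tfrac{n-1}{n}<\alpha+\tfrac{\theta}{n}<1$ (equivalently $0<n(1-\alpha)-\theta<1$), and conclude via Lemma~\ref{le:Fourier_criteria}. One tiny slip: in your final embedding chain the middle space should carry the index $-\sigma$ rather than $-\sigma'$ (since $B^{-\sigma'}_{\infty,\infty}\not\hookrightarrow B^{-\sigma'}_{\infty,1}$ at the same regularity), but with $\sigma'<\sigma$ the intended inclusion $\cC^{-\sigma'}\hookrightarrow W^{-\sigma,p}(\T^2)$ is of course valid.
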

	\begin{proof}
		Without loss of generality, we can restrict to the situation where $n(1-\alpha) - \theta > 0$. Also, since $W_N^{\diamond n}$ belongs to Wiener chaos of order $n$, it suffices to prove for $q=2$. By explicit computation, we have
		\begin{equation*}
			\begin{split}
				\E |\widehat{W_N^{\diamond n}}(k)|^{2} &= \E \big( \sum_{\stackrel{k_1 + \cdots + k_n}{=k}} \widehat{W_N}(k_1) \diamond \cdots \diamond \widehat{W_N}(k_n) \big) \cdot \big( \sum_{\stackrel{\ell_1 + \cdots + \ell_n}{=k}} \widehat{W_N}(\ell_1) \diamond \cdots \diamond \widehat{W_N}(\ell_n) \big)\\
				&= n! \sum_{\stackrel{k_1 + \cdots + k_n}{=k}} \E |\widehat{W_N}(k_1)|^{2} \cdots \E |\widehat{W_N}(k_n)|^{2}\\
				&\lesssim \sum_{\stackrel{k_1 + \cdots + k_n}{=k}} \frac{1}{\langle k_1\rangle^{2\alpha}\cdots\langle k_n\rangle^{2\alpha} }\mathbf{1}_{|k_1|\leq N}\cdots\mathbf{1}_{|k_n|\leq N}\;. 
			\end{split}
		\end{equation*}
	We have
		\begin{equation*}
			N^{-2 \theta} \E |\widehat{W_N^{\diamond n}}(k)|^{2} \lesssim_{n} \sum_{\stackrel{k_1 + \cdots + k_n}{=k}} \prod_{j=1}^{n} \frac{1}{N^{\frac{2\theta}{n}} \big( 1 + |k_j|^{2\alpha}  \big)}\mathbf{1}_{|k_j|\leq N}\;, 
		\end{equation*}
		Hence, we get
		\begin{equation*}
			N^{-2 \theta} \E |\widehat{W_N^{\diamond n}}(k)|^{2} \lesssim \sum_{\stackrel{k_1 + \cdots + k_n}{=k}} \frac{1}{\langle k_1\rangle^{2 \alpha + \frac{2\theta}{n}}} \cdots \frac{1}{\langle k_n\rangle^{2 \alpha + \frac{2\theta}{n}}}\;. 
		\end{equation*}
		By (iv) of Lemma~\ref{lem:convolutionBasic}, if $\frac{2(n-1)}{n} < 2 \alpha + \frac{2\theta}{n} < 2$, we have the bound
		\begin{equation*}
			N^{-2\theta} \E |\widehat{W_N^{\diamond n}}(k)|^{2} \lesssim \langle k\rangle^{-2(\theta+1-n(1-\alpha))} = \langle k\rangle^{-2 + 2 (n(1-\alpha)-\theta }\;. 
		\end{equation*}
		Note that the above requirement is equivalently to our assumption
		\begin{equation*}
		0 < n(1-\alpha) - \theta < 1\;. 
		\end{equation*}
		 Now by Lemma~\ref{le:Fourier_criteria}, if
		\begin{equation*}
			\sigma > n(1-\alpha) - \theta\;, 
		\end{equation*}
		the desired bound follows. We have thus completed the proof of the Lemma \ref{le:Wick_bd}. 
	\end{proof}			
	Now we provide the proof of Lemma \ref{largedeviation}:
	\begin{proof}[Proof of Lemma \ref{largedeviation}]
		
		From the Sobolev embedding $W^{-\beta-\frac{\eps}{2},\frac{8}{\eps}}\hookrightarrow W^{-\beta-\eps,\infty}$ and Lemma \ref{newlargedeviation}, we deduce that there exist $C>0$ and  $\delta=\delta(\eps)>0$, such that
		$$ \mu(\Sigma_{2,N}^c)+\mu(\Sigma_{5,N}^c)<Ce^{-\delta(\eps)}.
		$$
		To estimate $\mu(\Sigma_{i,N}^c)$ for $i=1,3,4$, by Lemma \ref{continuity}, it suffices to show that for all $k\geq 4, 0\leq l\leq 3$, and $N\leq N_1\leq N$, we have the following estimates:
		\begin{align}
			& \mathbb{E}[\|\<1>_N^{\diamond l}-\<1>^{\diamond l}\|_{H^{-l\beta-\eps}}^2]+\mathbb{E}[\|\<1>_N^{\diamond l}-\<1>_{N_1}^{\diamond l}\|_{H^{-l\beta-\eps}}^2]\lesssim_{\eps} N^{-2\eps},\label{B2} \\
			&\mathbb{E}[\|\<1>_N^{\diamond l}\|_{H^{-l\beta-\eps}}^2]\lesssim_{\eps}1,\label{B3} \\
			& \mathbb{E}[\|\<1>_N^{\diamond k}\|_{H^{\frac{\eps}{2}}}^2]\lesssim_{\eps} N^{2k\beta+\frac{3\eps}{2}},\label{B4} \\
			&\mathbb{E}[\|\<1>_N^{\diamond (k-l)} \|_{H^{-(3-l)\beta-2\eps}}^2]\lesssim_{\eps} N^{2(k-3)\beta-\eps}. \label{B5}
		\end{align}
Note that \eqref{B3},\eqref{B4},\eqref{B5} are consequences of Lemma \ref{le:Wick_bd}, hence it remains to prove \eqref{B2}. 

Let $M> N$ and denote by $\gamma=l\beta+\eps$. Note that under the law $\mu$, $\<1>_N^{\diamond l}-\<1>_{M}^{\diamond l}$ is the same as $\phi_N^{\diamond l}-\phi_{M}^{\diamond l}$. Denote by
$$ \phi_{N,M}:=\sum_{N<|k|\leq M}\frac{g_k(\omega)}{\sqrt{1+|k|^{\alpha}}}\mathrm{e}^{ik\cdot x},\quad  \widetilde{\sigma}_{N,M}^2:=\widetilde{\sigma_M}^2-\widetilde{\sigma}_N^2.$$
Using the white noise functional representation as in Section 2,
$$ \phi_N(x)=\widetilde{\sigma}_NW_{\eta_N(x,\cdot)},\quad \phi_{N,M}(x)=\widetilde{\sigma}_NW_{\eta_{N,M}(x,\cdot)},
$$
where
$$ \eta_N(x,\cdot)=\sum_{|k|\leq N}\frac{1}{\sqrt{1+|k|^{2\alpha}}}\mathrm{e}^{ik\cdot(x+\cdot)},\quad  \eta_N(x,\cdot)=\sum_{N<|k|\leq M}\frac{1}{\sqrt{1+|k|^{2\alpha}}}\mathrm{e}^{ik\cdot(x+\cdot)}.
$$
Combining \eqref{binomHermite}, we can write
\begin{align*}
\phi_N^{\diamond l}-\phi_M^{\diamond l}=\sum_{j=1}^l\binom{l}{j}\widetilde{\sigma}_{N,M}^j\widetilde{\sigma}_N^{l-j}H_j\big(W_{\eta_N(x,\cdot)} \big) H_{l-j}\big(W_{\eta_{N,M}(x,\cdot)}\big).
\end{align*} 
Using \eqref{esperance-scalaire} and the independence of $W_{\eta_N(x,\cdot)}, W_{\eta_{N,M}(x,\cdot)}$, for $j\in\{1,\cdots,l\}$, we have
\begin{align*}
&\widetilde{\sigma}_{N,M}^{2j}\widetilde{\sigma}_N^{2(l-j)}\mathbb{E}\big[\big\|H_j(W_{\eta_N(x,\cdot)})H_{l-j}(W_{\eta_{N,M}(x,\cdot)}) \big\|_{H^{-\gamma}}^2\big]\\
\sim & \sum_{N<\substack{|k_1|,\cdots,|k_j|\leq M,\\
|k_{j+1}|,\cdots,|k_{l}|,|k|\leq M		
		\\
k_1+\cdots+k_l+k=0 } }\frac{1}{\langle k\rangle^{2\gamma}}\prod_{j=1}^l\frac{1}{\langle k_j\rangle^{2\alpha} }\lesssim N^{-2\eps},
\end{align*} 
thanks to Corollary \ref{cor:convolution2}. The proof of Lemma \ref{largedeviation} is now complete.
	\end{proof} 

	\section{Proof of the Strichartz estimate on $\T^d$} \label{appendix:strichartz}

	\begin{lem}\label{dispersive}
		Let $K_j^{\pm}(t,x-y)$ be the Schwartz kernel of the operator $e^{\pm it\dD^{\alpha}}\mathbf{P}_j$, $j\geq 0$. Then for any $t\neq 0$,
		\begin{align}\label{kernel}  \sup_{z\in\T^d}|K_j^{\pm}(t,z)|\lesssim \frac{2^{jd\big(1-\frac{\alpha}{2}\big)}}{|t|^{\frac{d}{2}}}. 
		\end{align}
		Consequently, for $t\geq 0$ and $2\leq r\leq\infty$,
		\begin{align}\label{eq:dispersive} \|e^{\pm it \dD^{\alpha}}\mathbf{P}_jf \|_{L^{r}(\T^d)}\lesssim \frac{2^{jd\big(1-\frac{\alpha}{2}\big)\big(1-\frac{2}{r}\big) }}{|t|^{\frac{d}{2}\big(1-\frac{2}{4}\big) } }\|\mathbf{P}_jf\|_{L^{r'}(\T^d)},
		\end{align}	
		where $r'$ is such that $\frac{1}{r}+\frac{1}{r'}=1$.
	\end{lem}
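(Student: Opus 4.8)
The plan is to deduce the $L^r$ bound \eqref{eq:dispersive} from the kernel bound \eqref{kernel} by interpolation, and to prove \eqref{kernel} by a stationary phase analysis after rescaling to unit frequency, returning from $\R^d$ to $\T^d$ by Poisson summation. For the reduction: since $e^{\pm it\dD^{\alpha}}\mathbf{P}_j$ is convolution with $K_j^{\pm}(t,\cdot)$, \eqref{kernel} gives the $L^1\to L^\infty$ bound with constant $\lesssim 2^{jd(1-\frac\alpha2)}|t|^{-\frac d2}$, while Plancherel gives the $L^2\to L^2$ bound with constant $1$; Riesz--Thorin interpolation between these endpoints yields \eqref{eq:dispersive}, the power of $|t|$ on the right being $\frac d2\bigl(1-\frac2r\bigr)$ (the displayed $\frac24$ being a misprint for $\frac2r$). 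So everything reduces to \eqref{kernel}.

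\textbf{Rescaling and stationary phase.} By Poisson summation, $K_j^{\pm}(t,z)=\sum_{n\in\Z^d}\widetilde K_j^{\pm}(t,z+2\pi n)$, where $\widetilde K_j^{\pm}(t,x):=\int_{\R^d}e^{\pm it(1+|\xi|^{2\alpha})^{1/2}}\varphi_j(\xi)e^{ix\cdot\xi}\,d\xi$ is the full-space kernel. I would substitute $\xi=2^j\eta$ and write $(1+2^{2j\alpha}|\eta|^{2\alpha})^{1/2}=2^{j\alpha}g_j(\eta)$ with $g_j(\eta):=(2^{-2j\alpha}+|\eta|^{2\alpha})^{1/2}$, so that $\widetilde K_j^{\pm}(t,x)=2^{jd}\int_{\R^d}e^{i\lambda\Phi(\eta)}\varphi(\eta)\,d\eta$ with large parameter $\lambda:=|t|2^{j\alpha}$ and phase $\Phi(\eta):=\pm\operatorname{sgn}(t)\,g_j(\eta)+b\cdot\eta$, $b:=2^{j(1-\alpha)}x/|t|$. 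The crucial observation is that the profiles $g_j$ are, \emph{uniformly over $j\ge0$ on the compact annulus $\operatorname{supp}\varphi$}, bounded in every $C^k$ and have $|\det D^2g_j|$ bounded below: indeed $g_j\to|\eta|^{\alpha}$ in $C^\infty(\operatorname{supp}\varphi)$, and the Hessian of $|\eta|^\alpha$ is nondegenerate there because $\alpha\ne1$ keeps both the radial curvature $\alpha(\alpha-1)|\eta|^{\alpha-2}$ and the $(d-1)$-fold tangential curvature $\alpha|\eta|^{\alpha-2}$ away from zero, so the bound holds for all large $j$ and, the remaining $j$ being finitely many, one takes a minimum over the compact set; similarly $C_*:=2\sup_{j\ge0}\sup_{\operatorname{supp}\varphi}|\nabla g_j|<\infty$. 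Since $D^2\Phi=\pm D^2g_j$ is nondegenerate, the standard stationary phase lemma (constants depending only on these uniform bounds) gives, for $\lambda\ge1$, $\bigl|\int e^{i\lambda\Phi}\varphi\bigr|\lesssim\lambda^{-d/2}$, hence $|\widetilde K_j^{\pm}(t,x)|\lesssim 2^{jd}\lambda^{-d/2}=2^{jd(1-\frac\alpha2)}|t|^{-\frac d2}$; for $\lambda\le1$ the trivial bound $|\widetilde K_j^{\pm}(t,x)|\lesssim 2^{jd}$ already satisfies $2^{jd}\le 2^{jd(1-\frac\alpha2)}|t|^{-\frac d2}$ (this inequality is exactly equivalent to $\lambda\le1$).

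\textbf{The Poisson tail.} When $|b|>C_*$ the phase has no critical point and $|\nabla\Phi|\ge|b|-|\nabla g_j|\gtrsim|b|$ throughout $\operatorname{supp}\varphi$, so repeated integration by parts gives $\bigl|\int e^{i\lambda\Phi}\varphi\bigr|\lesssim_N(\lambda|b|)^{-N}$ for every $N$; exploiting the algebraic identity $\lambda|b|=2^j|x|$ this becomes $|\widetilde K_j^{\pm}(t,x)|\lesssim_N2^{jd}(2^j|x|)^{-N}$. For the $n=0$ term of the Poisson sum I use the bound of the previous paragraph; for $n\ne0$ one has $|z+2\pi n|\gtrsim|n|$, so apart from the $O_d(1)$ indices with $|z+2\pi n|$ comparable to the light-cone radius $C_*|t|2^{j(\alpha-1)}$ (handled again by the first bound), the remaining terms decay like $2^{jd}(2^j|n|)^{-N}$ and sum to a harmless remainder. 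This closes the argument on the range $|t|\lesssim2^{j(1-\alpha)}$, which is exactly the range relevant to the short-time Strichartz estimates used in the paper (the $TT^*$ argument is run on intervals $I_{t_0,\tau_0}$ of small length).

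\textbf{Main obstacle.} The genuinely delicate point is the summation over $n$ once $|t|$ is so large that the light cone $|x|\sim|t|2^{j(\alpha-1)}$ wraps around the torus: then $\sim(|t|2^{j(\alpha-1)})^d$ lattice translates fall in the region where $|\widetilde K_j^{\pm}|\sim2^{jd(1-\frac\alpha2)}|t|^{-\frac d2}$, and a term-by-term estimate overshoots — consistently with the unitarity lower bound $\sup_z|K_j^{\pm}(t,z)|\gtrsim\|K_j^{\pm}(t,\cdot)\|_{L^2}\sim2^{jd/2}$, which is $\gg2^{jd(1-\frac\alpha2)}|t|^{-\frac d2}$ precisely in that regime. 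Controlling that regime genuinely requires exploiting the oscillation (a Gauss--sum type cancellation, the phases $\lambda\Phi$ at neighbouring translates differing by $\sim2^j$) rather than absolute values, which is why I run the clean argument only up to the no-wrapping threshold. Everything else is a routine application of stationary phase and bookkeeping of the rescaling.
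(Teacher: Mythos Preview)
Your approach is essentially the same as the paper's: Poisson summation to pass from $\T^d$ to $\R^d$, rescaling $\xi=2^j\eta$ to unit frequency, stationary phase with large parameter $\lambda=2^{j\alpha}|t|$, non-stationary phase for the lattice tail, then Riesz--Thorin. The paper's phase $\Phi_{t,z,m}^{\pm}$ is your $\Phi$ with $x=z+m$, and its splitting into $|m|\le 2$ versus $|m|\ge 2$ corresponds to your splitting according to whether $|b|$ is below or above $C_*$.

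You are in one respect more careful than the paper. Your unitarity observation is correct: $\|K_j^{\pm}(t,\cdot)\|_{L^2(\T^d)}\sim 2^{jd/2}$ forces $\sup_z|K_j^{\pm}(t,z)|\gtrsim 2^{jd/2}$, which violates \eqref{kernel} once $|t|\gg 2^{j(1-\alpha)}$; so the lemma as stated for all $t\neq 0$ is literally false, and your restriction to $|t|\lesssim 2^{j(1-\alpha)}$ is the honest statement. The paper's proof glosses over this --- its phase $\Phi_{t,z,m}^{\pm}$ as written is missing the factor $1/t$ in front of the linear term $2^{j(1-\alpha)}(z+m)\cdot\xi$ (compare your $b=2^{j(1-\alpha)}x/|t|$), and it is exactly that missing factor that would spoil the claimed gradient lower bound $|\nabla_\xi\Phi|\gtrsim 1+2^{j(1-\alpha)}|m|$ for large $|t|$. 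The paper is implicitly relying on the remark made just before the proposition, that by finite propagation speed for $\alpha<1$ the torus Strichartz estimate reduces to the $\R^d$ one on bounded time intervals; since $2^{j(1-\alpha)}\ge 1$ for $j\ge 0$, the range you establish already covers $|t|\le 1$, which is all the $TT^*$ argument needs. So your ``main obstacle'' is a genuine limitation of the statement, not of your argument.
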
		
	\begin{proof}
		The kernel $K_j^{\pm}(t,z)$ takes the form
		$$ K_j^{\pm}(t,z)=\sum_{k\in\Z^d}\varphi_j(k)e^{it\sqrt{1+|k|^{2\alpha}}+ik\cdot z}.
		$$
		From the Poisson summation formula, we have
		\begin{align}\label{Poisson} 
			K_j^{\pm}(t,z)=&(2\pi)^d\sum_{m\in\Z^d}\mathcal{F}_{\R^d}^{-1}(\varphi_j(\cdot)e^{\pm it\sqrt{1+|\cdot|^{2\alpha})}}(z+m)\notag \\
			=&2^{jd}\sum_{m\in\Z^d}\kappa_{j,m}^{\pm}(t,z).
		\end{align}
		where
		$$ \kappa_{j,m}^{\pm}(t,z):=\int_{\R^d}\varphi(\xi)e^{\pm it\sqrt{1+|2^j\xi|^{2\alpha}}+i\xi\cdot 2^j(z+m)}d\xi.
		$$
		Consider the phase function
		$$ \Phi_{t,z,m}^{\pm}(\xi):=\pm\sqrt{2^{-2j\alpha}+|\xi|^{2\alpha}}+2^{j(1-\alpha)}(z+m)\cdot\xi,
		$$
		then $\kappa_{j,m}^{\pm}(t,z)=\mathcal{I}_{z,m}(2^{j\alpha}t)$, where
		$$ \mathcal{I}_{z,m}(\lambda t):=\int_{\R^d}\varphi(\xi)e^{i\lambda t \Phi^{\pm}_{t,z,m}(\xi)}d\xi. 
		$$
		Note that
		\begin{align*} &\nabla_{\xi}\Phi_{t,z,m}^{\pm}(\xi)=\pm \alpha|\xi|^{2\alpha-2}\frac{\xi}{\sqrt{2^{-2j\alpha}+|\xi|^{2\alpha} }}+2^{j(1-\alpha)}(z+m)
		\end{align*}
		and on supp$(\varphi)$, 
		$$ |\nabla_{\xi}\Phi_{t,z,m}^{\pm}|\gtrsim 1+2^{j(1-\alpha)}|m|, \;\forall |m|\geq 2.
		$$
		Moreover, on supp$(\varphi)$, $|\det(\nabla_{\xi}^2\Phi_{t,z,m}^{\pm}(\xi))|\gtrsim 1$. By the stationary phase lemma, we have
		$$ |\mathcal{I}_{z,m}(\lambda t)|\lesssim \frac{1}{|\lambda t|^{\frac{d}{2}}},\quad |m|\leq 2
		$$
		and
		$$ |\mathcal{I}_{z,m}(\lambda t)|\lesssim \frac{C_N}{|\lambda t|^N(1+2^{j(1-\alpha)|m|})^N },\quad  |m|\geq 2
		$$
		for all $N\in\N$. 
		Plugging into \eqref{Poisson}, we obtain \eqref{kernel}.
		
		Replacing $e^{\pm it\dD^{\alpha}}\mathbf{P}_j$ by $e^{\pm it\dD^{\alpha}}\widetilde{\mathbf{P}}_j$, where $\widetilde{\mathbf{P}}_j$ is a similar Littlewood-Paley projector such that $\widetilde{\mathbf{P}}_j\mathbf{P}_j=\mathbf{P}_j$, the same kernel estimate holds for $e^{\pm it\dD^{\alpha}}\widetilde{\mathbf{P}}_j$.
		Consequently, we have
		$$ \|e^{\pm it\dD^{\alpha}}\mathbf{P}_jf\|_{L^{\infty}(\T^d)}\lesssim \frac{2^{jd\big(1-\frac{\alpha}{2}\big) }}{|t|^{\frac{d}{2}}}\|\mathbf{P}_jf\|_{L^1(\T^d)}.
		$$
		Note that $e^{\pm it\dD^{\alpha}}$ is an isometry on $L^2(\T^d)$, applying the Riesz-Thorin interpolation theorem, we deduce that for all $2\leq r\leq \infty$,
		$$
		\|e^{\pm it \dD^{\alpha}}\mathbf{P}_jf \|_{L^{r}(\T^d)}\lesssim \frac{2^{jd\big(1-\frac{\alpha}{2}\big)\big(1-\frac{2}{r}\big) }}{|t|^{\frac{d}{2}\big(1-\frac{2}{4}\big) } }\|\mathbf{P}_jf\|_{L^{r'}(\T^d)},
		$$
		and this completes the proof.
	\end{proof}		
	Now we are able to prove Proposition \ref{Strichartz1}. The solution $u$
	\begin{align}\label{linearwave} 
		\partial_t^2u+(\dD^{\alpha})^2u=F,\quad (u,\partial_tu)|_{t=0}=(u_0,u_1)
	\end{align} 
	can be written as
	$$ u(t)=\cos(t\dD^{\alpha})u_0+\frac{\sin(t\dD^{\alpha})}{\dD^{\alpha}}u_1+\int_0^t\frac{\sin((t-t')\dD^{\alpha})}{\dD^{\alpha}}F(t')dt'.
	$$
	It suffices to prove the homogeneous estimate
	\begin{align}\label{homoStrichartz} 
		\|e^{\pm it\dD^{\alpha}}f\|_{L_t^qL_x^r(\R\times\T^d)}\lesssim \|f\|_{H^{\gamma_{q,r}}(\T^d)}
	\end{align}
	and the following inhomogeneous estimate
	\begin{align}\label{inhomoStrichartz} 
		\Big\|\int_{\R}e^{\pm i(t-t')\dD^{\alpha}}G(t')dt' \Big\|_{L_t^qL_x^r(\R\times\T^d)}\lesssim \|G\|_{L_t^1H^{\gamma_{q,r}}(\R\times\T^d)},
	\end{align}
	thanks to the Christ-Kiselev Lemma (\cite{CK}).
	
	We perform a standard $TT^*$ argument. 	Fix a sharp admissible pair $(q,r)$, i.e.
	$$ \frac{2}{q}=d\big(\frac{1}{2}-\frac{1}{r}\big),\quad (q,r,d)\neq (2,\infty,2),
	$$ define
	\begin{align}
		&\mathcal{T}_j: L_x^2\rightarrow L_t^qL_x^r, \quad \mathcal{T}_j(f):=e^{\pm it\langle\nabla \rangle^{\alpha}}\mathbf{P}_jf,\\ &\mathcal{T}_j^*: L_t^{q'}L_x^{r'}\rightarrow L_x^2,\quad  \mathcal{T}_j^*G:=\int_{\R}e^{\mp it\langle\nabla\rangle^{\alpha}}\mathbf{P}_jG(t)dt.
	\end{align}
	Using \eqref{dispersive} and the Hardy-Littlewood-Sobolev inequality, we deduce that
	\begin{align*}
		\Big\|\int_{\R}e^{\pm i(t-t')\dD^{\alpha} }\mathbf{P}_jG(t')dt'\Big\|_{L_t^{q}L_x^r}\lesssim 2^{jd\big(1-\frac{\alpha}{2}\big)\big(1-\frac{2}{r}\big) }\|\mathbf{P}_jG\|_{L_t^{q'}L_x^{r'}},
	\end{align*}
	Since
	$$ \|\mathcal{T}_j\|_{L_x^2\rightarrow L_t^qL_x^r}=\|\mathcal{T}_j^*\|_{L_t^{q'}L_x^{r'}\rightarrow L^2}=\|\mathcal{T}_j\mathcal{T}_j^*\|_{L_t^{q'}L_x^{r'}\rightarrow L_t^qL_x^r}^{1/2},
	$$		
	we deduce further that for any admissible pairs $(q_1,r_1),(q,r)$,
	$$  \|\mathcal{T}_j\|_{L_x^2\rightarrow L_t^qL_x^r}\lesssim 2^{jd\big(1-\frac{\alpha}{2}\big)\big(\frac{1}{2}-\frac{1}{r}\big)}.
	$$
	Therefore, for any admissible pairs $(q_1,r_1),(q,r)$,
	$$ \|\mathcal{T}_j\mathcal{T}_j^*\|_{L_t^{q_1'}L_x^{r_1'}\rightarrow L_t^qL_x^r }\lesssim 2^{jd\big(1-\frac{\alpha}{2}\big)\big(1-\frac{1}{r}-\frac{1}{r_1}\big) }.
	$$
	In particular, for $q_1=\infty, r_1=2$, we have
	$$ \|e^{\pm it\dD^{\alpha}}\mathbf{P}_jf\|_{L_t^qL_x^r}+\Big\|\int_{\R}e^{\pm i(t-t')\dD^{\alpha}}\mathbf{P}_jG(t')dt'\Big\|_{L_t^qL_x^r}\lesssim 2^{j\gamma_{q,r}}\|\mathbf{P}_jf\|_{L_x^2}+2^{j\gamma_{q,r}}\|\mathbf{P}_jG\|_{L_t^1L_x^2}.
	$$
	Taking the $l^2$ norm in $j$, we obtain that
	$$  \|e^{\pm it\dD^{\alpha}}\mathbf{P}_jf\|_{l_j^2L_t^qL_x^r}+\Big\|\int_{\R}e^{\pm i(t-t')\dD^{\alpha}}\mathbf{P}_jG(t')dt'\Big\|_{l_j^2L_t^qL_x^r}\lesssim \|f\|_{H_x^{\gamma_{q,r}}}+\|G\|_{L_t^1H_x^{\gamma_{q,r}}}
	$$
	Since $2\leq r<\infty, q\geq 2$, by the Minkowski inequality and the Littlewood-Paley square function theorem, 
	$$ \|F\|_{L_t^qL_x^r}\sim \|\mathbf{P}_jF\|_{L_t^qL_x^rl_j^2}\leq \|F\|_{l_j^2L_t^qL_x^r},
	$$ 			
	thus we have proved \eqref{homoStrichartz} and \eqref{inhomoStrichartz}. This completes the proof of Proposition \ref{Strichartz1}.

	\section{Convergence of the linear coefficient} \label{appendix:1orderconstant}
	In this section, we prove Proposition \ref{convergeneclinear}. 
	Recall that $\widetilde{\sigma}_N^2=N^{2(1-\alpha)}\sigma_N^2$. 
			$$ \widetilde{\sigma}_N^2-N^{2(1-\alpha)}\sigma^2=\frac{1}{4\pi^2}\sum_{|k|\leq N}\frac{1}{1+|k|^{2\alpha}}-\frac{1}{4\pi^2}\int_{|\xi|\leq N}\frac{1}{|\xi|^{2\alpha}}d\xi.
			$$
			In order to prove the convergence of $N^{2(1-\alpha)}(\ov{a}_{1,N}-\ov{a}_1)$, the key is to show that:
			\begin{lem}\label{asymptotic} 
				Assume that $\alpha\in(\frac{1}{2},1),$ Then
				$$ \sigma_N^2=\sigma^2+\ov{b}_1N^{-2(1-\alpha)}+O(N^{-1}).
				$$
				  where
				 $$\ov{b}_1=\frac{1}{4\pi^2}+\frac{1}{4\pi^2}\sum_{k\in\Z^2}\int_{C_k}\Big(\mathbf{1}_{k\neq 0}\frac{1}{1+|k|^{2\alpha}}-\frac{1}{|\xi|^{2\alpha}}\Big)d\xi,
				$$
				where $(C_k)_{k\in\Z^2}$ are unit cubes $[k^{(1)},k^{(1)}+1]\times[k^{(2)},k^{(2)}+1]$. 
			\end{lem}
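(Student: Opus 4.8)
The plan is to compare the Riemann-type sum defining $\sigma_N^2$ with its continuous counterpart cube by cube. First, by the change of variables $\xi=N\zeta$,
\[
  \int_{|\xi|\le N}\frac{{\rm d}\xi}{|\xi|^{2\alpha}}=N^{2(1-\alpha)}\int_{|\zeta|\le 1}\frac{{\rm d}\zeta}{|\zeta|^{2\alpha}}=4\pi^2 N^{2(1-\alpha)}\sigma^2\;,
\]
so since $\sigma_N^2=\frac{1}{4\pi^2N^{2(1-\alpha)}}\sum_{|k|\le N}\frac{1}{1+|k|^{2\alpha}}$, it suffices to prove
\[
  D_N:=\sum_{|k|\le N}\frac{1}{1+|k|^{2\alpha}}-\int_{|\xi|\le N}\frac{{\rm d}\xi}{|\xi|^{2\alpha}}=4\pi^2\,\ov{b}_1+O(N^{1-2\alpha})\;,
\]
because then $\sigma_N^2-\sigma^2=\frac{D_N}{4\pi^2N^{2(1-\alpha)}}=\ov{b}_1N^{-2(1-\alpha)}+O(N^{-1})$, using $N^{1-2\alpha}/N^{2(1-\alpha)}=N^{-1}$.

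Next, writing $\frac{1}{1+|k|^{2\alpha}}=\int_{C_k}\frac{{\rm d}\xi}{1+|k|^{2\alpha}}$ (each $C_k$ has unit measure) and $\int_{|\xi|\le N}\frac{{\rm d}\xi}{|\xi|^{2\alpha}}=\sum_{k\in\Z^2}\int_{C_k}\mathbf{1}_{|\xi|\le N}\frac{{\rm d}\xi}{|\xi|^{2\alpha}}$, I would express
\[
  D_N=\sum_{k\in\Z^2}\int_{C_k}\Big(\mathbf{1}_{|k|\le N}\frac{1}{1+|k|^{2\alpha}}-\mathbf{1}_{|\xi|\le N}\frac{1}{|\xi|^{2\alpha}}\Big){\rm d}\xi\;.
\]
The key pointwise estimate is that for $k\ne 0$ and $\xi\in C_k$, the mean value theorem applied to $t\mapsto t^{2\alpha}$ gives $\big||\xi|^{2\alpha}-|k|^{2\alpha}\big|\lesssim |k|^{2\alpha-1}$, whence
\[
  \Big|\frac{1}{1+|k|^{2\alpha}}-\frac{1}{|\xi|^{2\alpha}}\Big|=\frac{\big||\xi|^{2\alpha}-1-|k|^{2\alpha}\big|}{(1+|k|^{2\alpha})|\xi|^{2\alpha}}\lesssim \frac{1}{|k|^{1+2\alpha}}\;,
\]
where $2\alpha-1>0$ is used to absorb the constant $1$. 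Since $\alpha>\frac12$ the series $\sum_k|k|^{-1-2\alpha}$ converges, and for $N\ge 2$ the cube $C_0$ lies inside $\{|\xi|\le N\}$ with $\int_{C_0}|\xi|^{-2\alpha}\,{\rm d}\xi<\infty$ thanks to $\alpha<1$; a direct rearrangement then identifies $4\pi^2\ov{b}_1$ with the formal $N=\infty$ value of $D_N$, namely $1-\int_{C_0}|\xi|^{-2\alpha}\,{\rm d}\xi+\sum_{k\ne 0}\int_{C_k}\big(\frac{1}{1+|k|^{2\alpha}}-\frac{1}{|\xi|^{2\alpha}}\big)\,{\rm d}\xi$.

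It remains to control $D_N$ minus this limit. I would split $\Z^2$ into the cubes with $|k|\le N-2$, for which $\mathbf{1}_{|\xi|\le N}\equiv 1$ on $C_k$ and $\mathbf{1}_{|k|\le N}=1$ (so the cube-$k$ term is exactly the $N=\infty$ summand); the cubes with $|k|\ge N+2$, for which both indicators vanish on $C_k$ and the term is $0$; and the boundary annulus $\{N-2<|k|<N+2\}$, containing only $O(N)$ lattice points. The tail $\sum_{|k|>N-2}\int_{C_k}\big|\frac{1}{1+|k|^{2\alpha}}-\frac{1}{|\xi|^{2\alpha}}\big|{\rm d}\xi\lesssim \sum_{|k|\gtrsim N}|k|^{-1-2\alpha}\lesssim N^{1-2\alpha}$ by the displayed bound; on each boundary cube one has $|k|\sim|\xi|\sim N$, so each term is $O(N^{-2\alpha})$ and their total is $O(N)\cdot O(N^{-2\alpha})=O(N^{1-2\alpha})$. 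Collecting everything yields $D_N=4\pi^2\ov{b}_1+O(N^{1-2\alpha})$ and hence the lemma. The main obstacle is the bookkeeping of the boundary annulus: one must check that every cube outside $\{N-2<|k|<N+2\}$ is either entirely inside or entirely outside $\{|\xi|\le N\}$ with the discrete indicator $\mathbf{1}_{|k|\le N}$ matching, so that the only genuinely error-producing terms sit in a layer of width $O(1)$ about the circle of radius $N$.
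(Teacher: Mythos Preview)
Your argument is correct and is more direct than the paper's. The paper decomposes $\Z^2$ into the four closed quadrants $\Lambda_1,\dots,\Lambda_4$, compares the sum over $\Lambda_1$ with the integral over the union $\bigcup_{k\in\Lambda_1}C_k$ (where the orientation of the cubes guarantees $|\xi|\ge|k|$), invokes symmetry for the remaining quadrants, and then performs an inclusion--exclusion to remove the axis lattice points that were counted twice. You bypass this bookkeeping entirely by inserting the indicators $\mathbf 1_{|k|\le N}$ and $\mathbf 1_{|\xi|\le N}$ and comparing cube by cube; the mean-value bound $\bigl|\tfrac{1}{1+|k|^{2\alpha}}-\tfrac{1}{|\xi|^{2\alpha}}\bigr|\lesssim |k|^{-1-2\alpha}$ simultaneously gives the absolute convergence of the series defining $\ov b_1$ and the $O(N^{1-2\alpha})$ tail, while the boundary annulus is handled by the crude $O(N)\cdot O(N^{-2\alpha})$ count. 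The paper's quadrant decomposition is precisely what your indicator calculus replaces.

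One small point to tidy: your pointwise bound is stated for all $k\ne 0$, but the cubes $C_{(-1,0)}$, $C_{(0,-1)}$, $C_{(-1,-1)}$ contain the origin, so $|\xi|^{-2\alpha}$ is unbounded there and the inequality fails as written. This is harmless --- these are three fixed cubes with finite contribution $\int_{C_k}|\xi|^{-2\alpha}\,{\rm d}\xi<\infty$ (since $\alpha<1$), and your tail estimate only uses $|k|>N-2$, which excludes them for large $N$ --- but you should either restrict the pointwise claim to $|k|\ge 2$ or phrase it as a bound on the integral over $C_k$.
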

			
			\begin{proof}
			We denote 
				\begin{align*}
					N^{2(1-\alpha)}\big(\sigma_N^2-\sigma^2\big)=\frac{1}{4\pi^2}\big(1+\mathrm{I}_N\big),
				\end{align*}
				where
				\begin{align}\label{I} 
					\mathrm{I}_N:=\sum_{0<|k|\leq N}\frac{1}{1+|k|^{2\alpha}}-\int_{|\xi|\leq N}\frac{1}{|\xi|^{2\alpha}}d\xi.
				\end{align}
				We decompose
				\begin{align*}
					Z_N:=\mathbb{Z}^2\cap \{k:0<|k|\leq N \}=\bigcup_{j=1}^8\Lambda_j, \quad B_N:=\{\xi:|\xi|\leq N \}=\bigcup_{j=1}^4\ov{U}_j
				\end{align*}
				where
				\begin{align*}
					&\Lambda_1:=\{k=(k^{(1)},k^{(2)})\in Z_N: k^{(1)}\geq 0,k^{(2)}\geq 0 \},\quad U_1:=\{\xi=(\xi^{(1)},\xi^{(2)})\in B_N, \xi^{(1)}>0,\xi^{(2)}>0  \}, \\
					&\Lambda_2:=\{k=(k^{(1)},k^{(2)})\in Z_N: k^{(1)}\leq 0,k^{(2)}\geq 0 \},\quad
					U_2:=\{\xi=(\xi^{(1)},\xi^{(2)})\in B_N: \xi^{(1)}<0,\xi^{(2)}>0 \},
					\\
					&\Lambda_3:=\{k=(k^{(1)},k^{(2)})\in Z_N: k^{(1)}\leq 0,k^{(2)}\leq 0 \},
					\quad
					U_3:=\{\xi=(\xi^{(1)},\xi^{(2)})\in B_N: \xi^{(1)}<0,\xi^{(2)}<0 \},
					\\
					&\Lambda_4:=\{k=(k^{(1)},k^{(2)})\in Z_N: k^{(1)}\geq 0,k^{(2)}\leq 0 \},\quad
					U_4:=\{\xi=(\xi^{(1)},\xi^{(2)})\in B_N: \xi^{(1)}>0,\xi^{(2)}<0 \}.
				\end{align*}
				For $j=1,2,3,4$, we define
				$$ \mathrm{I}_{N,j}:=\sum_{\Lambda_j}\frac{1}{1+|k|^{2\alpha}}-\int_{U_j}\frac{1}{|\xi|^{2\alpha}}d\xi.
				$$
				Then by inclusion and exclusion,
				\begin{align}\label{IN} 
				\mathrm{I}_N=\sum_{j=1}^4\mathrm{I}_{N,j}-\sum_{\substack{k^{(1)}k^{(2)}=0\\
				0<|k|\leq N } }\frac{1}{1+|k|^{2\alpha}}.
				\end{align}
				By symmetry, it suffices to derive a formula for $\mathrm{I}_{N,1}$. 
				Fix $k=(k^{(1)},k^{(2)})\in\Lambda_1$, we denote 
				$$ C_k:=\{\xi=(\xi^{(1)},\xi^{(2)}): k^{(j)}\leq \xi^{(j)}\leq k^{(j)}+1, j=1,2 \},$$
				and
				$$ C_{0}^{(1)}:=\{k=(k^{(1)},k^{(2)}):0\leq k^{(1)},k^{(2)}\leq 1  \}
				$$
				the cubic with bottom left vertex $k$ and top right vertex $\theta(k):=(k^{(1)}+1, k^{(2)}+1)$. 
				We have
				\begin{align*}
					\int_{U_1}\frac{d\xi}{|\xi|^{2\alpha}}=\int_{C_0^{(1)}}\frac{d\xi}{|\xi|^{2\alpha}}+\sum_{k\in \Lambda_1}\int_{C_k\cap B_N}\frac{d\xi}{|\xi|^{2\alpha}}.
				\end{align*}
				Let
				$$ \widetilde{U}_{1}:=U_1\cup \bigcup_{k\in\Lambda_1}C_k
				$$
				Since the number of cubes $C_k$ intersecting with $|k|=N$ is $O(N)$, we have 
				$$ \int_{\widetilde{U}_1\setminus U_1}\frac{d\xi}{|\xi|^{2\alpha}}=O(N^{-(2\alpha-1)}).
				$$
				Thus
				\begin{align*}
					\mathrm{I}_{N,1}:=-\int_{C_0^{(1)}}\frac{d\xi}{|\xi|^{2\alpha}}+\sum_{k\in\Lambda_1}\int_{C_k}\Big(\frac{1}{1+|k|^{2\alpha}}-\frac{1}{|\xi|^{2\alpha}}\Big)d\xi+O(N^{-(2\alpha-1)}).
				\end{align*}
				We have similar formulas for $\mathrm{I}_{N,2},\mathrm{I}_{N,3},\mathrm{I}_{N,3}$. Adding them together and noticing that the lattices on two axes have been added twice, we have
				\begin{align*}
					\sum_{j=1}^4\mathrm{I}_{N,j}=&-\int_{C_0^{(1)}\cup C_0^{(2)}\cup C_0^{(3)}\cup C_0^{(4)}}\frac{d\xi}{|\xi|^{2\alpha}}+\sum_{k\in Z_N}\int_{C_k}\Big(\frac{1}{1+|k|^{2\alpha}}-\frac{1}{|\xi|^{2\alpha}}\Big)d\xi+\sum_{\substack{k^{(1)}k^{(2)}=0\\
				0<|k|\leq N	
				 } }\frac{1}{1+|k|^{2\alpha}}\\+&O(N^{-(2\alpha-1)}).
				\end{align*}
				Since $\alpha>\frac{1}{2}$, we have
				$$ \sum_{k\in Z_N}\int_{C_k}\Big(\frac{1}{1+|k|^{2\alpha}}-\frac{1}{|\xi|^{2\alpha}}\Big)d\xi=\sum_{k\neq 0}\int_{C_k}\Big(\frac{1}{1+|k|^{2\alpha}}-\frac{1}{|\xi|^{2\alpha}}\Big)d\xi+O(N^{1-2\alpha}), 
				$$
				and
				$$ \sum_{\substack{k^{(1)}k^{(2)}=0\\
						0<|k|\leq N	
				} }\frac{1}{1+|k|^{2\alpha}}=2\sum_{m\neq 0,m\in\N}\frac{1}{1+|m|^{2\alpha}}+O(N^{1-2\alpha}).
				$$
			We have 
				\begin{align*}
				\sum_{j=1}^4\mathrm{I}_{N,j}=\sum_{k\in\Z^2}\int_{C_k}\Big(\mathbf{1}_{k\neq 0}\frac{1}{1+|k|^{2\alpha}}-\frac{1}{|\xi|^{2\alpha}}\Big)d\xi+2\sum_{0\neq m\in\Z}\frac{1}{1+|m|^{2\alpha}}+O(N^{1-2\alpha}).
				\end{align*}
				Therefore,
				\begin{align*}
					\mathrm{I}_N=\sum_{k\in\Z^2}\int_{C_k}\Big(\mathbf{1}_{k\neq 0}\frac{1}{1+|k|^{2\alpha}}-\frac{1}{|\xi|^{2\alpha}}\Big)d\xi+O(N^{1-2\alpha}).
				\end{align*}
				This completes the proof.
			\end{proof}	
\begin{proof}[Proof of Proposition \ref{convergeneclinear}]
		By definition,
			\begin{align*}
				\ov{a}_{1,N}-\ov{a}_1=\frac{1}{2}\int_{-\infty}^{\infty}V''(z)\Big(\frac{1}{\sqrt{2\pi}\sigma_N}e^{-\frac{z^2}{2\sigma_N}}-\frac{1}{\sqrt{2\pi}\sigma}e^{-\frac{z^2}{2\sigma}}\Big)dz.
			\end{align*}
			By Lemma \ref{asymptotic} and the fact that $\alpha>\frac{3}{4}$, we get
			$$ N^{2(1-\alpha)}(\sigma_N-\sigma)=\frac{\ov{b}_1}{2\sigma}+\eps_N,$$
			where
			$$ 
			 \eps_N=O(N^{-(2\alpha-1)})+O(N^{-2(1-\alpha)}).
			$$
			 we finally obtain that
			\begin{align}\label{limita1N} 
			 N^{2(1-\alpha)}(\ov{a}_{1,N}-\ov{a}_1)=\frac{\ov{b}_1}{4\sigma}\int_{-\infty}^{\infty}\partial_{\sigma}\Big(\frac{1}{\sqrt{2\pi}\sigma}e^{-\frac{z^2}{2\sigma}}\Big)V''(z)dz+\eps_N.
			\end{align}
		This completes the proof of Proposition \ref{convergeneclinear}.
\end{proof}



\begin{thebibliography}{10}
		
		
		\bibitem{variational_QFT} N. Barashkov and M. Gubinelli, A variational method for $\Phi^4_3$, preprint, arXiv:1805.10814, 2018. 
		
		\bibitem{variational_BP_formula} M. Bou\'e and P. Dupuis, A variational representation for certain functionals of Brownian motion. \textit{Ann. Probab.}, 26(4), 1641--1659, 1998. 
		
		\bibitem{Gagliardo_Nirenberg_general} H. Brezis and P. Mironescu, Gagliardo-Nirenberg inequalities and non-inequalities: the full story. \textit{Ann. Inst. H. Poincar\'e Non.Lin\'eaire.}, 35(5), 1355--1375, 2018. 
		
		\bibitem{Bo}
		J. Bourgain, Invariant measures for the 2d-defocusing nonlinear Schr\"odinger equation, Comm. Math.
		Phys., 176 (1996) 421–445.
		
		\bibitem{BB} 
		J. Bourgain, A. Bulut, Invariant Gibbs measure evolution for the radial nonlinear wave equation on the 3d ball, \textit{J. Funct. Anal.}, 266 (2014) 2319–2340.
		
		\bibitem{Bringmann1} 
		B. Bringmann,  Almost sure local well-posedness for a derivative nonlinear wave equation, \textit{Int. Math. Res. Not.}  2021, no. 11,  p. 8657–8697. 
		
		
		
		
		
		
		
		
		
		\bibitem{BTz}
		N. Burq, N. Tzvetkov,  Probabilistic well-posedness for the cubic wave equation, \textit{J. Eur. Math. Soc.}, 16, 1–30, 2014. 
		
		\bibitem{CK} 
		M. Christ, A. Kiselev, {\it Maximal operators associated to filtrations}, J. Funct. Anal. 179
		(2001), 409–425.

		\bibitem{2d_NLS_full}  Y. Deng, A. Nahmod, H. Yue, Invariant Gibbs measures and global strong solutions for nonlinear Schr\"odinger equations in dimension two, arXiv preprint, 2019. 
		
		\bibitem{random_tensors} Y. Deng, A. Nahmod, H. Yue, Random tensors, propogation of randomness, and nonlinear dispersive equations, \textit{Inventiones mathematicae} Volume 228, pages 539–-686 (2022).
		
		\bibitem{DD}  D.-V.~Duong, Strichartz estimates for the fractional Schr\"odinger and wave equations on compact manifolds without boundary, \textit{J. Diff. Equations}, 263, 8804--8837, 2017. 
		
		\bibitem{phi4_general_smoothing}  D. Erhard, W. Xu, Weak universality of $\Phi^4_3$: polynomial potential and general smoothing mechanism, arXiv preprint, 2020. 
		
		\bibitem{phi4_general_nonlinear}  M. Furlan, M. Gubinelli, Weak universality for a class of 3D stochastic reaction–diffusion models, \textit{Probab. Theory Related Fields}, 173, 1099–1164, 2019.
		
		\bibitem{GKOh} 
		M.~Gubinelli, H.~Koch, T.~Oh,  Renormalisation of the two-dimensional stochastic nonlinear wave equations. Trans. Amer. Math. Soc. 370 (2018), no. 10, 7335–7359.
		
		\bibitem{GKOh2} 
		M.~Gubinelli, H.~Koch, T.~Oh,  Paracontrolled approach to the three-dimensional stochastic nonlinear wave equation with quadratic nonlinearity. arXiv:1811.07808.
		
		\bibitem{fractional_Leibniz} A.Gulisashvili and M.A.Kon. Exact smoothing properties of Schr\"odinger semigroups. \textit{Amer.J.Math.}, 118(6), 1215--1248, 1996. 
		
		\bibitem{quasi_inv_wave_3d} T. Gunaratnam, T. Oh, N. Tzvetkov and H. Weber, Quasi-invariant Gaussian measures for the nonlinear wave equation in three dimensions, preprint, arXiv:1808.03158, 2018. 
		
		\bibitem{KPZ_HQ}
		M.~Hairer, J.~Quastel, A class of growth models rescaling to KPZ, \textit{Forum Math. Pi} 6 (2018), e3, 112 pp.
		
		\bibitem{phi4_poly} M.~Hairer, W.~Xu, Large-scale behavior of three-dimensional continuous phase coexistence models,  \textit{Comm. Pure Appl. Math.} 71 (2018), no. 4, 688–746.
		
		\bibitem{KPZ_general_nonlinear} M.~Hairer, W.~Xu, Large-scale limit of interface fluctuation models, \textit{Ann. Probab.} 47 (6), 3478-3550, 2019. 
		
		\bibitem{phi43_pedestrians} J.-C.~Mourrat, H.~Weber, W.~Xu, Construction of $\Phi^4_3$ diagrams for pedestrians. \textit{From particle systems to partial differential equations}, Vol.209 of \textit{Springer Proc. Math. Stat.}, pages 1--46. 2017. 
		
		
		
		
		
		
		\bibitem{OPTz} 
		T.~Oh, O.~Pocovnicu, N.~Tzvetkov, Probabilistic local Cauchy theory of the cubic nonlinear wave equation in negative Sobolev spaces, to appear in Ann. Inst. Fourier (Grenoble), arXiv:1904.06792.
		
		\bibitem{OhTho} 
		T.~Oh, L.~Thomann, Invariant Gibbs measures for the 2-d defocusing nonlinear wave equations. Ann. Fac. Sci. Toulouse Math. (6) 29 (2020), no. 1, 1–26.
		
		\bibitem{phi4_non_Gaussian} H. Shen, W. Xu, Weak universality of dynamical $\Phi^4_3$: non-Gaussian noise, Stoch. PDE Anal. Comp., 6 (2018), 211--254. 
				
		\bibitem{STz} 
		C.~Sun, N.~Tzvetkov, Gibbs measure dynamics of the fractional NLS, \textit{SIAM J. Math. Anal.}, 52(5), (2020) 4638–4704.
		
		
		
		
		
		
		\bibitem{variational_Ustunel} A. S. \"Ust\"unel. Variational calculation of Laplace transforms via entropy on Wiener space and some applications, \textit{J. Funct. Anal.}, 268(8), 3058--3083, 2014. 
		
		
		
	\end{thebibliography}
\end{document}